\def\IE{{\mathbb E}}
\def\IP{{\mathbb P}}
\def\IR{{\mathbb R}}
\def\IL{{\mathbb L}}
\def\IZ{{\mathbb Z}}
\def\n{\noindent}
\def\dsl{\textstyle\sum\limits}
\def\dis{\displaystyle}
\def\o{\omega}
\def\fr{\mbox{\footnotesize $\dis\frac{1}{2}$}}
\def\ov{\overline}
\def\ve{\varepsilon}
\def\f{\footnotesize}
\def\r{\rightarrow}
\def\wh{\widehat}
\def\wt{\widetilde}
\def\cA{{\cal A}}
\def\cB{{\cal B}}
\def\cD{{\cal D}}
\def\cE{{\cal E}}
\def\cL{{\cal L}}
\def\cE{{\cal E}}
\def\cM{{\cal M}}
\def\cF{{\cal F}}
\def\cV{{\cal V}}
\def\cW{{\cal W}}
\newtheorem{theorem}{Theorem}[section]
\newtheorem{lemma}[theorem]{Lemma}
\newtheorem{corollary}[theorem]{Corollary}
\newtheorem{proposition}[theorem]{Proposition}
\newtheorem{remark}[theorem]{Remark}
\begin{document}

\noindent

~

\bigskip
\begin{center}
{\bf LARGE DEVIATIONS FOR OCCUPATION TIME PROFILES  \linebreak OF RANDOM INTERLACEMENTS}
\end{center}

\begin{center}
Xinyi Li and Alain-Sol Sznitman
\end{center}

\bigskip\bigskip
\begin{center}
\end{center}

\begin{abstract}
We derive a large deviation principle for the density profile of occupation times of random interlacements at a fixed level in a large box of $\IZ^d$, $d \ge 3$. As an application, we analyze the asymptotic behavior of the probability that atypically high values of the density profile insulate a macroscopic body in a large box. As a step in this program, we obtain a similar large deviation principle for the occupation-time measure of Brownian interlacements at a fixed level in a large box of $\IR^d$, and we derive a new identity for the Laplace transform of the occupation-time measure, which is based on the analysis of certain Schr\"odinger semi-groups.
\end{abstract}

\vfill 

\n
Departement Mathematik 
ETH Z\"urich\\
CH-8092 Z\"urich\\
Switzerland

\vfill

\n
$\overline{~~~~~~~~}$
\\
{\footnotesize{This research was supported in part by the grant ERC-2009-AdG 245728-RWPERCRI}}
~
\newpage
\thispagestyle{empty}
~

\newpage
\setcounter{page}{1}

 \setcounter{section}{-1}
 
 \section{Introduction}
 \setcounter{equation}{0}
 
Random interlacements have been instrumental in the analysis of various questions concerning the disconnection or the fragmentation created by random walk trajectories, see \cite{CernTeixWind11},  \cite{Szni09b}, \cite{TeixWind11}. The existence of a non-trivial phase transition for the percolative properties of the vacant set of random interlacements, when one increases the level $u$ of the interlacement, plays an important role in their analysis. As it turns out, the level $u$ of random interlacements can also be measured by means of the random field of occupation times, which, in the case of $\IZ^d$, $d \ge 3$, is stationary, ergodic, and has average value $u$. In this work, we study the large deviations of the density profile of this random field in a large box of $\IZ^d$. As an application of the general large deviation principle we obtain, we analyze the asymptotic behavior of the probability that atypically high values of the density profile insulate a macroscopic body in a large box. One may naturally wonder whether such type of large deviations of the occupation-time profile actually captures the main mechanism for an atypical disconnection of a macroscopic body from infinity by the random interlacements, when the vacant set is in a percolative regime. In the course of our program, we derive a similar large deviation principle for the occupation-time measure of Brownian interlacements at a fixed level, in a large box of $\IR^d$, $d \ge 3$. The scaling invariance of Brownian interlacements permits to recast this problem in terms of general Cram\'er theory, and our results rely on a new identity for the Laplace transform of the occupation-time measure, which is based on the analysis of Schr\"odinger semi-groups.
 
 \medskip
 We now discuss our results in more detail. We consider continuous time random interlacements on $\IZ^d$, $d \ge 3$. In essence, this is a Poisson point process on a certain state space consisting of doubly infinite $\IZ^d$-valued trajectories marked by their duration at each step, modulo time-shift. A non-negative parameter $u$ comes as a multiplicative factor of the intensity measure of this Poisson point process, which is defined on a certain canonical space, see \cite{Szni11c}, which we denote here by $(\ov{\Omega}, \ov{\cA}, \ov{\IP})$. The field of occupation times of random interlacements at level $u$ is denoted by $L_{x,u}(\o)$, for $x \in \IZ^d$, $u \ge 0$, $\o \in \ov{\Omega}$. It records the total duration spent at $x$ by the trajectories modulo time-shift with label at most $u$ in the cloud $\o$, see \cite{Szni11c}.

\medskip
Given a {\it closed box} $B \subseteq \IR^d$ (by this we mean that $B$ is the product of $d$ non-degenerate compact intervals in $\IR$), a central object of interest in this work is the density profile of the occupation times at level $u$ in the large discrete box $(NB) \cap \IZ^d$: 
\begin{equation}\label{0.1}
\rho_{N,u} =  \dis\frac{1}{N^d} \;\dsl_{x \in (NB) \cap \IZ^d} L_{x,u} \;\delta_{\frac{x}{N}}, \;\; \mbox{for} \;\; N \ge 1, u \ge 0.
\end{equation}

\n
We view $\rho_{N,u}$ as a random element of $M_+(B)$, the set of positive measures on $B$, which we endow with the weak topology generated by $C(B)$, the set of continuous functions on $B$, and with its corresponding Borel $\sigma$-algebra. As a consequence of the ergodic theorem, see (\ref{4.4}), $\ov{\IP}$-a.s., $\rho_{N,u}$ converges weakly to $u \, m_B$, where $m_B$ stands for the restriction of the Lebesgue measure to Borel subsets of $B$. In Theorem \ref{theo5.8}, we establish a large deviation principle for $\rho_{N,u}$, which shows that for $u > 0$,
\begin{equation}\label{0.2}
\begin{array}{l}
\mbox{as $N \r \infty$, the laws of $\rho_{N,u}$ on $M_+(B)$ satisfy a large deviation principle}
\\
\mbox{at speed $N^{d-2}$ with the convex, good rate function $\frac{1}{d} \; I_u$},
\end{array}
\end{equation}

\n
where for $v >0$ and $\mu \in M_+(B)$, we have defined
\begin{equation}\label{0.3}
I_v(\mu)  =\left\{ \begin{split}
+ \infty, \;\;\; & \;\mbox{if $\mu$ is not absolutely continuous with respect to $m_B$},
\\
\inf \Big\{ \fr  & \dis\int_{\IR^d} |\nabla \varphi|^2 dy; \;\varphi \in H^1(\IR^d), \;\varphi = \mbox{\f $\sqrt{\dis\frac{d \mu}{dm_B}}$} - \sqrt{v}, \;\; \mbox{a.e.\,on} \; B\big\},
\\[0.5ex]
& \;\mbox{if $\mu$ is absolutely continuous with respect to $m_B$ (and the}
\\[-0.5ex]
&\; \mbox{infimum of the empty set equals $+ \infty$)}.
\end{split}\right.
\end{equation}

\medskip\n
In other words, $I_v$ is a non-negative, convex, lower semi-continuous function, with compact level sets $\{I_v \le a\}$, for $a \ge 0$, and for any open subset $O$ and closed subset $C$ of $M_+(B)$, we have

\vspace{-5ex}
\begin{equation}\label{0.4}
\begin{array}{lllll}
{\rm i)} &\quad  \limsup\limits_{N} & \dis\frac{1}{N^{d-2}} \; \log \ov{\IP} [\rho_{N,u} \in C] & \le & - \inf\limits_C \;\mbox{\f $\dis\frac{1}{d}$} \;I_u, \;\mbox{and} \qquad \qquad
\\[2ex]
{\rm ii)} &\quad  \liminf\limits_{N} & \dis\frac{1}{N^{d-2}} \; \log \ov{\IP} [\rho_{N,u} \in O] & \ge & - \inf\limits_O \;\mbox{\f $\dis\frac{1}{d}$} \;I_u\,.
\end{array}
\end{equation}
 
\medskip\n
 In the course of proving Theorem \ref{theo5.8}, we derive a large deviation principle for the density profile of the occupation-time measure of Brownian interlacements at level $\alpha > 0$ in a large box $LB$ of $\IR^d$, $d \ge 3$, which is of independent interest. Letting $\cL_\alpha$ stand for the random Radon measure of occupation times in $\IR^d$ of Brownian interlacements at level $\alpha$, see (\ref{1.23}) (or Section 2 of \cite{Szni13a}) we consider in place of $\rho_{N,u}$ the random measure $\nu_{L,\alpha}$ on $B$, such that for any bounded measurable $f$ on $B$, the integral $\langle \nu_{L,\alpha},f\rangle$ equals
 \begin{equation}\label{0.5}
 \big\langle \nu_{L,\alpha},f\big\rangle = \dis\frac{1}{L^d} \;\dis\int_{LB} f\Big(\mbox{\f $\dis\frac{y}{L}$}\Big) \, \cL_\alpha (dy), \; \mbox{for} \; L \ge 1, \alpha \ge 0.
 \end{equation}
 We show in Corollary \ref{cor3.3} that for $\alpha > 0$,
 \begin{equation}\label{0.6}
\begin{array}{lllll}
\mbox{as $L \r \infty$, the laws of $\nu_{L,\alpha}$ on $M_+(B)$ satisfy a large deviation principle},
\\
\mbox{at speed $L^{d-2}$ with the convex good rate function $I_\alpha$ (see (\ref{0.3})}).
\end{array}
\end{equation}

\n
As an application of the large deviation principle (\ref{0.2}), we analyze the asymptotic behavior of the probability that high values of the (smoothed-out) density profile insulate a large macroscopic body. We consider a regularization $f_N$ of $\rho_{N,u}$ obtained by the convolution of $\rho_{N,u}$ with a continuous probability density $\varphi_\delta$ supported in the closed Euclidean ball of radius $\delta$ centered at the origin of $\IR^d$. Given a compact subset $K$ of a sub-box $B_0$ of $B$ (at distance at least $\delta$ from $B^c$), and a positive number $a$, we are interested in the event $\{\rho_{N,u} \in \cD_{a,\delta}\}$ (see (\ref{6.5}) for the precise definition), where the level set $\{f_N \ge a\}$ ``disconnects $K$ from $\partial B_0$''. We show in Theorems \ref{theo6.2} and \ref{theo6.4} that when $a > u$,
\begin{equation}\label{0.7}
\begin{array}{lllll}
{\rm i)} &\quad  \limsup\limits_{N} & \dis\frac{1}{N^{d-2}} \; \log \ov{\IP} [\rho_{N,u} \in \cD_{a,\delta}] & \le & - \mbox{\f $\dis\frac{1}{d}$} \;\big(\sqrt{a} - \sqrt{u}\big)^2 {\rm cap}(K),
\\[2ex]
{\rm ii)} &\quad  \liminf\limits_{N} & \dis\frac{1}{N^{d-2}} \; \log \ov{\IP} [\rho_{N,u} \in \cD_{a,\delta}] & \ge & -\mbox{\f $\dis\frac{1}{d}$} \;\big(\sqrt{a} - \sqrt{u}\big)^2 {\rm cap}(K^\delta),
\end{array}
\end{equation}

\n
where ${\rm cap}(\cdot)$ stands for the Brownian capacity (see below (\ref{1.21})) and $K^\delta$ for the closed $\delta$-neighborhood of $K$. As $\delta \r 0$, one knows that ${\rm cap}(K^\delta)\downarrow {\rm cap}(K) $, see Remark \ref{rem6.5} 1), so that upper and lower bounds in (\ref{0.7}) become identical. Actually, one can let $\delta$ slowly tend to $0$ so that the corresponding upper and lower bounds match with the right-hand side of (\ref{0.7}) i), see Remark \ref{6.5} 2).

\medskip
The asymptotics (\ref{0.7}) has an interesting consequence. There is an intuitive strategy to ensure that the level set $\{f_N \ge a\}$ disconnects $K$ from $\partial B_0$ (or in other words that $\{\rho_{N,u} \in \cD_{a,\delta}\}$ occurs). Roughly speaking, it consists in inducing a suitable increase of the rate of the Poisson distribution of the number of bilateral trajectories with label at most $u$, which enter $(NK^\delta) \cap \IZ^d$. The lower bound (\ref{0.7}) ii) shows that this intuitive strategy is sub-optimal, see Remark \ref{rem6.5} 4). In essence, this strategy leads to a version of (\ref{0.7}) ii) where $(\sqrt{a} - \sqrt{u})^2$ is replaced by the strictly bigger quantity $a \log \frac{a}{u} - a + u$.

\medskip
Further, it is known that the vacant set $\cV^u$ of random interlacements at level $u$ undergoes a phase transition between a percolative phase when $u < u_*$, and a non-percolative phase when $u > u_*$, for a certain critical level $u_* \in (0,\infty)$ (see \cite{Szni10a}, \cite{SidoSzni09}, and also \cite{DrewRathSapo12}, \cite{PopoTeix13} for recent developments). When $u < u_*$, the vacant set of random interlacements is in a percolative regime. In the context of Bernoulli percolation, disconnecting a large macroscopic body in the percolative phase would involve an exponential cost proportional to $N^{d-1}$ (and surface tension), in the spirit of the study of the presence of a large finite cluster at the origin, see p.~216 of \cite{Grim99}, and Theorem 2.5, p.~16 of \cite{Cerf00}. In the present context, one may wonder whether large deviations of the density profile, as in (\ref{0.7}), with an exponential cost proportional to $N^{d-2}$,  capture the main mechanism ensuring that a macroscopic body gets disconnected from infinity by the interlacement at level $u$, when $u < u_*$. We refer to Remark \ref{rem6.5} 5) and to \cite{LiSzni13b} for more on this topic.

\medskip
It is also of interest to point out that the large deviation principle (\ref{0.2}) has some similar flavor to results of \cite{BoltDeus93} concerning the Gaussian free field (although the approaches in the two articles are quite different). This feature is in line with the isomorphism theorem of \cite{Szni12b}, which relates the field of occupation times of random interlacements to the square of Gaussian free fields.

\medskip
Let us give some comments concerning proofs. The large deviation principle (\ref{0.6}) (concerning the profile of the Brownian occupation-time measure) is used as a step in the proof of (\ref{0.2}). Due to the scaling property of $\cL_\alpha$, see (\ref{1.25}), $\nu_{L,\alpha}$ has the same distribution as $\wt{\nu}_{L,\alpha}$, the restriction to $B$ of $\frac{1}{L^{d-2}} \,\cL_{\alpha L^{d-2}}$. Since $\cL_\alpha$ has stationary and independent increments, (\ref{0.6}) can be proved by means of general Cram\'er theory and sub-additivity, see for instance \cite{DembZeit98}, p.~252. In this process, one important ingredient is a new identity for the Laplace functional of $\cL_\alpha$, which is based on methods of Schr\"odinger semi-groups, see \cite{Chen02}, \cite{ChenSong02}, \cite{ChunZhao95}. Indeed, we show in Theorem \ref{theo2.2} that for $\alpha \ge 0$, and any bounded measurable function $V$ on $\IR^d$, with compact support,
\begin{align}
&\IE [e^{\langle \cL_\alpha, V\rangle}] = e^{\alpha \Gamma(V)}, \; \mbox{where} \label{0.8}
\\[1ex]
&\Gamma(V) = \dis\int_{\IR^d} V \,dy + \sup\limits_{\varphi \in L^2 (\IR^d)} \Big\{2 \dis\int_{\IR^d} V \varphi \,dy + \dis\int_{\IR^d} V \varphi^2 dy - \cE(\varphi, \varphi)\Big\} \in (-\infty, + \infty], \label{0.9}
\end{align}
and for $\varphi \in L^2(\IR^d)$,
\begin{equation}\label{0.10}
\cE(\varphi,\varphi) =\left\{ \begin{array}{l}
\fr \;\dis\int_{\IR^d} |\nabla \varphi |^2 dy, \;  \mbox{if $\varphi \in H^1(\IR^d)$},
\\[2ex]
+ \infty, \;\mbox{otherwise},
\end{array}\right.
\end{equation}

\n
stands for the Dirichlet form attached to the Brownian semi-group (acting on $L^2(\IR^d)$), see also below (\ref{2.4}). Both members of (\ref{0.8}) may well be infinite. Remarkably, and unlike Proposition 2.6 of \cite{Szni13a} (see also Remark \ref{rem2.1} below), (\ref{0.8}) is an identity between extended numbers in $(0,+ \infty]$, which does not require any smallness assumption on $V$. We also refer to Remark \ref{rem2.3} for the discrete space counterpart of this identity.

\medskip
The proof of the main large deviation principle (\ref{0.2}) on the density profile $\rho_{N,u}$ appears in Theorem \ref{theo5.8}. It relies on sub-additivity and naturally splits into a lower bound and an upper bound. On the one hand, the lower bound (proved in Theorem \ref{theo5.4}) first relies on a lower bound stemming from sub-additivity. It then exploits a combination of the fact that the Brownian occupation-time measure is a scaling limit of occupation times of random interlacements on $\IZ^d$  (as proved in \cite{Szni13a} and recalled in (\ref{4.5}) below), and the key large deviation result on $\frac{1}{\alpha} \,\cL_{\alpha}$ restricted to a box (proved in Section 3). On the other hand, the upper bound (proved in Theorem \ref{theo5.5}) combines an upper bound stemming from sub-additivity, involving a discrete version of the functional in (\ref{0.3}), and an estimate in the spirit of Gamma-convergence (see Chapter 7 of \cite{Dalm93}), which compares the large $N$ behavior of a sequence of variational problems on the scaled lattices $\frac{1}{N}\,\IZ^d$ to a suitable continuous-space variational problem.

\medskip
The asymptotic bounds (\ref{0.7}) on the probability of insulation of a large macroscopic body by high values of the (smoothed-out) occupation-time profile (see Theorems \ref{theo6.2} and \ref{theo6.4}) are direct applications of the main large deviation principle (\ref{0.2}) and of the structure of the rate function, see Lemma \ref{3.1}.

\medskip
Let us now describe the organization of this article. Section 1 introduces further notation, collects material concerning Schr\"odinger semi-groups, and recalls some properties of Brownian interlacements. The main objective of Section 2 is to establish the identity (\ref{2.5}) in Theorem \ref{theo2.2}. Some of the key consequences of this identity appear in the Corollaries \ref{cor2.4} and \ref{cor2.5}. The discrete space situation is discussed in Remark \ref{rem2.3}. Section 3 derives in Theorem \ref{theo3.2} a large deviation principle for the Brownian interlacement case, which plays a central role. Its application to the proof of (\ref{0.6}) appears in Corollary \ref{cor3.3}. The main properties of the rate function $I_v$ are collected in Lemma \ref{lem3.1}. The short Section 4 describes the (scaled) discrete space set-up and some of the results following by the methods of Section 2 in this context, see Theorem \ref{theo4.1} and Corollary \ref{cor4.2}. Section 5 is devoted to the proof of (\ref{0.2}), see Theorem \ref{theo5.8}. The lower bound appears in Theorem \ref{theo5.4} and the upper bound in Theorem \ref{theo5.5}. The main sub-additive estimates are contained in Corollary \ref{cor5.3}, and the relevant form of the scaling limit of occupation times in Lemma \ref{lem5.1}. The last Section 6 contains the proof of (\ref{0.7}), see Theorems \ref{theo6.2} and \ref{theo6.4}. Extensions are discussed at the end, in Remark \ref{rem6.5}.

\medskip
Finally, let us explain the convention we use concerning constants. We denote by $c,c',\ov{c}$ positive constants changing from place to place, which simply depend on $d$. Numbered constants $c_0,c_1,\dots$ refer to the value corresponding to their first appearance in the text. Dependence of constants on additional parameters appears in the notation.

\pagebreak
\section{Some useful facts about Schr\"odinger semi-groups and Brownian interlacements}
\setcounter{equation}{0}

In this section, we first introduce some further notation. We collect some rather classical properties of Schr\"odinger semi-groups and gauge functions, which will be useful in the next section, see Proposition \ref{prop1.2} and Lemma \ref{lem1.3}. Moreover, we recall some properties of Brownian interlacements on $\IR^d$. Further facts concerning  continuous time random interlacements on $\IZ^d$ will appear in Section 4. Throughout, we tacitly assume that $d \ge 3$.

\medskip
We write $| \cdot |$ and $| \cdot |_\infty$ for the Euclidean and the supremum norms on $\IR^d$. We denote by $B(y,r)$ the closed Euclidean ball with center $y \in \IR^d$ and radius $r \ge 0$. We write $B_\infty(y,r)$ in the case of the supremum norm. Given $A,B \subseteq \IR^d$, we denote by $d(A,B) = \inf\{|y - y'|$; $y \in A$, $y' \in B\}$ the mutual Euclidean distance of $A$ an $B$. When $A = \{y\}$, we simply write $d(y,B)$ in place of $d(\{y\}, B)$. We define $d_\infty(A,B)$ and $d_\infty(y,B)$ analogously, with $| \cdot |_\infty$ in place of $| \cdot |$. The shorthand notation $K \subset\subset \IR^d$, resp.~$K \subset\subset \IZ^d$, means that $K$ is compact subset of $\IR^d$, resp.~a finite subset of $\IZ^d$. We denote by $f_+ = \max\{f,0\}$, $f_- = \max\{-f,0\}$, the positive and negative part of a function $f$. We routinely write $\langle \nu,f\rangle$ to denote the integral with respect to a measure $\nu$ of a measurable, non-negative, or $\nu$-integrable, function $f$. When $f,h$ are measurable functions on $\IR^d$ such that $|f h|$ is Lebesgue-integrable, we write $\langle f,h \rangle = \int_{\IR^d} f(y) h(y) dy$. We denote by $\|f\|_\infty$ the supremum norm of the function $f$, and by $\|f\|_p$ its $L^p$-norm $(1\le p < \infty)$. We specify the relevant $L^p$-space in the notation when there might be some ambiguity, and write for instance $\|f\|_{L^p(\IR^d,dy)}$ or $\|f\|_{L^p(B,\nu)}$.

\medskip
We denote by $W_+$ the subspace of $C(\IR_+, \IR^d)$ of continuous $\IR^d$-valued trajectories tending to infinity at infinite times. We write $(X_t)_{t \ge 0}$ for the canonical process, and denote by $(\theta_t)_{t \ge 0}$ the canonical shift. We endow $W_+$ with the $\sigma$-algebra $\cW_+$ generated by the canonical process. Given an open set $U$ of $R^d$, $w \in W_+$, we write $T_U(w) = \inf\{s \ge 0; X_s(w) \notin U\}$, for the exit time of $U$. When $F$ is a closed subset of $\IR^d$, we write $H_F(w) = \inf\{s \ge 0$; $X_s(w) \in F\}$, and $\wt{H}_F (w) = \inf\{s > 0; X_s(w) \in F\}$, for the respective entrance, and hitting times of $F$. We assume $d \ge 3$, so that Brownian motion on $\IR^d$ is transient, and we view $P_y$, the Wiener measure starting from $y \in \IR^d$, as defined on $(W_+, \cW_+)$. We denote by $E_y$ the corresponding expectation. When $\rho$ is a finite measure on $\IR^d$, we write $P_\rho$ for the Wiener measure with ``initial distribution'' $\rho$ and $E_\rho$ for the corresponding expectation. We write $p(t,y,y') = (2 \pi t)^{-\frac{d}{2}} \exp\{-\frac{|y-y'|^2}{2 t}\}$, with $t > 0$, $y,y' \in \IR^d$, for the Brownian transition density.

\medskip
We now recall some properties of the Schr\"odinger semi-groups we consider here. We denote by $L^\infty_c(\IR^d)$ the space of bounded measurable functions $V$ on $\IR^d$, which vanish outside a compact subset of $\IR^d$. Given $V \in L^\infty_c(\IR^d)$, we introduce the Schr\"odinger semi-group attached to $V$, namely  the strongly continuous self-adjoint semi-group on $L^2(\IR^d,dy)$, see Proposition 3.3, p.~16 of \cite{Szni98a},
\begin{align}
R^V_t  f(y) & = E_y\big[f(X_t)\,e^{\int^t_0 V(X_s)ds}\big], \; \mbox{for} \; f \in L^2(\IR^d), t \ge 0, y \in \IR^d, \label{1.1}
\\
& = \mbox{\f $\dis\int$}_{\IR^d} r_V(t,y,y') \,f(y')dy', \; \mbox{when $t > 0$, where} \nonumber
\\
r_V(t,y,y') & = p(t,y,y')\,E^t_{y,y'} \Big[\exp\Big\{\mbox{\f $\dis\int$}^t_0 V(X_s)ds\Big\}\Big], \; \mbox{for $t >0, y,y' \in \IR^d$},\label{1.2}
\end{align}

\pagebreak\n
is a symmetric function of $y,y'$, see Proposition 3.1, p.~13-14 of \cite{Szni98a}, which is jointly continuous, see Proposition 3.5, p.~18 of \cite{Szni98a}, and $E^t_{y,y'}$ stands for the expectation corresponding to $P^t_{y,y'}$, the Brownian bridge measure in time $t$ from $y$ to $y'$, see p.~137-140 of  \cite{Szni98a}. As an immediate consequence of (\ref{1.2}),
\begin{equation}\label{1.3}
e^{-\|V\|_\infty t} p(t,y,y') \le r_V(t,y,y') \le e^{\|V\|_\infty t}\, p(t,y,y'), \; \mbox{for} \; t > 0, y,y' \in \IR^d.
\end{equation}

\n
We now turn to the discussion of the Green operators corresponding to the Schr\"odinger semi-groups. We thus consider $V \in L^\infty_c(\IR^d)$ as above and define
\begin{equation}\label{1.4}
G_V  f(y) = E_y\Big[ \dis\int^\infty_0 f(X_s) \,e^{\int^s_0 V(X_u)du}ds\Big] =  \dis\int_{\IR^d}\dis\int^\infty_0 r_V(s,y,y')\,f(y')ds\,dy',
\end{equation}

\medskip\n
for $f$ a measurable non-negative function on $\IR^d$, and $y \in \IR^d$.

\medskip
When $V = 0$ (so $r_V(t,y,y') = p(t,y,y')$) we simply write $G$, and recover the usual Green operator attached to Brownian motion
\begin{equation}\label{1.5}
G f(y) = E_y \Big[ \dis\int^\infty_0 f(X_s)ds\Big] = \mbox{\f $\dis\frac{\Gamma(\frac{d}{2} -1)}{2 \pi^{\frac{d}{2}}}$} \;\dis\int_{\IR^d} \; \mbox{\f $\dis\frac{1}{|y - y'|^{d-2}}$} \;f(y')dy.
\end{equation}

\n
We introduce in the (classical) lemma below  a condition corresponding to the so-called sub-criticality (of $\frac{1}{2} \,\Delta + V$) see p.~145, as well as pp.~129, 136, 150 of \cite{Pins95}. Our assumptions are slightly different and we briefly sketch the proof for the reader's convenience.

\begin{lemma}\label{lem1.1} (recall $V \in L^\infty_c(\IR^d)$)

\medskip
Assume that
\begin{equation}\label{1.6}
\begin{array}{l}
G_V W(y_0) < \infty, \; \mbox{for some $y_0 \in \IR^d$ and some $[0,1]$-valued, measurable}\\
\mbox{function $W$ not a.e.\,equal to $0$}.
\end{array}
\end{equation}
Then, for any bounded open set $U$,
\begin{equation}\label{1.7}
\mbox{$G_V \,1_U$ is a bounded function}.
\end{equation}
Moreover, in the notation of (\ref{1.1}), the Schr\"odinger semi-group
\begin{equation}\label{1.8}
\begin{array}{l}
\mbox{$(R^V_t)_{t \ge 0}$ is a strongly continuous semi-group of self-adjoint contractions }\\
\mbox{on $L^2(\IR^d)$}.
\end{array}
\end{equation}
\end{lemma}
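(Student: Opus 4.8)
The plan is to first establish the boundedness claim \eqref{1.7} from the single finiteness assumption \eqref{1.6}, and then deduce the semi-group statement \eqref{1.8} more or less for free. For the first part, the natural tool is a Harnack-type / maximum-principle argument for the gauge function $y \mapsto G_V W(y)$, combined with the local boundedness of the Schr\"odinger kernel coming from \eqref{1.3}. Concretely, I would fix a large ball $B(0,\rho)$ containing $\mathrm{supp}\,V$ and containing $U$, and split the path integral in \eqref{1.4} at the exit time $T_{B(0,\rho)}$ of that ball. On $\{T_{B(0,\rho)} < \infty\}$, after time $T_{B(0,\rho)}$ the trajectory has left the support of $V$, so the exponential weight $e^{\int_0^s V}$ is frozen at its value $e^{\int_0^{T_{B(0,\rho)}} V}$, which lies in $[e^{-\|V\|_\infty \cdot (\text{something})}, \ldots]$ --- more usefully, by the strong Markov property the tail contribution is $E_y[e^{\int_0^{T} V(X_u)\,du}\, (G_V W)(X_{T})]$ with $X_{T} \in \partial B(0,\rho)$, reducing everything to controlling $G_V W$ on the sphere $\partial B(0,\rho)$. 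Meanwhile the piece before $T_{B(0,\rho)}$ is $\le e^{\|V\|_\infty \cdot \mathbb{E}_y[T_{B(0,\rho)}]}\,\|W\|_\infty\,\mathbb{E}_y[T_{B(0,\rho)}] < \infty$ uniformly in $y$ in the ball (the Brownian exit time from a ball has bounded expectation). So the real content is: finiteness of $G_V W$ at one point $y_0$ propagates to a uniform bound on $\partial B(0,\rho)$, hence on all of $\IR^d$.

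To propagate finiteness, I would use the fact that $r_V(t,y,y')$ is jointly continuous and, by \eqref{1.3}, comparable to $p(t,y,y')$ on compact time intervals, so $G_V W$ is either identically $+\infty$ or locally bounded on any ball meeting $\{W > 0\}$ via a standard Harnack chain: write $G_V W(y_0) = \int_0^1 R^V_t W(y_0)\,dt + R^V_1 (G_V W)(y_0)$; the first term is finite and bounded below by a positive multiple of $\int W$ (using the lower Gaussian bound in \eqref{1.3} and $W$ not a.e.\ zero), and finiteness of the left side forces $R^V_1(G_V W)(y_0) < \infty$, i.e. $\int r_V(1,y_0,y')\,G_V W(y')\,dy' < \infty$; since $r_V(1,y_0,\cdot)$ is bounded below by a strictly positive Gaussian on every ball, $G_V W$ is finite $dy$-a.e., and then a further application of $R^V_t$ (smoothing) plus joint continuity upgrades this to: $G_V W$ is finite everywhere and locally bounded. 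In particular it is bounded on the compact sphere $\partial B(0,\rho)$, which is what the splitting above needs; and finally, since $W$ is only assumed $[0,1]$-valued and not a.e.\ zero while $1_U$ is bounded with compact support, one replaces $W$ by a constant multiple of $1_U$ after noting $G_V 1_U \le c\, G_V(1_{B(0,\rho)})$ and comparing with $G_V W$ on the larger ball --- here one may need $W$ to have support overlapping $U$, handled again by a Harnack chain moving mass from $\{W>0\}$ to $U$ through finitely many balls.

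For \eqref{1.8}: strong continuity and self-adjointness of $(R^V_t)$ on $L^2$ are already recorded (they hold for any $V \in L^\infty_c$, cf. the reference to Proposition 3.3 of \cite{Szni98a} cited above \eqref{1.1}); the only new point is the \emph{contraction} property $\|R^V_t f\|_2 \le \|f\|_2$, equivalently $r_V(t,y,y') \le $ something with subunit $L^2$-operator norm, equivalently the bottom of the spectrum of $-\tfrac12\Delta - V$ is $\ge 0$. This follows from \eqref{1.7}: boundedness of the gauge $G_V 1_U$ for all bounded open $U$ is exactly the sub-criticality condition, and a standard argument (resolvent identity $G_V = G + G V G_V$, or directly estimating $\langle R^V_t f, f\rangle$ via $G_V$ and the Laplace transform $\int_0^\infty e^{-\lambda t} R^V_t\,dt$) shows the semigroup norm does not exceed $1$; alternatively, $R^V_t 1 \le \sup_y G_V 1_{B(0,\rho)}(y)/(\text{const})$-type bounds plus symmetry and interpolation ($R^V_t$ bounded on $L^\infty$ with the right constant, symmetric, hence bounded on $L^1$, hence a contraction on $L^2$ by Riesz--Thorin once one checks the $L^\infty$ bound is actually $1$ in the limit $t\to\infty$ --- more cleanly, use that $u(y) := 1 + \varepsilon G_V 1_{B(0,\rho)}(y)$ is a bounded positive supersolution, giving $R^V_t u \le u$ and thus $\|R^V_t\|_{\infty\to\infty}\le 1 + o(1)$). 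I expect the main obstacle to be the propagation-of-finiteness step: making the Harnack chain rigorous requires being slightly careful about where $\{W > 0\}$ sits relative to $U$ and $\mathrm{supp}\,V$, and about interchanging the time integral in \eqref{1.4} with expectations --- but all of this is classical (as the statement itself advertises), so the write-up should be short.
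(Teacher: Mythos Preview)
Your propagation-of-finiteness argument in the second paragraph is essentially the paper's: the identity $G_V W = \int_0^1 R^V_t W\,dt + R^V_1 G_V W$ combined with the Gaussian lower bound on $r_V(1,y_0,\cdot)$ forces $G_V W < \infty$ a.e., and one more smoothing gives local boundedness. The exit-time splitting in the first paragraph, however, contains a genuine error: the claimed bound
\[
E_y\Big[\int_0^{T} W(X_s)\,e^{\int_0^s V}\,ds\Big] \;\le\; e^{\|V\|_\infty\,E_y[T]}\,E_y[T]
\]
is false---Jensen points the wrong way. What one actually controls pathwise is $\int_0^T e^{\|V\|_\infty s}\,ds$, whose expectation equals $\|V\|_\infty^{-1}\bigl(E_y[e^{\|V\|_\infty T}]-1\bigr)$, and the exit time from a ball has exponential moments only below the principal Dirichlet eigenvalue of that ball; since the ball must contain $\mathrm{supp}\,V\cup U$, there is no reason $\|V\|_\infty$ should lie below that threshold. (The remark that the weight is ``frozen'' after the exit time is also wrong---Brownian motion returns to the ball---though the strong Markov formula you then write for the tail is correct and absorbs this.)

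The paper avoids the exit-time decomposition entirely. Once $G_V W$ is locally bounded, it passes from $W$ to $1_U$ by the one-line inequality
\[
G_V W(y) \;\ge\; \int_0^\infty\!\!\int r_V(s+1,y,y')\,W(y')\,dy'\,ds \;\ge\; G_V 1_U(y)\cdot\inf_{z\in U} R^V_1 W(z),
\]
the last infimum being strictly positive since $W$ is not a.e.\ zero and $r_V(1,z,\cdot)$ has a Gaussian lower bound. This gives local boundedness of $G_V 1_U$ without any Harnack chain. Global boundedness then follows from the strong Markov property at the \emph{entrance} time $H_K$ of a compact $K \supset U\cup\mathrm{supp}\,V$: since both $V$ and $1_U$ vanish off $K$, one has $G_V 1_U(y)=E_y[(G_V 1_U)(X_{H_K}),\,H_K<\infty]$ for every $y$.

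For \eqref{1.8}, none of your sketches is quite complete. The supersolution $u=1+\varepsilon\,G_V 1_B$ does satisfy $R^V_t u\le u$ provided one takes $\varepsilon=\|V\|_\infty$ (not small $\varepsilon$), but converting this into an $L^2$-contraction requires a Doob $h$-transform argument you do not spell out; your Riesz--Thorin remark only gives operator norm $\le \sup u/\inf u$, not $1$. The paper's route is the spectral one you allude to via the Laplace transform: for $\varphi\in L^\infty_c(\IR^d)$, \eqref{1.7} gives $\int_0^\infty\langle R^V_t\varphi,\varphi\rangle\,dt\le\langle G_V|\varphi|,|\varphi|\rangle<\infty$, i.e.\ $\int_\IR\bigl(\int_0^\infty e^{-\lambda t}\,dt\bigr)\,dE_{\varphi,\varphi}(\lambda)<\infty$, which forces $dE_{\varphi,\varphi}\bigl((-\infty,0]\bigr)=0$; then $\|R^V_t\varphi\|_2^2=\int_{(0,\infty)} e^{-2\lambda t}\,dE_{\varphi,\varphi}(\lambda)\le\|\varphi\|_2^2$ and density finishes.
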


\begin{proof}
We first sketch the proof of (\ref{1.7}). We note that
\begin{equation*}
r_V(1,y_0,y) \ge c(V,y_0,y_1)\,r_V\Big(\fr, y_1,y\Big), \; \mbox{for} \; y_0,y_1, y \in \IR^d,
\end{equation*}

\n
with $c(V,\cdot,\cdot)$ locally bounded away from zero. Indeed, this follows from (\ref{1.3}) and the inequality $p(1,0,z)/p(\frac{1}{2},z_1,z) = 2^{-\frac{d}{2}} \exp\{\frac{1}{2} \,|z-2z_1|^2 - |z_1|^2\} \ge 2^{-\frac{d}{2}} \exp\{-|z_1|^2\}$, for $z,z_1 \in \IR^d$, combined with translation invariance (set $z =y - y_0$, $z_1 = y_1 - y_0$). Thus, for $y_1 \in \IR^d$, we find that the above inequality together with the semi-group property yields that
\begin{equation*}
\begin{array}{l}
\infty > G_V W(y_0) \stackrel{(\ref{1.4})}{\ge} \dis\int^\infty_0 \dis\int r_V(s+1,y_0,y) \,W(y)dy\,ds \ge \
\\
c(V,y_0,y_1) \dis\int^\infty_{\frac{1}{2}} \dis\int r_V(t,y_1,y)\,W(y)dy\,dt,
\end{array}
\end{equation*}

\n
which combined with (\ref{1.3}) (for the values $t \in (0,\frac{1}{2}])$ implies that $G_V W$ is locally bounded. In addition, by the semi-group property, we see that for $y \in \IR^d$,
\begin{equation*}
G_V W(y) \ge \dis\int^\infty_0 \dis\int r_V(s+1,y,y') \,W(y')dy'ds \ge G_V 1_U(y) \inf\limits_{z \in U} R^V_1 W.
\end{equation*}

\n
By our assumption on $W$, the last term is positive, and hence, $G_V 1_U$ is locally bounded. Choosing $K \subset \subset \IR^d$ containing $U$ and the support of $V$, the strong Markov property yields,
\begin{equation*}
G_V 1_U(y) = E_y\big[(G_V 1_U)(X_{H_K}), H_K < \infty\big], \; \mbox{for any} \; y \in \IR^d,
\end{equation*}
so $G_V 1_U$ is bounded and (\ref{1.7}) follows.

\medskip
We now turn to the proof of (\ref{1.8}). For $\varphi \in L^\infty_c(\IR^d)$ we denote by $d E_{\varphi,\varphi}(\lambda)$ the spectral measure of $\varphi$ (and $E$ is a spectral resolution of the identity of the generator of $(R^V_t)_{t \ge 0}$, see for instance Theorems 13.30 and 13.37, pp.~348, 360 of \cite{Rudi74a}).We find by (\ref{1.7}) that 
\begin{equation*}
\infty > \dis\int^\infty_0 dt \big\langle R^V_t  \varphi, \varphi \big\rangle = \dis\int^\infty_0 dt \dis\int_\IR e^{-\lambda t} dE_{\varphi,\varphi}(\lambda),
\end{equation*}

\n
 Hence, $dE_{\varphi,\varphi}$ gives no mass to $(-\infty,0]$, and therefore for $\varphi \in L^\infty_c(\IR^d)$,
\begin{equation*}
\big\langle R^V_t \varphi, R^V_t \varphi \big\rangle = \dis\int^\infty_0 e^{-2\lambda t} dE_{\varphi,\varphi}(\lambda) \le \dis\int^\infty_0 dE_{\varphi,\varphi}(\lambda) = \|\varphi\|^2_{L^2(\IR^d)}.
\end{equation*}
Since $L^\infty_c(\IR^d)$ is dense in $L^2(\IR^d)$, the claim (\ref{1.8}) follows.
\end{proof}

We now recall some properties of the gauge function
\begin{equation}\label{1.9}
\gamma_V(y) = E_y \big[e^{\int^\infty_0 V(X_s)ds}\big], \; \mbox{for} \; y \in \IR^d
\end{equation}

\n
(note that $V \in L^\infty_c(\IR^d)$ and the integral is finite due to transience, or more precisely, to the fact that $P_y$ is a probability on $W_+$, see above (\ref{1.1})). As the next proposition shows, the gauge function is closely related to the Schr\"odinger semi-group (attached to $V$) via its Green operator. We refer to \cite{Chen02}, \cite{ChenSong02}, and \cite{ChunZhao95}, for much more on the subject.

\begin{proposition}\label{prop1.2} (recall $V \in L^\infty_c (\IR^d)$)

\medskip
The condition (\ref{1.6}) is equivalent to
\begin{equation}\label{1.10}
\mbox{$\gamma_V$ is not identically infinite (Gauge Condition)}.
\end{equation}

\medskip\n
If (\ref{1.6}), or equivalently (\ref{1.10}), holds, then
\begin{align}
&\mbox{$\gamma_V$ is a bounded continuous function on $\IR^d$ tending to $1$ at infinity}, \label{1.11}
\\[1ex]
& \gamma_V = 1 + GV\gamma_V = 1 + G_V V. \label{1.12}
\end{align}
\end{proposition}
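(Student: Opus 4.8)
The plan is to prove Proposition~\ref{prop1.2} in three stages: first the equivalence of \eqref{1.6} and \eqref{1.10}, then the boundedness/continuity statement \eqref{1.11}, and finally the resolvent-type identities \eqref{1.12}. Throughout I will use the strong Markov property at the hitting time $H_K$ of a fixed compact $K \subset\subset \IR^d$ containing the support of $V$, together with Lemma~\ref{lem1.1}.

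\medskip\n
\textbf{Equivalence of \eqref{1.6} and \eqref{1.10}.} The implication \eqref{1.10}$\Rightarrow$\eqref{1.6} is the easy direction: if $\gamma_V(y_1) < \infty$ for some $y_1$, then by the Markov property at times $s$ and the identity $e^{\int_0^\infty V(X_u)\,du} = e^{\int_0^s V(X_u)\,du}\,\gamma_V(X_s)\circ\theta_s$ (using that conditionally on $\cW_s$ the tail behaves like a fresh Brownian path started at $X_s$), one gets $\gamma_V(y_1) = 1 + \int_0^\infty R^V_s(V\gamma_V)(y_1)\,ds \ge \int_0^\infty R^V_s(V_- )(y_1)\,ds$ controlled crudely; more directly, expanding $\gamma_V = E_{y_1}[e^{\int_0^\infty V}]$ and using $e^x \ge 1 + x$ gives $G_V \mathbf 1 \le$ something finite near $y_1$, but the clean route is: for $W$ supported in a small ball where we already know finiteness via Harnack-type comparison as in the proof of \eqref{1.7}, $G_V W(y_0)<\infty$. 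Conversely, assume \eqref{1.6}. By Lemma~\ref{lem1.1}, $G_V\mathbf 1_U$ is bounded for every bounded open $U$; taking $U \supset K$ gives $\sup_y G_V\mathbf 1_K(y) =: c_0 < \infty$. Now I want to bound $\gamma_V$. Write $e^{\int_0^\infty V(X_s)\,ds}$ and note all the mass of $\int_0^\infty V(X_s)\,ds$ comes from excursions in $K$; a standard "last-exit/return" decomposition of the occupation functional together with the bound $\sup_z R^V_\infty$-type estimate shows $\gamma_V(y) \le 1 + (\text{const})\,G_V\mathbf 1_K(y)\,\|V\|_\infty$ via a geometric-series argument over successive visits to $K$, each visit contributing a bounded factor because $\sup_{z}E_z[\text{time spent in }K \text{ weighted}]$ is finite. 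Hence $\gamma_V$ is finite, even bounded, giving \eqref{1.10} and the first half of \eqref{1.11}.

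\medskip\n
\textbf{Boundedness, continuity, behaviour at infinity, and the identities.} Once $\gamma_V$ is known finite, the identity \eqref{1.12} is obtained by a Dynkin-type/renewal computation: conditioning at an infinitesimal time and using the Markov property,
\[
\gamma_V(y) = E_y\Big[e^{\int_0^\infty V(X_s)\,ds}\Big] = 1 + E_y\Big[\int_0^\infty V(X_t)\,e^{\int_t^\infty V(X_s)\,ds}\,dt\Big] = 1 + E_y\Big[\int_0^\infty V(X_t)\,\gamma_V(X_t)\,dt\Big] = 1 + GV\gamma_V(y),
\]
where the first equality uses $e^{a}=1+\int_0^\infty a'(t)e^{a(t)-a(0)+\cdots}$, more precisely $\frac{d}{dt}e^{-\int_0^t V}= -V(X_t)e^{-\int_0^t V}$ integrated against the tail; and the last uses the Markov property at $t$ and the definition \eqref{1.9} of $\gamma_V$. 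The identity $GV\gamma_V = G_VV$ then follows by a second renewal/resolvent identity relating $G$ and $G_V$ (the standard perturbation formula $G_V = G + G_V V G = G + G V G_V$), applied to the function $\mathbf 1$: $G_V V \mathbf 1 = GV\mathbf 1 + GV G_V V\mathbf 1 = GV(\mathbf 1 + G_V V\mathbf 1) = GV\gamma_V$. Continuity of $\gamma_V$ follows from \eqref{1.12}: $GV\gamma_V$ is a Green potential of the bounded compactly supported function $V\gamma_V$, hence continuous by standard potential theory (e.g.\ dominated convergence in the representation \eqref{1.5}, using the local integrability of the kernel $|y-y'|^{2-d}$). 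Finally, $\gamma_V(y)\to 1$ as $|y|\to\infty$ because $GV\gamma_V(y) = \int_{K}\frac{\Gamma(d/2-1)}{2\pi^{d/2}}|y-y'|^{2-d}V(y')\gamma_V(y')\,dy' \to 0$ as $|y|\to\infty$ since the integrand is bounded on the compact $K$ and the kernel decays.

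\medskig\n
\textbf{Main obstacle.} The delicate point is the direction \eqref{1.6}$\Rightarrow$ (finiteness/boundedness of $\gamma_V$), i.e.\ upgrading finiteness of a single Green potential $G_VW(y_0)$ to finiteness of the exponential gauge $\gamma_V$ everywhere. This requires controlling the exponential moment $E_y[e^{\int_0^\infty V(X_s)\,ds}]$, not just the first moment $G_V W$. The standard mechanism (going back to Chung--Zhao and Chen--Song) is a Khasminskii-type argument: one shows that $\sup_{y}E_y[\int_0^{\tau} V_+(X_s)\,ds]$ over an appropriate stopping time is small enough (after using \eqref{1.7} to get it finite, then iterating to make tail contributions geometrically small), which forces the exponential functional to have all moments and be bounded. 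Setting up this iteration cleanly — choosing the right sequence of stopping times (successive returns to $K$, or exit times from small balls) and invoking the already-established boundedness of $G_V\mathbf 1_U$ to seed the induction — is the technical heart; everything else is the renewal-equation bookkeeping of \eqref{1.12} and routine potential-theoretic continuity. I would cite \cite{ChunZhao95}, \cite{Chen02}, \cite{ChenSong02} for the gauge theorem itself and only sketch the Khasminskii iteration, as the proposition is billed as "rather classical."
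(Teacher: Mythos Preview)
You have the relative difficulty of the two implications reversed, and this leads to a genuine gap in one direction and an unnecessary complication in the other.

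For \eqref{1.6}$\Rightarrow$\eqref{1.10}, which you flag as the ``main obstacle'' and propose to handle via a Khasminskii iteration over successive visits to $K$, the paper's argument is a one-liner that you miss entirely. From the elementary identity
\[
e^{\int_0^t V(X_s)\,ds} = 1 + \int_0^t V(X_s)\,e^{\int_0^s V(X_u)\,du}\,ds,
\]
one takes $E_y$-expectation, bounds $V$ by $|V|$, lets $t\to\infty$, and applies Fatou to obtain $\gamma_V(y)\le 1 + G_V|V|(y)$. Since $|V|\le \|V\|_\infty 1_U$ with $U$ bounded open containing the support of $V$, the right-hand side is bounded directly by \eqref{1.7} of Lemma~\ref{lem1.1}. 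No iteration, no excursion decomposition, no smallness condition is needed; the Green operator $G_V$ already has the exponential weight built in, which is precisely what makes the bound immediate. Your proposed Khasminskii route would require establishing $\sup_y E_y[\int_0^\tau V_+]<1$ for some stopping time $\tau$, and it is not clear how to extract that from the boundedness of $G_V 1_U$ alone without essentially reproving the gauge theorem.

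Conversely, the direction \eqref{1.10}$\Rightarrow$\eqref{1.6}, which you call ``easy,'' is the one where your argument is genuinely incomplete. Your sketch wanders between a circular use of \eqref{1.12}, an unspecified ``$e^x\ge 1+x$'' bound, and an appeal to ``Harnack-type comparison'' that is never made precise. Knowing only that $E_{y_1}[e^{\int_0^\infty V}]<\infty$ at a single point does not obviously produce any $W$ and $y_0$ with $G_V W(y_0)<\infty$: the finiteness of the exponential moment of $\int_0^\infty V$ gives no direct control on $E_{y_1}[\int_0^\infty |V|(X_s)\,e^{\int_0^s V}\,ds]$. The paper does not attempt a self-contained proof here; it invokes Theorem~2.8 of \cite{Chen02}, which asserts that if the gauge is not identically infinite (and $V_+$ is not a.e.\ zero) then $\|G_V V_+\|_\infty<\infty$, whence \eqref{1.6} follows with $W=V_+\wedge 1$. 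This is a nontrivial input from the gaugeability literature, and your write-up does not supply a substitute for it.

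Your treatment of \eqref{1.12} and of the continuity and decay in \eqref{1.11} is fine and close to the paper's: the paper uses the companion identity $e^{\int_0^t V}=1+\int_0^t V(X_s)e^{\int_s^t V}\,ds$ plus the Markov property and dominated convergence for the first equality of \eqref{1.12}, obtains the second equality of \eqref{1.12} directly from the displayed identity above (rather than via the resolvent relation $G_V=G+GVG_V$ you use, though both are valid), and deduces $\gamma_V\to 1$ at infinity from $\gamma_V=1+GV\gamma_V$ exactly as you do. Your continuity argument via the Green-potential representation is arguably more self-contained than the paper's citation of \cite{ChunZhao95}.
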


\begin{proof}
To show that (\ref{1.6}) implies (\ref{1.10}), we use the identity
\begin{equation}\label{1.13}
e^{\int^t_0 V(X_s)ds} = 1 + \dis\int^t_0 V(X_s) \,e^{\int^s_0 V(X_u) du} ds, \; \mbox{for $t \ge 0$}.
\end{equation}

\medskip\n
By Fatou's lemma, (\ref{1.13}) implies that $\gamma_V(y) \le 1 + G_V |V|(y)$, which is a bounded function of $y$, by (\ref{1.7}). The fact that $\gamma_V$ coincides with the last expression of (\ref{1.12}) follows by dominated convergence.

\medskip
To prove that (\ref{1.10}) implies (\ref{1.6}), either $V_+$ ($= \max (V,0)$) vanishes a.e., so that $r_V \le p$ and (\ref{1.6}) holds, or else, by Theorem 2.8, p.~4651 of \cite{Chen02}, $\|G_V V_+\|_\infty < \infty$ and $V_+$ is not a.e.\,equal to $0$. This implies (\ref{1.6}) (choosing $W = V_+ \wedge 1$). Thus (\ref{1.6}) and (\ref{1.10}) are equivalent.

\medskip
To prove (\ref{1.11}), we already know from the discussion below (\ref{1.13}) that $\gamma_V$ is a bounded function. It is continuous by the Corollary, p.~150 of \cite{ChunZhao95},  or (ii) in Theorem 4.7, p.~115 of the same reference. The fact that $\gamma_V$ tends to $1$ at infinity follows from the first equality of (\ref{1.12}), which we prove now. To derive the first equality of (\ref{1.12}) (and this will compete the proof of Proposition \ref{prop1.2}), we use the identity
\begin{equation}\label{1.14}
e^{\int^t_0 V(X_s)ds} = 1 + \dis\int^t_0 V(X_s) \,e^{\int^t_s V(X_u) du} ds, \; \mbox{for $t \ge 0$}.
\end{equation}

\n
Integrating with respect to $P_y$ and using the Markov property yields 
\begin{equation*}
E_y\big[e^{\int^t_0 V(X_s)ds}\big] = 1 + E_y\Big[\dis\int^t_0 V(X_s) \, E_{X_s} \big[e^{\int^{t-s}_0 V(X_u)du}\big] ds\Big], \;\mbox{for $y \in \IR^d$}.
\end{equation*} 

\n
By (\ref{1.7}) and (\ref{1.13}) we see that the inner expectation is uniformly bounded, and converges to $\gamma_V(X_s)$ as $t \r \infty$. The first equality of (\ref{1.12}) now follows by dominated convergence. The second equality results from the discussion below (\ref{1.13}), and the proof of Proposition \ref{prop1.2} is now complete.
\end{proof}

The following approximation lemma will be helpful in Sections 2 and 3. We say that $r$ and $r'$ in $(1,\infty)$ are conjugate exponents when $\frac{1}{r} + \frac{1}{r'} = 1$.

\begin{lemma}\label{lem1.3}
Consider $r > \frac{d}{2}$, and $V, V_n, n \ge 1$, in $L^\infty_c(\IR^d)$, which all vanish outside $K \subset \subset \IR^d$. Assume that (\ref{1.6}) holds for $V$ and that
\begin{equation}\label{1.15}
\lim\limits_n \,\|V - V_n\|_{L^r(\IR^d)} = 0.
\end{equation}
Then, for large $n$, (\ref{1.6}) holds for $V_n$, and
\begin{equation}\label{1.16}
\mbox{$\gamma_{V_n}$ converges uniformly to $\gamma_V$ on $\IR^d$}.
\end{equation}
\end{lemma}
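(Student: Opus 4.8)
The plan is to reduce everything to the resolvent identity \eqref{1.12}, $\gamma_V = 1 + G_V V$, and to show that the map $W \mapsto G_W W$ (and in fact $W\mapsto G_W$) depends continuously on $W$ in the relevant topologies. First I would recall the Kato-type estimate that for $r>d/2$ the Newtonian potential $G|W|$ is controlled by $\|W\|_{L^r}$ on the fixed compact set $K$: indeed $G\mathbf 1_K(y)\le c|y|^{2-d}\ast \mathbf 1_K$ is bounded, and more quantitatively $\|G(\mathbf 1_K g)\|_\infty \le c(K)\,\|g\|_{L^r}$ whenever $r>d/2$, because $|y|^{2-d}\mathbf 1_{B(0,R)}\in L^{r'}$ exactly when $r'(d-2)<d$, i.e.\ $r>d/2$. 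Consequently $\sup_n \|G|V_n|\,\|_\infty <\infty$ by \eqref{1.15}, so the family $\{V_n\}$ has uniformly bounded (hence, after a standard smallness-on-small-sets argument, uniformly "gaugeable") Newtonian potentials.

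Next I would establish that \eqref{1.6} holds for $V_n$ for large $n$. Since \eqref{1.6} holds for $V$, Proposition \ref{prop1.2} gives the Gauge Condition \eqref{1.10}, and by \eqref{1.11}–\eqref{1.12}, $\gamma_V$ is bounded, say by $M$, and $\gamma_V = 1 + G_V V$. Write the Neumann-type expansion $G_{V_n} = \sum_{k\ge0}(G V_n)^k G$ (valid once $\|GV_n\|$, as an operator on $L^\infty(K)$ or on the appropriate weighted space, is controlled); the point is that $\|G(V_n-V)\|$ as an operator on the Banach space of bounded functions on $K$ tends to $0$ by the Kato estimate above applied to $g=V_n-V$. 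Hence $G_{V_n}\mathbf 1_U$ stays bounded for large $n$, which is condition \eqref{1.7}, equivalently \eqref{1.6}, for $V_n$. (Alternatively one can invoke Theorem 2.8 of \cite{Chen02} directly once the potentials are uniformly Kato-small.)

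Finally, for the uniform convergence \eqref{1.16}, I would subtract the two resolvent identities: $\gamma_{V_n}-\gamma_V = (G_{V_n}V_n - G_V V)$, and write
\[
G_{V_n}V_n - G_V V = G_{V_n}(V_n - V) + (G_{V_n}-G_V)V .
\]
The first term is bounded in sup-norm by $\|G_{V_n}\mathbf 1_K\|_\infty^{\,} $ times... more carefully, $|G_{V_n}(V_n-V)|\le \big(1+\gamma_{\,}\big)\,\|G(\mathbf 1_K|V_n-V|)\|_\infty\to 0$ using again the Kato estimate and the uniform gauge bound. For the second term, use the resolvent identity $G_{V_n}-G_V = G_{V_n}(V_n-V)G_V$ (or $G_V(V_n-V)G_{V_n}$), so that $(G_{V_n}-G_V)V = G_{V_n}\big((V_n-V)\,G_V V\big) = G_{V_n}\big((V_n-V)(\gamma_V-1)\big)$, whose sup-norm is again bounded by $c(K)\,\|\gamma_V-1\|_\infty\,\|V_n-V\|_{L^r}\to 0$ by \eqref{1.15}. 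Combining the two bounds gives $\|\gamma_{V_n}-\gamma_V\|_\infty\to 0$, which is \eqref{1.16}.

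The main obstacle I expect is making the operator-theoretic manipulations rigorous on the right function space: $G_{V_n}$ is most naturally an operator on $L^2(\IR^d)$, but the gauge functions live in $L^\infty$, so one has to be careful to run the Neumann series and the resolvent identity on the Banach space of bounded functions supported near $K$ (using the strong Markov/last-exit decomposition as in the proof of \eqref{1.7} to pass from "locally bounded" to "bounded"), and to justify that $G_{V_n}$ is a \emph{bounded} operator there uniformly in $n$. This is exactly where the hypothesis $r>d/2$ is used, through the Kato-class bound $\|G(\mathbf 1_K g)\|_\infty\le c(K)\|g\|_{L^r}$ and the ensuing smallness of $G(\mathbf 1_K|V_n-V|)$; once that is in hand, all the series converge geometrically and the estimates above are routine.
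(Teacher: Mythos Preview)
Your approach via resolvent identities is genuinely different from the paper's, which works probabilistically: it invokes the Super Gauge Theorem (Theorem 2.17 of \cite{Chen02}) to obtain $\|\gamma_{pV}\|_\infty<\infty$ for some $p>1$, then applies H\"older's inequality to the Feynman--Kac representation to separate $e^{\int V}$ from $|e^{\int(V_n-V)}-1|$, and finally shows $\sup_y|E_y[e^{\ell\int\Delta_n}]-1|\to 0$ for each fixed $\ell$ by the geometric series $\sum_{k\ge 1}\ell^k\|G|\Delta_n|\|_\infty^k$, using only the Kato bound $\|G|\Delta_n|\|_\infty\le c(r,K)\|\Delta_n\|_r$. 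No operator bounds on $G_V$ or $G_{V_n}$ beyond (\ref{1.7}) are needed.

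Your argument, by contrast, has a genuine gap. The Neumann expansion $G_{V_n}=\sum_{k\ge 0}(GV_n)^kG$ requires $\|GV_n\|_{L^\infty\to L^\infty}<1$, i.e.\ $V_n$ small, which is \emph{not} assumed: only $V_n-V$ is small, and $V$ is arbitrary subject to (\ref{1.6}). So this series need not converge, and your derivation of (\ref{1.6}) for $V_n$ collapses. The final estimates are then circular: you bound $\|G_{V_n}((V_n-V)\gamma_V)\|_\infty$ by $c(K)\|\gamma_V-1\|_\infty\|V_n-V\|_{L^r}$, which presupposes an $L^r(K)\to L^\infty$ bound on $G_{V_n}$, uniformly in $n$ --- but $G_{V_n}$ is a priori defined as a possibly infinite integral, and such a bound is precisely what (\ref{1.6}) for $V_n$ would give you. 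The Kato estimate $\|G(\mathbf 1_Kg)\|_\infty\le c(K)\|g\|_r$ is for $G$, not for $G_{V_n}$, and does not transfer without further argument.

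The approach is salvageable, but you must perturb around $G_V$, not $G$. First establish the $L^r(K)\to L^\infty$ bound for $G_V$ itself: from the resolvent identity $G_V g = Gg + G_V(V\cdot Gg)$, together with $\|Gg\|_\infty\le c(K)\|g\|_r$ and (\ref{1.7}), one gets $\|G_Vg\|_\infty\le (1+\|V\|_\infty\|G_V\mathbf 1_K\|_\infty)\,c(K)\|g\|_r$. This yields $\|G_V(V_n-V)\|_{L^\infty\to L^\infty}\le C\|V_n-V\|_r\to 0$, so for large $n$ the Neumann series $(I-G_V(V_n-V))^{-1}$ converges on $L^\infty$. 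You then still need to identify the resulting operator with $G_{V_n}$ (equivalently, to check that the bounded fixed point it produces really is $\gamma_{V_n}$), which requires a short uniqueness argument. None of these steps is in your proposal as written.
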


\begin{proof}
By Theorem 2.17, p.~4660 of \cite{Chen02} (Super Gauge Theorem), for some $p > 1$, $pV$ satisfies (\ref{1.10}) and hence (\ref{1.6}). Thus, denoting by $q$ the conjugate exponent of $p$,
\begin{equation}\label{1.17}
\begin{array}{lcl}
|\gamma_{V_n}(y) - \gamma_V(y)| &\!\!\!\! \le & \!\!\!\!  E_y \big[\big|e^{\int^\infty_0(V_n-V)(X_s)ds} - 1\big|e^{\int^\infty_0 V(X_s)ds}\big] 
\\[1ex]
&\!\!\!\! \stackrel{\mbox{\small H\"older}}{\le}  &\!\!\!\!  E_y \big[\big|e^{\int^\infty_0  (V_n-V)(X_s)ds} - 1\big|^q\big]^{\frac{1}{q}} \,\big\|\gamma_{pV}\big\|^{\frac{1}{p}}_\infty, \; \mbox{for all} \; y \in \IR^d
\end{array}
\end{equation}
(and $\| \gamma_{pV}\|_\infty$ is finite by (\ref{1.11})).

\medskip
Thus, choosing $2m \ge q$, with $m \ge 1$ integer, and setting $\Delta_n = V_n - V$, we find that the $(2m)$-th power of the first term in the last line of (\ref{1.17}) is smaller than
\begin{equation*}
E_y \big[ \big(e^{\int^\infty_0\Delta_n(X_s)ds} - 1\big)^{2m}\big] = \dsl^{2m}_{\ell = 0} \; {2m \choose \ell} (-1)^{2m - \ell} E_y \big[e^{\int^\infty_0\ell \Delta_n(X_s)ds} \big].
\end{equation*}
The claim (\ref{1.16}) will thus follow once we show that for each fixed $\ell \ge 0$,
\begin{equation}\label{1.18}
\sup\limits_{y \in \IR^d} \big| E_y\big[e^{\ell\int^\infty_0 \Delta_n(X_s)ds} \big] - 1 \big| \underset{n}{\longrightarrow} 0\,.
\end{equation}

\n
Expanding the exponential and using the Markov-property, we can bound the absolute value in (\ref{1.18}) by $\sum_{k \ge 1} \ell^k \|G \,|\Delta_n|\, \|^k_\infty$.

\medskip
By assumption, $r > \frac{d}{2}$, so, the conjugate exponent $r'$ belongs to $(1, \frac{d}{d-2})$. Then, for $z \in \IR^d$, 
\begin{align*}
G|\Delta_n|(z) \stackrel{(\ref{1.5})}{=} c \dis\int \dis\frac{|\Delta_n(y)|}{|y-z|^{d-2}} \;dy & \stackrel{\mbox{\f H\"older}}{\le} c \Big(\dis\int_K\dis\frac{1}{|y-z|^{(d-2)r'}}\,dy\Big)^{\frac{1}{r'}} \|\Delta_n\|_r
\\[1ex]
&\;\;\;  \le c(r,K) \,\|\Delta_n\|_r 
\end{align*}

\n
(for the last bound, where the dependence in $z$ has disappeared, one considers the smallest $R \ge 1$ such that $B(0,R) \supseteq K$, and looks separately at $z \notin B(0,2R)$, or $z \in B(0,2R)$ and hence $K \subseteq B(z,3R)$). So, we see that for $\ell \le 2 m$,
\begin{equation}\label{1.19}
\begin{split}
\sup\limits_{y \in \IR^d} \big|E_y\big[e^{\ell \int^\infty_0 \Delta_n(X_s)ds}\big] - 1\big| & \le \dsl_{k \ge 1} (2m)^k \|G\,|\Delta_n|\,\|_\infty^k
\\
& \le \dis\frac{c(r,K)\,2m\|\Delta_n\|_r}{(1-c(r,K)\,2m \|\Delta_n\|_r)_+} \; \underset{n}{\stackrel{(\ref{1.5})}{\longrightarrow}} 0.
\end{split}
\end{equation}

\n
This proves (\ref{1.18}) and (\ref{1.16}) follows.
\end{proof}

We now recall some properties of Brownian interlacements, and refer to Section 2 of \cite{Szni13a} for more detail. Brownian interlacements correspond to a certain Poisson point process on a state space, which is the product $W^* \times \IR_+$, where $W^*$ stands for the space of continuous trajectories from $\IR$ into $\IR^d$, tending to infinity at plus and minus infinite times, modulo time-shift. The intensity measure of this Poisson point process is the product of a certain $\sigma$-finite measure $\nu$ on $W^*$ (see Theorem 2.2 of \cite{Szni13a}), with the Lebesgue measure $d \alpha$ on $\IR_+$. Informally, this point process corresponds to a cloud of doubly-infinite trajectories modulo time-shift having each a non-negative label (the $\IR_+$-component of $W^* \times \IR_+$). The Poisson point process is defined on a certain canonical space $(\Omega, \cA, \IP)$, see (2.23) of \cite{Szni13a}. We collect below some properties of Brownian interlacements, which we will use here.

\medskip
Given $K \subset \subset \IR^d$, $\alpha \ge 0$, and $\o \in \Omega$, one considers the point measure on $W_+$, denoted by $\mu_{K,\alpha}(\o)$, which collects for all bilateral trajectories with label at most $\alpha$, which enter $K$ at some point, their forward trajectories after their first entrance time in $K$. Then, see (2.25) of \cite{Szni13a},
\begin{align}
&\mbox{$\mu_{K,\alpha}$ is a Poisson point process on $W_+$ with intensity measure $\alpha P_{e_K}$, where}\label{1.20}
\\[1ex]
& \mbox{$e_K(dy)$ stands for the equilibrium measure of $K$}. \label{1.21}
\end{align}

\medskip\n
The equilibrium measure of $K$ is a finite measure concentrated on $\partial K$, and its total mass is called the capacity of $K$, see pp.~58, 61 of \cite{Szni98a}. Moreover,
\begin{equation}\label{1.22}
\langle e_K, Gf\rangle = \langle 1,f \rangle \;\; \mbox{if $f \in L^\infty_c(\IR^d)$ vanishes outside $K$}.
\end{equation}

\n
One also introduces the occupation-time measure $\cL_\alpha(\o)$ of Brownian interlacements at level $\alpha \ge 0$ in the cloud $\omega$. It is the Radon measure on $\IR^d$, which to each $A \in \cB(\IR^d)$ gives a mass equal to the total time spent in $A$ by all trajectories modulo time-shift with label at most $\alpha$ in the cloud $\omega$. In particular, when $V \in L^\infty_c(\IR^d)$ vanishes outside $K \subset \subset \IR^d$, one has for $\alpha \ge 0$, and $\o \in \Omega$,
\begin{equation}\label{1.23}
\big\langle \cL_\alpha(\o), V \big\rangle = \big\langle \mu_{K,\alpha}(\o), f_V\big\rangle, \; \mbox{where} \; f_V(w) = \dis\int^\infty_0 V\big(X_s(w)\big)ds, \; \mbox{for} \;w \in W_+ .
\end{equation}

\n
The intensity measure of $\cL_\alpha$ equals $\alpha \,dy$, cf.~(2.38) of \cite{Szni13a}, that is
\begin{equation}\label{1.24}
\IE\big[ \big\langle \cL_\alpha, V \big\rangle\big] = \alpha \dis\int_{\IR^d} V \,dy, \; \mbox{for all} \; V \in L^\infty_c(\IR^d).
\end{equation}

\n
Moreover, $\cL_\alpha$ has an important scaling property, see (2.43) of \cite{Szni13a},
\begin{equation}\label{1.25}
\cL_{\lambda^{2-d} \alpha} \stackrel{\rm law}{=} \lambda^2 h_\lambda \circ \cL_\alpha, \; \mbox{for} \; \lambda > 0,
\end{equation}

\n
where $h_\lambda \circ \cL_\alpha$ stands for the image of $\cL_\alpha$ under the homothety of ratio $\lambda$ on $\IR^d$.

\medskip
One also has an expression for the Laplace transform of $\cL_\alpha$ in the ``neighborhood of the origin'' (see Proposition 2.6 of \cite{Szni13a}). Namely for $V \in L^\infty_c(\IR^d)$ such that $\|G\, |V|\,\|_\infty < 1$, one has
\begin{equation}\label{1.26}
\IE[e^{\langle \cL_\alpha, V\rangle}] =  \exp\{\alpha \langle V, (I-GV)^{-1} 1\rangle\}
\end{equation}

\n
(the assumption on $V$ ensures that $I-GV$ operating on $L^\infty(\IR^d)$ has a bounded inverse). In the next section we will derive identities that bypass the smallness assumption on $V$ in (\ref{1.26}), and remain true even when the left-hand side of (\ref{1.26}) explodes.

\medskip
We close this section with a lemma about Poisson point processes, which will be helpful in the next section. We consider a measurable space $(E, \cE)$.

\begin{lemma}\label{lem1.4}
Let $\mu$ be a Poisson point process on $E$ with finite intensity measure $\eta$ (i.e. $\eta(E) < \infty$), and let $\Phi$: $E \r \IR$ be a measurable function. Then, one has
\begin{equation}\label{1.27}
\IE[e^{\langle \mu, \Phi \rangle}] = \exp\Big\{\dis\int_E(e^\Phi - 1)d\eta\Big\}
\end{equation}

\n
(this is an identity between numbers in $(0,+ \infty]$).
\end{lemma}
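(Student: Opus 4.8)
The plan is to reduce the identity to the classical formula for the Laplace functional of a Poisson point process with \emph{finite} intensity, treating the two sides as elements of $(0,+\infty]$ throughout. First I would recall the standard description of such a $\mu$: conditionally on the total number of points $\{\langle\mu,1\rangle = k\}$, which is Poisson-distributed with parameter $\eta(E)$, the $k$ points are i.i.d.\ with common law $\eta/\eta(E)$. (If $\eta \equiv 0$ the statement is trivial, both sides being $1$, so assume $\eta(E)>0$.) Since $\Phi$ is only assumed measurable, not bounded or integrable, $\langle\mu,\Phi\rangle = \sum_i \Phi(x_i)$ is an a.s.\ finite sum of finitely many real numbers, hence well-defined, and $e^{\langle\mu,\Phi\rangle}$ is a well-defined $[0,\infty)$-valued random variable; its expectation is then a well-defined element of $[0,+\infty]$.

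Next I would compute the expectation by conditioning on $\langle\mu,1\rangle$. By the description above,
\begin{equation*}
\IE[e^{\langle\mu,\Phi\rangle}] = \sum_{k\ge 0} e^{-\eta(E)}\,\frac{\eta(E)^k}{k!}\,\IE\Big[\prod_{i=1}^k e^{\Phi(x_i)}\Big] = \sum_{k\ge 0} e^{-\eta(E)}\,\frac{\eta(E)^k}{k!}\,\Big(\frac{1}{\eta(E)}\int_E e^{\Phi}\,d\eta\Big)^{k},
\end{equation*}
where the interchange of expectation and the (finite) product is legitimate since the integrand is non-negative, and by independence of the $x_i$ the expectation of the product is the product of the expectations, each equal to $\int_E e^\Phi\, d\eta/\eta(E)$ (an element of $(0,+\infty]$). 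Summing the series — again a sum of non-negative terms, so Tonelli applies and there is no convergence issue — gives $\exp\{-\eta(E)\}\exp\{\int_E e^{\Phi}\,d\eta\} = \exp\{\int_E(e^\Phi-1)\,d\eta\}$, which is exactly (\ref{1.27}). Note $\int_E(e^\Phi-1)\,d\eta$ is well-defined in $(-\eta(E),+\infty]$ because $e^\Phi - 1 \ge -1$ and $\eta$ is finite, so the right-hand side is a well-defined element of $(0,+\infty]$; and the two sides agree whether or not they are finite.

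There is no serious obstacle here; the only point requiring a little care is the bookkeeping of extended-real-valued quantities, namely checking that every interchange of sum/integral/expectation involves only non-negative integrands so that Tonelli's theorem applies unconditionally, and that no indeterminate form $\infty - \infty$ arises (it does not, because the negative part of $\Phi$ contributes a factor bounded by $1$). Alternatively, one could first prove (\ref{1.27}) for $\Phi$ bounded above (where $\int e^\Phi d\eta < \infty$ and the classical Campbell/exponential formula applies directly, e.g.\ by the same conditioning argument with no extended-value subtleties), then pass to general $\Phi$ via monotone convergence applied to $\Phi\wedge M$ as $M\uparrow\infty$, using monotone convergence on both sides. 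I would present the direct conditioning computation as the main line, since it is self-contained and makes the $(0,+\infty]$ nature of the identity transparent.
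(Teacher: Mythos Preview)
Your argument is correct. The only point where one might pause is the manipulation of possibly infinite quantities, but you have handled this properly: all integrands are non-negative, so Tonelli applies throughout, and the split $\int_E(e^\Phi-1)\,d\eta = \int_E e^\Phi\,d\eta - \eta(E)$ is legitimate because the negative part of $e^\Phi-1$ has finite integral (bounded by $\eta(E)$).

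The paper's proof takes a different, though related, route. Rather than conditioning on the total number of points and computing directly, it truncates $\Phi$ from above by the step functions $\Phi_n = \sum_{k < n2^n} \frac{k}{2^n}\,1\{\frac{k}{2^n}\le \Phi < \frac{k+1}{2^n}\}$, which are bounded above by $n$ and increase to $\Phi$. For each $\Phi_n$ the identity is the classical Laplace functional formula (both sides finite), and one passes to the limit by monotone convergence on each side. You in fact mention this truncation-and-limit strategy as your alternative at the end. Your main line has the advantage of being entirely self-contained (it re-derives the classical formula rather than quoting it) and of making the extended-value bookkeeping explicit; the paper's approach has the advantage of cleanly isolating the ``new'' content (the passage to unbounded $\Phi$) from the classical case, at the cost of citing the latter.
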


\begin{proof}
Set $\Phi_n = \sum_{\IZ \ni k < n2^n} \; \frac{k}{2^n} \;1\{ \frac{k}{2^n} \le \Phi < \frac{k+1}{2^n}\}$, for $n \ge 1$, so that $\Phi_n$ is measurable, $(-\infty,n]$-valued, and $\Phi_n$ increases to $\Phi$, as $n \r \infty$. Then, for each $n$, classically, see for instance \cite{Resn87}, p.~130-134,
\begin{equation*}
\IE[e^{\langle \mu, \Phi_n\rangle}] = \exp\Big\{\dis\int_E(e^{\Phi_n}-1)d\eta\Big\} \;\mbox{($< \infty$, since $\Phi_n \le n$ and $\eta(E) < \infty$)},
\end{equation*}

\n
and (\ref{1.27}) follows by monotone convergence.
\end{proof}

\section{Laplace functional of occupation-time measures of Brownian interlacements}
\setcounter{equation}{0}

In this section, with the help of Schr\"odinger semi-groups techniques (in particular Lemma \ref{lem1.1} and Proposition \ref{prop1.2} of the previous section), we derive an identity for the Laplace functional of $\cL_\alpha$, see Theorem \ref{theo2.2}. This identity plays an important role for the identification of the rate function governing the large deviation principle for the occupation-time profile of Brownian interlacements in a large box, which we derive in the next section. We state two consequences of the basic identity (\ref{2.5}), see Corollaries \ref{cor2.4} and \ref{cor2.5}. In Remark \ref{rem2.3}, we discuss the corresponding identity one obtains in the case of continuous-time random interlacements on a transient weighted graph.

\medskip
We begin with the observation that the Laplace functional of  $\cL_\alpha$ naturally involves the gauge function. We consider $V\in L^\infty_c(\IR^d)$ and recall that $\gamma_V$ stands for the gauge function, see (\ref{1.9}). When $V$ vanishes on $K^c$, with $K \subset \subset \IR^d$, as in (\ref{1.23}), we  can express $\langle \cL_\alpha, V\rangle$ in terms of the Poisson point process $ \mu_{K,\alpha}$ introduced above (\ref{1.20}), and use Lemma \ref{lem1.4} to compute the exponential moment of $\langle \cL_\alpha, V\rangle$. We find
\begin{equation}\label{2.1}
\begin{split}
\IE[e^{\langle \cL_\alpha, V\rangle}] & \stackrel{(\ref{1.23})}{=} \IE[e^{\langle \mu_{K,\alpha}, f_V\rangle}] \stackrel{(\ref{1.20}), ( \ref{1.27})}{=} e^{\alpha \langle e_K,E_x[e^{\int^\infty_0 V(X_s) ds}] - 1\rangle}
\\[1ex]
& \stackrel{(\ref{1.9})}{=} e^{\alpha \langle e_K, \gamma_V - 1\rangle}, \; \mbox{for $\alpha \ge 0$}
\end{split}
\end{equation}
(note that this is an identity between numbers in $(0,\infty]$, and there is no smallness assumption on $V$).

\begin{remark}\label{rem2.1} \rm It is well-known that the gauge function $\gamma_V$ (and hence the left-hand side of (\ref{2.1})) may very well be infinite, see for instance \cite{Pins95}, p.~227, and pp.~166, 167. This feature complicates the study of the Laplace transform of $\cL_\alpha$. When (\ref{1.6}) (or equivalently (\ref{1.10})) holds, we know from (\ref{1.11}), (\ref{1.12}) that $\gamma_V$ is bounded continuous and $\gamma_V - 1 = GV \gamma_V$. By (\ref{1.22}) we thus find that
\begin{equation}\label{2.2}
\IE[e^{\langle \cL_\alpha,V\rangle}] = e^{\alpha \langle V,\gamma_V\rangle}, \; \mbox{for $\alpha \ge 0$, when (\ref{1.6}) holds}. 
\end{equation}

\n
In particular, when $\|G |V|\,\|_\infty < 1$, then (\ref{1.10}) holds (see for instance (2.6) in \cite{Szni11c}), and $I-GV$ operating on $L^\infty(\IR^d)$ has a bounded inverse. So, by the first equality in (\ref{1.12})$, \gamma_V = (I-GV)^{-1} 1$, and we find $\langle V, \gamma_V\rangle = \langle V, (I-GV)^{-1} 1\rangle$. In this fashion we recover (\ref{1.26}) out of (\ref{2.2}) when $\|G |V|\,\|_\infty < 1$. \hfill $\square$
\end{remark}

We introduce a notation for the logarithm of the Laplace functional of $\cL_1$. For $V \in L^\infty_c(\IR^d)$, we set
\begin{equation}\label{2.3}
\Lambda(V) =  \mbox{\f $\dis\frac{1}{\alpha}$} \;\log \IE[e^{\langle \cL_\alpha,V\rangle}] \in (-\infty, + \infty] \; \mbox{(and $\alpha > 0$ is arbitrary by (\ref{2.1})}).
\end{equation}
We also recall from (\ref{0.9}) the notation
\begin{equation}\label{2.4}
\Gamma(V) = \dis\int_{\IR^d} V \,dy + \sup\limits_{\varphi \in L^2(\IR^d)} \{ 2 \langle V, \varphi \rangle + \langle V \varphi, \varphi \rangle - \cE(\varphi,\varphi)\},
\end{equation}

\n
where $\cE(\varphi,\varphi)$ denotes as in (\ref{0.10}) the Dirichlet form attached to the Brownian semi-group acting on $L^2(\IR^d)$ (i.e. $(R_t^{V=0})_{t \ge 0}$ in the notation of (\ref{1.11})), see for instance \cite{Szni98a}, p.~26. Further, we will also use the notation $\cE(\varphi,\varphi)$, when $\varphi$ belongs to the extended Dirichlet space $\cF_e$ (consisting of functions that are a.e.\,limits of sequences in $H^1(\IR^d)$ that are Cauchy for $\cE(\cdot,\cdot)$), see Chapter 1 \S5 of \cite{FukuOshiTake94}.

\medskip
We are now ready for the key identity of this section. The arguments we use are general and easily adapted to the context of continuous-time random interlacements on transient weighted graphs, as explained in Remark \ref{rem2.3} below.

\begin{theorem}\label{theo2.2}
\begin{equation}\label{2.5}
\Lambda(V) = \Gamma(V), \;\; \mbox{for all}\;\;  V \in L^\infty_c(\IR^d).
\end{equation}
\end{theorem}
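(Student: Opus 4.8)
The plan is to establish the identity $\Lambda(V)=\Gamma(V)$ in two stages. First I would prove it under the extra assumption that $V$ satisfies the Gauge Condition \eqref{1.10} (equivalently \eqref{1.6}), in which case $\Lambda(V)<\infty$ and the combinatorial identity \eqref{2.2} gives $\Lambda(V)=\langle V,\gamma_V\rangle$. Here I would invoke the Schr\"odinger semi-group machinery: when \eqref{1.8} holds, the quadratic form $\varphi\mapsto 2\langle V,\varphi\rangle+\langle V\varphi,\varphi\rangle-\cE(\varphi,\varphi)$ on $L^2(\IR^d)$ is maximized at the solution $\psi$ of the Euler--Lagrange equation $(-\tfrac12\Delta-V)\psi=V$, i.e. $\psi=G_VV=\gamma_V-1$ by \eqref{1.12}. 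Substituting $\varphi=\psi$ and simplifying via $\langle V,\psi\rangle+\langle V\psi,\psi\rangle-\cE(\psi,\psi)=\langle V,\psi\rangle$ (which follows from testing the equation against $\psi$) yields the supremum equals $\langle V,\gamma_V-1\rangle+\langle V,1\rangle$... let me be careful: one gets $\sup = \langle V,\psi\rangle+\langle V,1\rangle$ after combining terms, so $\Gamma(V)=\langle V,1\rangle+\langle V,\psi\rangle = \langle V,\gamma_V\rangle=\Lambda(V)$. The point needing care is that $\psi=\gamma_V-1$ lies in the extended Dirichlet space $\cF_e$ and is a legitimate competitor, and that it is genuinely the maximizer — this uses that the form $\cE-\langle V\cdot,\cdot\rangle$ is nonnegative (coercive) precisely under subcriticality \eqref{1.8}, so the functional is concave with a unique critical point.

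Second, I would handle a general $V\in L^\infty_c(\IR^d)$, where both sides may be $+\infty$, by an approximation-plus-monotonicity argument. Write $V=V_+-V_-$ and consider $V^{(t)}=tV_+-V_-$ for $t\in[0,1]$, or more simply approximate $V$ from below by truncations $V_n\uparrow V$ with $V_n\le V$. If $V$ fails the Gauge Condition, I must show $\Gamma(V)=+\infty$ as well; this follows because the Super Gauge Theorem (Theorem 2.17 of \cite{Chen02}, used already in Lemma \ref{lem1.3}) implies that failure of the gauge condition forces the form $\cE(\varphi,\varphi)-\langle V\varphi,\varphi\rangle-2\langle V,\varphi\rangle$ to be unbounded below along a suitable sequence — concretely, subcriticality fails, so there is $\varphi_0\in H^1$ with $\langle V\varphi_0,\varphi_0\rangle>\cE(\varphi_0,\varphi_0)$, and then scaling $\varphi=s\varphi_0$ and sending $s\to\infty$ drives $2\langle V,\varphi\rangle+\langle V\varphi,\varphi\rangle-\cE(\varphi,\varphi)\to+\infty$. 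Conversely, when $V$ does satisfy \eqref{1.10} the first stage applies directly and there is nothing more to do. For the monotone approximation in the finite case, $\Lambda(V_n)\uparrow\Lambda(V)$ by monotone convergence in \eqref{2.1}, and $\Gamma(V_n)\to\Gamma(V)$ by the approximation Lemma \ref{lem1.3} together with continuity of the variational expression, so the identity passes to the limit.

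The main obstacle I anticipate is the rigorous identification of the supremum in \eqref{2.4} with $\langle V,\gamma_V\rangle$ when $V$ is merely bounded with compact support and $V_+\neq 0$: one must justify (i) that $\psi=G_VV\in\cF_e$ and not merely in some weighted space, (ii) that $\langle V\psi,\psi\rangle$ is finite and the integration-by-parts identity $\cE(\psi,\varphi)=\langle V\psi,\varphi\rangle+\langle V,\varphi\rangle$ holds for all $\varphi\in\cF_e$ (a weak-formulation statement about $G_V$ as the resolvent of the Schr\"odinger operator), and (iii) that concavity of the functional genuinely holds, i.e. the bilinear form $\cE(\cdot,\cdot)-\langle V\,\cdot,\cdot\rangle$ is positive semidefinite on $\cF_e$ — this is exactly the analytic content of subcriticality and is where Lemma \ref{lem1.1}, via the spectral argument showing $dE_{\varphi,\varphi}$ charges only $[0,\infty)$, does the work. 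Once concavity is in hand, the critical point is automatically the global max and the computation closes. I would also double-check the borderline case $V_+\equiv 0$ separately, where $\gamma_V\le 1$, $\Lambda(V)=\langle V,\gamma_V\rangle$ trivially, and the variational problem is manifestly concave with maximizer $G_VV$, so no subtlety arises.
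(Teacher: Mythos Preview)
Your Stage~1 strategy is reasonable and close in spirit to the paper's, but there is a genuine gap in Stage~2 at the \emph{critical} case.

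\medskip
\textbf{Stage 2 gap.} Your argument that ``gauge fails $\Rightarrow$ there exists $\varphi_0\in H^1$ with $\langle V\varphi_0,\varphi_0\rangle>\cE(\varphi_0,\varphi_0)$'' is not justified. Failure of the Gauge Condition \eqref{1.10} means only that $\tfrac12\Delta+V$ is not subcritical; it may be \emph{critical}, in which case the quadratic form $\cE(\cdot,\cdot)-\langle V\cdot,\cdot\rangle$ is still nonnegative on $H^1(\IR^d)$ and no such $\varphi_0$ exists, so your scaling argument $s\varphi_0\to+\infty$ collapses. (Concretely: for $V=\lambda W$ with $W\ge 0$ compactly supported, the threshold $\lambda_*$ where gauge first fails is exactly the critical coupling; at $\lambda=\lambda_*$ the form is nonnegative by continuity, yet gauge fails.) The Super Gauge Theorem you cite goes the other way (gauge for $V$ implies gauge for $pV$, $p>1$) and does not help here. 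The paper does not attempt this direct implication; it argues the contrapositive: assuming $\Gamma(V)<\infty$, it first deduces that the form is nonnegative (so $R^V_t$ are contractions), and then --- this is the non-obvious part --- interpolates via $V_\lambda=V-\lambda V_+$, shows $\Gamma(V_\lambda)<\infty$ for all $\lambda\in[0,1]$ by a Cauchy--Schwarz estimate, verifies that $V_\lambda$ satisfies \eqref{1.6} for $\lambda\in(0,1]$ by testing the variational problem against $V_-$ (using that $V_-\not\equiv 0$), and finally passes to $\lambda\downarrow 0$ using monotone convergence for $\Lambda$ and convexity/lower semicontinuity (hence continuity on $[0,1]$) for $\Gamma$. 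The critical case is thus handled \emph{implicitly}: the argument shows that $\Gamma(V)<\infty$ forces \eqref{1.6}, so at criticality $\Gamma(V)=+\infty$, without ever producing a direction along which the pure quadratic form is negative.

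\medskip
\textbf{Stage 1 caveat.} Your proposed maximizer $\psi=G_VV=\gamma_V-1$ decays like $|y|^{2-d}$ at infinity (since $V\gamma_V\in L^\infty_c$), hence $\psi\notin L^2(\IR^d)$ when $d=3$ or $d=4$. Since the supremum defining $\Gamma(V)$ in \eqref{2.4} is taken over $L^2(\IR^d)$, you cannot simply plug in $\psi$; working in $\cF_e$ requires first showing that the supremum over $L^2$ and over $\cF_e$ coincide. The paper's $\varepsilon$-regularization circumvents this cleanly: for each $\varepsilon>0$ the semigroup $R^{V-\varepsilon}_t$ has a spectral gap, $G_{V-\varepsilon}V\in L^2(\IR^d)$, and a cited lemma identifies $\sup_{\varphi\in L^2}\{2\langle V,\varphi\rangle+\langle(V-\varepsilon)\varphi,\varphi\rangle-\cE(\varphi,\varphi)\}=\langle V,G_{V-\varepsilon}V\rangle$ directly; one then lets $\varepsilon\downarrow 0$ via dominated convergence using \eqref{1.7}. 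This is simpler and more robust than verifying by hand that your Euler--Lagrange critical point is a global maximizer over the correct space.
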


\begin{proof} We first assume $\Lambda(V) < \infty$ and show the identity (\ref{2.5}). By (\ref{2.1}), we know that (\ref{1.10}), and hence (\ref{1.6}) hold. By (\ref{2.2}) we have
\begin{equation}\label{2.6}
\Lambda (V) = \langle V, \gamma_V\rangle \stackrel{(\ref{1.12})}{=} \langle V, 1\rangle + \langle V, G_V V\rangle.
\end{equation}
Similarly, we have
\begin{align}
\Gamma(V) & = \langle V,1\rangle + \Psi(V), \; \mbox{where} \label{2.7}
\\[1ex]
\Psi(V) & = \sup\limits_{\varphi \in L^2(\IR^d)} \{ 2 \langle V,\varphi \rangle + \langle V \varphi, \varphi\rangle - \cE(\varphi, \varphi)\} \nonumber
\\ 
& = \sup\limits_{\varphi \in L^2(\IR^d)} \sup\limits_{\ve > 0} \{2 \langle V, \varphi \rangle + \langle (V-\ve) \varphi, \varphi \rangle - \cE(\varphi, \varphi)\} \nonumber
\\[1ex]
& = \sup\limits_{\ve > 0} \; \sup\limits_{\varphi \in L^2(\IR^d)} \{ 2 \langle V, \varphi \rangle + \langle (V - \ve)\varphi, \varphi \rangle - \cE(\varphi,\varphi)\}.\nonumber
\end{align}

\n
We know by (\ref{1.8}) that the Schr\"odinger semi-group $(R^V_t)_{t \ge 0}$ is a semi-group of self-adjoint contractions on $L^2(\IR^d)$. Its quadratic form is $\cE(\varphi,\varphi) - \langle V \varphi,\varphi \rangle$, see for instance \cite{Chen02}, p.~4654. Then, by Lemma 4.4, p.~22 of \cite{Szni98a}, and also below (4.10), p.~23 of the same reference, we have
\begin{equation}\label{2.8}
\sup\limits_{\varphi \in L^2(\IR^d)} \{2 \langle V, \varphi \rangle + \langle (V-\ve)\varphi,\varphi \rangle - \cE(\varphi,\varphi)\} = \langle V, G_{V-\ve}V\rangle,
\end{equation}

\n
where $G_{V - \ve}$ is defined as in (\ref{1.4}) with $V$ replaced by $V-\ve$. Note that the left-hand side of (\ref{2.8}) is a decreasing function of $\ve$, so the same holds for the right-hand side. As a result, we find that
\begin{equation}\label{2.9}
\Psi(V) = \sup\limits_{\ve > 0} \langle V,G_{V-\ve}V\rangle = \lim\limits_{\ve \r 0} \langle V, G_{V-\ve}V\rangle = \langle V, G_V V\rangle,
\end{equation}

\n
using dominated convergence and (\ref{1.7}) in the last step. We have thus shown that
\begin{equation*}
\Gamma(V) \stackrel{(\ref{2.7})}{=} \langle V,1\rangle + \Psi (V)\stackrel{(\ref{2.9})}{=} \langle V,1\rangle + \langle V,G_V V\rangle \stackrel{(\ref{2.6})}{=} \Lambda(V),
\end{equation*}
that is, (\ref{2.5})  holds when $\Lambda(V) < \infty$.

\medskip
We now assume that $\Gamma(V) < \infty$, and show (\ref{2.5}). Since $\Gamma(V) < \infty$, we have 
\begin{equation}\label{2.10}
\sup\limits_{\varphi \in L^2(\IR^d)} \langle V \varphi, \varphi \rangle - \cE(\varphi,\varphi) \le 0
\end{equation}

\n
(otherwise the supremum in (\ref{2.4}) would be infinite). Since $\cE(\varphi,\varphi) - \langle V \varphi,\varphi\rangle$ is the quadratic form associated to the strongly continuous self-adjoint Schr\"odinger semi-group $(R^V_t)_{t \ge 0}$ on $L^2(\IR^d)$, it follows that $R^V_t,t \ge 0$, are contractions.

\medskip
We first discuss the case where $V_- = 0$ a.e.. If $V = 0$ a.e., then (\ref{2.5}) is immediate. Otherwise, $V_+$ is not a.e.\,equal to $0$. We then apply the same considerations as below (\ref{2.7}) to find that 
for $\ve>0$, $\sup_{\varphi \in L^2(\IR^d)} \{2 \langle V, \varphi \rangle + \langle (V-\ve)\varphi,\varphi \rangle - \cE(\varphi,\varphi)\} = \langle V, G_{V-\ve}V\rangle$, and that this quantity increases to $\sup_{\varphi \in L^2(\IR^d)} \{2 \langle V, \varphi \rangle + \langle V \varphi,\varphi \rangle - \cE(\varphi,\varphi)\}$, as $\ve$ tends to $0$. Coming back to the definition of $\Gamma(V)$ in (\ref{2.4}), we find that

\begin{equation}\label{2.11}
\infty > \Gamma(V) = \langle V,1\rangle + \lim\limits_{\ve \r 0} \langle V,G_{V-\ve}V \rangle = \langle V,1\rangle + \langle V,G_V V\rangle,
\end{equation}

\n
where we used monotone convergence in the last step. Hence, (\ref{1.6}) holds and $\Lambda(V) < \infty$, by (\ref{2.2}) and (\ref{1.11}). The identity (\ref{2.5}) follows from the first part of the proof.

\medskip
If instead $V_-$ is not a.e.\,equal to $0$, we define for $\lambda \in [0,1]$,
\begin{equation}\label{2.12}
V_\lambda = (1-\lambda) V-\lambda V_- = V - \lambda V_+ \;\big(\in L^\infty_c(\IR^d)\big),
\end{equation}

\n
so that $V_\lambda$ increases to $V$ as $\lambda$ decreases to $0$. Note that
\begin{align}
&2 \langle V, \varphi \rangle + \langle V \varphi, \varphi\rangle - \cE(\varphi,\varphi) \ge 2\langle V_\lambda, \varphi \rangle + \langle V_\lambda \varphi, \varphi \rangle - \cE(\varphi, \varphi) + A_\lambda, \; \mbox{where}\label{2.13}
\\[1ex]
&A_\lambda = \inf\limits_\varphi \lambda \{2 \langle V_+, \varphi\rangle + \langle V_+ \varphi,\varphi\rangle\} \stackrel{\rm Cauchy-Schwarz}{\ge} \lambda \inf\limits_{u \ge 0} \{ - 2\langle V_+,1\rangle^{\frac{1}{2}} u + u^2\} > - \infty \nonumber
\end{align}
(in the second line, we used the Cauchy-Schwarz Inequality in $L^2(V_{+} dy)$ to write that $\langle V_+, \varphi\rangle \ge -\langle V_+,1\rangle^{\frac{1}{2}} \langle V_+\varphi,\varphi\rangle^{\frac{1}{2}}$, and took a lower bound over $u=\langle V_+\varphi,\varphi\rangle^{\frac{1}{2}}$).

\medskip\n
Since $\Gamma(V) < \infty$ by assumption, we see that
\begin{equation}\label{2.14}
\Gamma(V_\lambda) < \infty, \; \mbox{for} \; \lambda \in [0,1].
\end{equation}
Moreover, for $0 < \lambda \le 1$, we have
\begin{align}
&2 \langle V, \varphi \rangle + \langle V \varphi, \varphi\rangle - \cE(\varphi,\varphi) \ge -2\langle V_-, \varphi \rangle + \langle V_\lambda \varphi, \varphi \rangle - \cE(\varphi, \varphi) + B_\lambda, \; \mbox{where} \label{2.16}
\\[1ex]
&B_\lambda = \inf\limits_\varphi 2 \langle V_+, \varphi \rangle + \lambda \langle V_+ \varphi,\varphi \rangle  > - \infty, \;\mbox{by a similar argument as below (\ref{2.13})}. \nonumber
\end{align}
This shows that
\begin{equation}\label{2.17}
\infty > \sup\limits_{\varphi \in L^2(\IR^d)} \{ - 2 \langle V_-, \varphi \rangle + \langle V_\lambda \varphi, \varphi\rangle - \cE(\varphi,\varphi)\}.
\end{equation}

\n
In addition, by (\ref{2.14}) and the argument below (\ref{2.10}), the Schr\"odinger semi-group $(R^{V_\lambda}_t)_{t \ge 0}$ is a strongly continuous semi-group of self-adjoint contractions on $L^2(\IR^d)$. From the argument below (\ref{2.7}), we see that the above supremum in (\ref{2.17}) equals $\lim_{\ve \r 0} \langle V_-, G_{V_\lambda - \ve} V_-\rangle = \langle V_-, G_{V_\lambda} V_-\rangle$ (using monotone convergence for the last equality), and this quantity is finite. Hence, $V_\lambda$ satisfies (\ref{1.6}) and (see below (\ref{2.11})) $\Lambda (V_\lambda) < \infty$. So, by the first part of the proof, 
\begin{equation}\label{2.18}
\Lambda(V_\lambda) = \Gamma(V_\lambda) ( < \infty), \; \mbox{for} \; 0 < \lambda \le 1.
\end{equation}

\medskip\n
By monotone convergence in (\ref{2.3}), we see that
\begin{equation}\label{2.19}
\lim\limits_{\lambda \downarrow 0} \Lambda(V_\lambda) = \Lambda (V).
\end{equation}

\n
On the other hand, by (\ref{2.4}) and (\ref{2.12}), $\Gamma(V_\lambda)$ is a supremum of affine functions of $\lambda \in [0,1]$, which is finite when $\lambda = 1$ (since $V_{\lambda = 1} \le 0$) and when $\lambda = 0$ (by assumption). Hence, it is a convex, lower semi-continuous, finite function on $[0,1]$, which is therefore continuous, so that
\begin{equation}\label{2.20}
\lim\limits_{\lambda \r 0} \Gamma(V_\lambda) = \Gamma (V).
\end{equation}

\medskip\n
This implies that (\ref{2.5}) holds and completes the proof of Theorem \ref{theo2.2}.
\end{proof}

\begin{remark}\label{rem2.3} \rm The proof of Theorem \ref{theo2.2} is easily adapted to the case of continuous time random interlacements on a transient weighted graph $E$. In this set-up, see for instance Section 1 of \cite{Szni12b}, one has a countable, locally finite, connected graph, with vertex set $E$, endowed with non-negative symmetric weights $c_{x,y} = c_{y,x}$, for $x,y \in E$, which are positive exactly when $x,y$ are distinct and $\{x,y\}$ is an edge of the graph. The induced continuous time random walk is assumed to be transient. It has exponential holding times of parameter $1$, and its discrete skeleton has transition probability
\begin{equation}\label{2.21}
p_{x,y} = \dis\frac{c_{x,y}}{\lambda_x}, \;\; \mbox{where}\;\;\lambda_x = \dsl_{z \in E} c_{x,z}, \;\; \mbox{for}\;\; x,y \in E.
\end{equation}

\n
The continuous time random interlacements on the weighted graph can now be defined as a Poisson point process on a space of doubly-infinite $E$-valued trajectories, tending to infinity at plus and minus infinite times, marked by their duration at each step, modulo time-shift, see Section 1 of \cite{Szni12b}. The field of occupation-times at level $u \ge 0$ in $x \in E$ corresponds to 
\begin{equation}\label{2.22}
\begin{split}
L_{x,u}(\o) = \mbox{\f $\dis\frac{1}{\lambda_x}$} \times &\; \mbox{the total duration spent at $x$ by trajectories modulo}
\\[-1ex]
&\; \mbox{time-shift, with label at most $u$ in the cloud $\omega$}
\end{split}
\end{equation}

\n
(durations of successive steps of a trajectory are described by independent exponential variables of parameter $1$, but occupation times at $x$ get rescaled by $\lambda_x^{-1}$).

\medskip
In this set-up, one introduces for $V$: $E \r \IR$, finitely supported
\begin{align}
\Lambda(V)  = &\; \mbox{\f $\dis\frac{1}{u}$} \;\ov{\IE} \Big[\exp\Big\{\dsl_{x \in E} L_{x,u} V(x)\Big\}\Big] \; \mbox{(this does not depend on $u > 0$ by the}  \label{2.23}
\\
&\;\mbox{corresponding calculation to (\ref{2.1})), and} \nonumber
\\[2ex]
\Gamma(V)  =&\; \langle V,1\rangle + \sup\limits_{\varphi \in C_c(E)} \big\{2\langle V,\varphi \rangle + \langle V \varphi,\varphi\rangle - \cE(\varphi,\varphi)\big\}, \label{2.24}
\end{align}

\n
where $\langle f,g\rangle$ stands for $\sum_{x \in E} f(x)\,g(x)$ whenever the sum converges absolutely, $1$ denotes the constant function equal to $1$ on $E$, $C_c(E)$ stands for the set of finitely supported functions on $E$, and $\cE$ for the Dirichlet form
\begin{equation}\label{2.25}
\cE(f,f) = \fr \;\dsl_{x,y \in E} c_{x,y} \big(f(y) - f(x)\big)^2 \,(\in [0,\infty]) \; \mbox{for $f$: $E \r \IR$}.
\end{equation}

\n
Let us mention that one can also replace $C_c(E)$ in (\ref{2.24}) by the extended space of the Dirichlet form $\cE$ (see Chapter 1 \S 5 of \cite{FukuOshiTake94}). The proof of Theorem \ref{theo2.2} adapted to the present context yields

\bigskip\n
{\bf Theorem 2.2'.} {\it
\begin{equation}\label{2.26}
\mbox{$\Lambda(V) = \Gamma(V)$, for all finitely supported $V$: $E \r \IR$}.
\end{equation}
}
\hfill $\square$
\end{remark}

We now derive some corollaries of Theorem \ref{theo2.2}, which will play an important role in the next section. As above (\ref{0.1}), $B$ is a closed box (i.e. a compact subset of $\IR^d$ that is the product of $d$ possibly different non-degenerate compact intervals in $\IR$), and $m_B$, as above (\ref{0.2}), the restriction of Lebesgue measure to $B$. We write $L^p(B)$ as a shorthand for $L^p(B,dm_B)$, when $1 \le p < \infty$. Given $\wt{\varphi} \in L^2(B)$, we introduce 
\begin{equation}\label{2.27}
\wt{\cE}_B(\wt{\varphi},\wt{\varphi}) = \inf\limits_{\varphi \in L^2(\IR^d)} \,\{\cE(\varphi,\varphi); \;\varphi = \wt{\varphi}\;  \mbox{~a.e.\,on} \;B\},
\end{equation}

\n
the so-called {\it trace Dirichlet form} on $B$, see \cite{FukuOshiTake94}, pp.~265, 266. We will often drop the subscript $B$, when this causes no ambiguity. One knows that $\wt{\cE}_B(\wt{\varphi}, \wt{\varphi})$ is finite precisely when $\wt{\varphi}$ is a.e.\,equal to the restriction to $B$ of a quasi-continuous function $\ov{\varphi}$ in the extended Dirichlet space $\cF_e$, and in this case $\varphi(y) = E_y[\ov{\varphi}(X_{H_B})$, $H_B < \infty]$ belongs to $\cF_e$ and $\widetilde{\cE}_B(\wt{\varphi},\wt{\varphi}) = \cE(\varphi,\varphi)$.

\medskip
Let us also note that $L^1_+(B) \stackrel{\rm def}{=} \{f \in L^1(B)$; $f \ge 0$, $m_B$-a.e.$\}$ is a closed convex subset of $L^1(B)$ (endowed with the norm topology).

\begin{corollary}\label{cor2.4}
For $V \in L^\infty_c (\IR^d)$ vanishing outside $B$, one has
\begin{align}
&\Lambda (V) = \sup\limits_{h \in L^1_+(B)} \Big\{ \dis\int_B Vh\, dm_B - \wt{\cE} (\sqrt{h} - 1, \sqrt{h} -1)\Big\}, \;\mbox{and} \label{2.28}
\\[1ex]
&h \in L^1_+(B) \longrightarrow \wt{\cE} (\sqrt{h} - 1, \sqrt{h} - 1) \in [0, + \infty] \;\mbox{is a convex lower} \label{2.29}
\\
&\mbox{semi-continuous function}. \nonumber
\end{align}
\end{corollary}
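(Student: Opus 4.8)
The plan is to derive \eqref{2.28} from Theorem~\ref{theo2.2} by rewriting the variational problem \eqref{2.4} for $\Gamma(V)$ (which equals $\Lambda(V)$) in terms of a density $h$ on $B$, and then to obtain \eqref{2.29} from general properties of trace Dirichlet forms. The main computational point is a change of variables $h = (\varphi+1)^2$ in the supremum defining $\Gamma(V)$. Since $V$ vanishes outside $B$, the quadratic-form part $2\langle V,\varphi\rangle + \langle V\varphi,\varphi\rangle = \langle V,(\varphi+1)^2 - 1\rangle = \int_B V(\varphi+1)^2\,dm_B - \int_B V\,dm_B$, which combines with the $\int_{\IR^d}V\,dy = \int_B V\,dm_B$ term in \eqref{2.4} to give exactly $\int_B V(\varphi+1)^2\,dm_B$. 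Thus
\begin{equation}\label{plan1}
\Gamma(V) = \sup\limits_{\varphi \in L^2(\IR^d)} \Big\{ \dis\int_B V(\varphi+1)^2 \,dm_B - \cE(\varphi,\varphi)\Big\}.
\end{equation}

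First I would argue that the supremum in \eqref{plan1} can be reduced to $\varphi$ with $\varphi + 1 \ge 0$ a.e. on $B$. Here one uses that replacing $\varphi$ by $\psi$ with $\psi + 1 = |\varphi+1|$ on $B$ (and chosen in $\cF_e$ appropriately, e.g. via the trace construction) does not decrease $\int_B V(\varphi+1)^2 dm_B$ since only $(\varphi+1)^2$ on $B$ enters, while $\cE$ does not increase because $\cE$ is a Dirichlet form and normal contractions (here $t \mapsto |t+1| - 1$) operate on it — more precisely, one should first pass to the trace form $\wt\cE$ on $B$, so that only the values of $\varphi$ on $B$ matter, then apply the Markovian property of $\wt\cE$. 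Concretely, since for any $\varphi \in L^2(\IR^d)$ one has $\cE(\varphi,\varphi) \ge \wt\cE(\varphi|_B,\varphi|_B)$ and conversely $\wt\cE(\wt\varphi,\wt\varphi) = \inf\{\cE(\varphi,\varphi);\varphi = \wt\varphi \text{ a.e. on }B\}$ by \eqref{2.27}, the supremum \eqref{plan1} equals $\sup_{\wt\varphi \in L^2(B)}\{\int_B V(\wt\varphi+1)^2 dm_B - \wt\cE(\wt\varphi,\wt\varphi)\}$. Then setting $\wt\varphi = \sqrt h - 1$ with $h = (\wt\varphi+1)^2 \in L^1_+(B)$, and noting that the map $\wt\varphi \mapsto |\wt\varphi + 1| - 1 $ is a normal contraction so that $\wt\cE(|\wt\varphi+1|-1, |\wt\varphi+1|-1) \le \wt\cE(\wt\varphi,\wt\varphi)$ while the $V$-term is unchanged, the supremum over all $\wt\varphi$ coincides with the supremum over $\wt\varphi \ge -1$, hence with the supremum over $h = (\wt\varphi+1)^2$ ranging over $L^1_+(B)$ (every $h \in L^1_+(B)$ with $\wt\cE(\sqrt h - 1,\sqrt h - 1) < \infty$ arises this way, and the terms with infinite trace form contribute $-\infty$). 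This yields \eqref{2.28}.

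For \eqref{2.29}, the cleanest route is to observe that \eqref{2.28} exhibits $h \mapsto \wt\cE(\sqrt h - 1, \sqrt h - 1)$ as related to a Legendre-type transform, but that only gives convexity of $\Lambda(V)$ in $V$, not of the form in $h$. Instead I would invoke a known fact about Dirichlet forms: for a (transient, regular) Dirichlet form the functional $h \mapsto \cE(\sqrt h, \sqrt h)$ on nonnegative functions is convex and lower semi-continuous — this is essentially the statement that $\sqrt{\cdot}$-type energies behave like a relative-entropy/Fisher-information functional; for the classical Dirichlet integral $\cE(\varphi,\varphi) = \frac12\int|\nabla\varphi|^2$ one has $\cE(\sqrt h,\sqrt h) = \frac18\int |\nabla h|^2/h$, which is jointly convex in $(h,\nabla h)$ and lower semi-continuous. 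The same holds for the trace form $\wt\cE$ since it is itself a Dirichlet form on $L^2(B)$. Subtracting the constant and using that translation by the constant $1$ and the relation $\sqrt h - 1$ change nothing about convexity/lower semicontinuity (as $\wt\cE(\sqrt h - 1, \sqrt h - 1) = \wt\cE(\sqrt h, \sqrt h)$ when $1 \in \cF_e$... but $1 \notin \cF_e$ for a transient form, so one must keep the difference and instead argue directly). I expect the main obstacle to be precisely this last point: establishing convexity and lower semi-continuity of $h \mapsto \wt\cE(\sqrt h - 1, \sqrt h - 1)$ rigorously, for which I would either cite the literature on Dirichlet forms and the functional $\int |\nabla\sqrt h|^2$ (e.g. via its representation as a supremum of affine functionals of $h$, noting $\wt\cE(\sqrt h - 1,\sqrt h-1) = \sup_{V}\{\int_B V h\, dm_B - \Lambda(V)\}$ by \eqref{2.28} and convex duality, since $\Lambda = \Gamma$ is convex in $V$ — and a supremum of affine functions is automatically convex and lower semi-continuous), or prove the duality directly. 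The duality route is attractive because it derives \eqref{2.29} as an immediate byproduct of \eqref{2.28}: the right-hand side of \eqref{2.28} is a supremum over $h$, so $\Lambda$ is convex l.s.c. in $V$; and inverting, $\wt\cE(\sqrt h - 1,\sqrt h - 1)$ is the convex conjugate of $\Lambda$ restricted to $L^\infty_c$ functions supported in $B$, hence convex and l.s.c. on $L^1(B)$, in particular on the closed convex set $L^1_+(B)$.
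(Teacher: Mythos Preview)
Your derivation of \eqref{2.28} is correct and follows the same route as the paper: Theorem~\ref{theo2.2} gives $\Lambda(V)=\Gamma(V)$; the algebraic identity $\langle V,1\rangle + 2\langle V,\varphi\rangle + \langle V\varphi,\varphi\rangle = \int_B V(1+\varphi)^2\,dm_B$ (since $V$ vanishes off $B$) recasts the variational problem; passing to the trace form via \eqref{2.27} reduces to $\wt\varphi\in L^2(B)$; and the normal contraction $\rho(u)=|1+u|-1$ (which is $1$-Lipschitz with $\rho(0)=0$) restricts the supremum to $\wt\varphi\ge-1$ a.e., so that $h=(1+\wt\varphi)^2$ gives a bijection with $L^1_+(B)$.

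Your argument for \eqref{2.29}, however, is circular. You correctly note at first that \eqref{2.28} only shows $\Lambda$ is the Legendre transform of $h\mapsto\wt\cE(\sqrt h-1,\sqrt h-1)$, which makes $\Lambda$ convex and lower semi-continuous in $V$ but says nothing directly about the $h$-functional. You then propose to ``invert'' and identify $\wt\cE(\sqrt h-1,\sqrt h-1)$ with the conjugate $\Lambda^*$. But Fenchel--Moreau only guarantees that $\Lambda^*$ equals the \emph{biconjugate} of $h\mapsto\wt\cE(\sqrt h-1,\sqrt h-1)$, i.e.\ its convex lower semi-continuous envelope; to know that this envelope coincides with the function itself you must already have \eqref{2.29}. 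So the ``duality route'' you end up favoring assumes exactly what it is meant to prove.

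The paper instead establishes \eqref{2.29} independently of \eqref{2.28}, by invoking an argument from Deuschel--Stroock (around (4.2.64), p.~135 of \cite{DeusStro89}) together with standard facts on Dirichlet forms from \cite{FukuOshiTake94}. The point is that convexity and lower semi-continuity of $h\mapsto\wt\cE(\sqrt h-1,\sqrt h-1)$ can be proved \emph{directly} from the structure of the (trace) Dirichlet form --- your Fisher-information heuristic is in the right spirit, but one needs an argument that copes with the shift by $-1$ (since $1\notin\cF_e$ in the transient case, as you rightly observe) and with an abstract trace form rather than the classical Dirichlet integral. Only once \eqref{2.29} is in hand independently does the duality you describe become legitimate; the paper then uses it to derive Corollary~\ref{cor2.5}.
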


\begin{proof}
We consider $V \in L^\infty_c(\IR^d)$ vanishing outside $B$. Then, by (\ref{2.5}) we have
\begin{equation}\label{2.30}
\begin{split}
\Lambda(V) &=  \sup\limits_{\varphi \in L^2(\IR^d)} \Big\{\dis\int V \,dy + 2 \dis\int V\varphi\,dy + \dis\int V \varphi^2 dy - \cE(\varphi,\varphi)\Big\}
\\[1ex]
& =  \sup\limits_{\varphi \in L^2(\IR^d)} \Big\{\dis\int V (1 + \varphi)^2 dy - \cE(\varphi,
\varphi)\Big\}
\\[1ex]
&\!\!\! \stackrel{(\ref{2.27})}{=}   \sup\limits_{\wt{\varphi} \in L^2(B)} \Big\{\dis\int_B V(1 + \wt{\varphi})^2 dm_B - \wt{\cE}(\wt{\varphi},\wt{\varphi})\Big\} \; \mbox{(since $V = 0$ outside $B$).}
\end{split}
\end{equation}

\n
Now, the function $\rho(u) = |1 + u| -1$ satisfies $\rho(0) = 0$ and $|\rho(u) - \rho(v)| \le |u - v|$, for $u,v \in \IR$, and by \cite{FukuOshiTake94}, pp.~4, 5, we have for $\wt{\varphi} \in L^2(B)$, $\wt{\psi} = \rho(\wt{\varphi})$, $\wt{\cE}\,(\wt{\psi},\wt{\psi}) \le \wt{\cE} (\wt{\varphi},\wt{\varphi})$, and $(1 + \wt{\psi})^2 = (1 + \wt{\varphi})^2$. As a result, we find that
\begin{equation}\label{2.31}
\Lambda(V) = \sup\Big\{\dis\int V(1 + \wt{\psi})^2 dm_B - \wt{\cE}(\wt{\psi},\wt{\psi}); \;\wt{\psi} \in L^2(B), \; 1 + \wt{\psi} \ge 0 \; \mbox{~a.e.}\Big\}
\end{equation}

\n
(the above argument shows that $\Lambda(V)$ is smaller or equal to the right-hand side of (\ref{2.31}), but by the last line of (\ref{2.30}), $\Lambda(V)$ is also bigger or equal to the right-hand side of (\ref{2.31})).

\medskip
Setting $h = (1 + \wt{\psi})^2$, we obtain a bijection between $\{\wt{\psi} \in L^2(B)$; $1 + \wt{\psi} \ge 0$, a.e.$\}$, and $L^1_+(B)$, and the claim (\ref{2.28}) follows. As for (\ref{2.29}), it is proved by a similar argument as in ii), below (4.2.64), p.~135 of \cite{DeusStro89} (see also Theorem 6.2.1 and (1.3.18), (1.4.8) in \cite{FukuOshiTake94}).
\end{proof}

The next corollary brings us a step closer to the identification of what will be the rate function of the large deviation principle, which we derive in the next section. We tacitly identify the set $L^\infty(B)$ of bounded measurable function on $B$ with the set $\{V \in L^\infty_c (\IR^d)$; $V = 0$ on $B^c\}$, and recall that (see below (\ref{0.1})) $C(B)$ stands for the space of continuous functions on $B$ (identified with the set of functions vanishing outside $B$, with continuous restriction to $B$).

\begin{corollary}\label{cor2.5}
For $h \in L^1_+(B)$, one has
\begin{equation}\label{2.32}
\begin{split}
\wt{\cE}_B(\sqrt{h} - 1, \sqrt{h} - 1) & = \sup\limits_{V \in L^\infty(B)} \Big\{\dis\int V h\, dm_B - \Lambda(V)\Big\}
\\[1ex]
& = \sup\limits_{V \in C(B)} \Big\{\dis\int V h\, dm_B - \Lambda (V)\Big\}.
\end{split}
\end{equation}
\end{corollary}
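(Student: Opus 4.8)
The plan is to recognize \eqref{2.32} as a Legendre--Fenchel duality statement and deduce it from Corollary \ref{cor2.4} together with the convexity and lower semicontinuity of the functional $h \mapsto \wt{\cE}_B(\sqrt{h}-1,\sqrt{h}-1)$ on $L^1_+(B)$ established in \eqref{2.29}. Indeed, \eqref{2.28} says precisely that, for $V \in L^\infty(B)$ (identified with those $V \in L^\infty_c(\IR^d)$ vanishing on $B^c$), the map $V \mapsto \Lambda(V)$ is the convex conjugate of $F(h) := \wt{\cE}_B(\sqrt{h}-1,\sqrt{h}-1)$, where the duality is between $L^1(B)$ (paired against $h$ via $h$) and $L^\infty(B)$. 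The content of \eqref{2.32} is the reverse statement, that $F$ equals the biconjugate of $\Lambda$ restricted to the relevant test class, which is exactly what the Fenchel--Moreau theorem delivers once we know $F$ is convex, lower semicontinuous (for the $L^1(B)$ norm topology), and proper, and once we extend $F$ by $+\infty$ to all of $L^1(B)$ while checking this extension remains lower semicontinuous (this uses that $L^1_+(B)$ is closed convex in $L^1(B)$, as noted just before Corollary \ref{cor2.4}).

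The key steps, in order, are as follows. First I would set $\Phi(h) := F(h)$ for $h \in L^1_+(B)$ and $\Phi(h) := +\infty$ otherwise, obtaining a convex, lower semicontinuous, $[0,+\infty]$-valued function on the Banach space $L^1(B)$ which is not identically $+\infty$ (it vanishes at $h \equiv 1$). Second, I would compute its convex conjugate on the dual $L^\infty(B)$: for $V \in L^\infty(B)$, $\Phi^*(V) = \sup_{h \in L^1(B)} \{\langle V,h\rangle - \Phi(h)\} = \sup_{h \in L^1_+(B)} \{\int_B Vh\,dm_B - F(h)\}$, which is exactly $\Lambda(V)$ by \eqref{2.28}. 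Third, by the Fenchel--Moreau (biconjugation) theorem, $\Phi = \Phi^{**}$, i.e.\ for every $h \in L^1(B)$ one has $\Phi(h) = \sup_{V \in L^\infty(B)} \{\langle V,h\rangle - \Phi^*(V)\} = \sup_{V \in L^\infty(B)} \{\int_B Vh\,dm_B - \Lambda(V)\}$; restricting to $h \in L^1_+(B)$ gives the first equality in \eqref{2.32}. Fourth, to pass from $L^\infty(B)$ to $C(B)$ in the supremum, I would argue that the supremum over $C(B)$ is no smaller than $F(h)$: for this it suffices to show that for any $V \in L^\infty(B)$ the quantity $\int_B V h\,dm_B - \Lambda(V)$ can be approximated from below by the same expression with $V$ replaced by a continuous function. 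One way is to approximate $V \in L^\infty(B)$ by $V_n \in C_c(\IR^d)$, uniformly bounded, with $V_n \to V$ in every $L^r$, $r < \infty$; then $\int_B V_n h\,dm_B \to \int_B V h\,dm_B$ by dominated convergence, while $\Lambda(V_n) \to \Lambda(V)$ follows from the representation $\Lambda(\cdot) = \langle \cdot, \gamma_\cdot\rangle$ in \eqref{2.2} together with the uniform convergence $\gamma_{V_n} \to \gamma_V$ from Lemma \ref{lem1.3} (applicable when $\Lambda(V) < \infty$, since then \eqref{1.6} holds; when $\Lambda(V) = +\infty$ one gets $\limsup_n \Lambda(V_n) \le \liminf$ issues, so one instead uses that $\sup_{C(B)}$ already dominates the $h \equiv 1$--normalized expressions and a separate monotone truncation argument). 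Conversely the $C(B)$-supremum is clearly $\le$ the $L^\infty(B)$-supremum since $C(B) \subseteq L^\infty(B)$, which closes the chain.

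The main obstacle I anticipate is the passage from the supremum over $L^\infty(B)$ to the supremum over $C(B)$, specifically controlling $\Lambda(V_n)$ when $\Lambda(V)$ may be $+\infty$: Lemma \ref{lem1.3} only gives continuity of the gauge function under the Gauge Condition, so the approximation argument is cleanest on the set where $\Lambda$ is finite, and one must handle the infinite case by a monotone/truncation device (e.g.\ truncating $h$, or using that $\Lambda$ is monotone under $V \mapsto V - \varepsilon$ and lower semicontinuous). A secondary technical point is verifying that the biconjugation theorem applies with the $(L^1, L^\infty)$ dual pairing rather than requiring the full topological dual of $L^1$; this is fine because $\Phi$ is convex and strongly lower semicontinuous on $L^1(B)$, hence weakly lower semicontinuous, and its conjugate on $(L^1)^* = L^\infty$ recovers it. Everything else---convexity, lower semicontinuity of $F$, the identification of $\Phi^*$ with $\Lambda$---is immediate from \eqref{2.28}, \eqref{2.29} and the closedness of $L^1_+(B)$.
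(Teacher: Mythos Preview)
Your approach is essentially identical to the paper's: the first equality is Fenchel--Moreau duality applied to the convex, lower semicontinuous, proper functional $\Phi$ (the paper cites this as the ``duality formula'' in Deuschel--Stroock or Dembo--Zeitouni), and the second equality is obtained by approximating a given $V\in L^\infty(B)$ by continuous $V_n$ with $\|V_n-V\|_{L^r}\to 0$, $\|V_n\|_\infty\le\|V\|_\infty$, and $V_n\to V$ a.e., then invoking Lemma~\ref{lem1.3} and \eqref{2.2} to get $\Lambda(V_n)\to\Lambda(V)$, together with dominated convergence for $\int V_n h\,dm_B$.

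Your anticipated obstacle concerning the case $\Lambda(V)=+\infty$ is not a real obstacle and the vague ``monotone truncation'' device you propose is unnecessary. If $\Lambda(V)=+\infty$, then $\int_B Vh\,dm_B-\Lambda(V)=-\infty$, so such $V$ contribute nothing to the supremum over $L^\infty(B)$; it therefore suffices to approximate only those $V$ with $\Lambda(V)<\infty$, which is exactly the hypothesis under which \eqref{1.6} holds and Lemma~\ref{lem1.3} applies. The paper makes this explicit by restricting from the outset to $V$ with $\Lambda(V)<\infty$ in the approximation step.
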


\medskip
\begin{proof}
Since $L^\infty(B,dm_B)$ is the dual of $L^1(B,dm_B)$, the first equality follows from (\ref{2.28}), (\ref{2.29}) and the duality formula in Theorem 2.2.15, p.~55 of \cite{DeusStro89}, or Lemma 4.5.8, p.~152 of \cite{DembZeit98}, together with the fact we can replace $L^\infty(B,dm_B)$ by $L^\infty(B)$, since the quantity under the supremum in the right-hand side of the first equality coincides for $V$ and $V'$, if $V = V'$, $m_B$-a.e.\,(recall (\ref{1.24})).

\medskip
The quantity on the last line of (\ref{2.32}) is obviously smaller or equal to the right-hand side of the first equality. Fix $h$ in $L^1_+(B)$. Our claim will thus follow, once we show that for any $V \in L^\infty(B)$ with $\Lambda(V) < \infty$ and $\ve > 0$, one can find $W \in C(B)$ such that
\begin{equation}\label{2.33}
\dis\int W h \,dm_B - \Lambda(W) \ge \dis\int V h\, dm_B - \Lambda(V) - \ve.
\end{equation}

\n
We thus pick $r > \frac{d}{2}$, as in Lemma \ref{lem1.3}, and choose $V_n \in C(B)$ (extended to be equal to $0$ on $B^c$) such that
\begin{equation}\label{2.34}
\left\{ \begin{array}{rl}
{\rm i)} &  \|V_n - V\|_{L^r(\IR^d)} \r 0, \hspace{8cm}
\\[2ex]
{\rm ii)} & V_n \r V\; \mbox{a.e.},
\\[2ex]
{\rm iii)}&  \|V_n\|_{L^\infty(\IR^d)} \le \|V\|_{L^\infty(\IR^d)} .
\end{array}\right.
\end{equation}

\medskip\n
The construction of such a sequence can be performed as follows: one first multiplies $V$ by the indicator function of a slightly smaller concentric box in $B$, and then uses convolution by a smoothing kernel to construct a sequence $V_n$ of continuous functions vanishing on $B^c$, for which i) and iii) hold. One obtains ii) by extracting a suitable subsequence. Then Lemma \ref{lem1.3} and (\ref{2.2}) ensure that $\Lambda(V_n) \underset{n}{\longrightarrow} \Lambda(V)$, and moreover $\int V_n h\, dm_B \underset{n}{\longrightarrow} \int Vh\, dm_B$, by dominated convergence. The claim (\ref{2.33}) follows, and Corollary \ref{cor2.5} is proved.
\end{proof}

\section{Large deviations for occupation-time profiles of  \newline Brownian interlacements}
\setcounter{equation}{0}

In this section, we derive a large deviation principle for the occupation-time profile of Brownian interlacements at level $\alpha > 0$ in a box $LB$, as $L \r \infty$, cf.~Corollary \ref{cor3.3}. Due to the scaling property (\ref{1.25}) of $\cL_\alpha$, this fixed level, large space problem (i.e.~$\alpha > 0$ fixed, and $L \r \infty$) is converted into a fixed space, large level problem (i.e.~$L$ fixed, and $\alpha \r \infty$), which can be handled via general Cram\'er theory, see Chapter 6 \S 1 of \cite{DembZeit98}, or Chapter 3 of \cite{DeusStro89}, making use of subadditivity, see Theorem \ref{theo3.2}. The identification of the rate function relies heavily on the results of the previous section, in particular on Corollary \ref{cor2.5}.

\medskip
Given a closed box $B$ in $\IR^d$ (see above (\ref{0.1})), we endow the space $M(B)$ of finite signed measures on $B$ with the weak topology generated by $C(B)$ (the space of continuous functions on $B$). The set $M_+(B)$ (of positive measures on $B$) is a closed convex subset of $M(B)$. We introduce the function on $M_+(B)$ (see (\ref{2.3}) for notation):
\begin{equation}\label{3.1}
I_B(\mu) = \sup\limits_{V \in C(B)} \Big\{\dis\int V \,d\mu - \Lambda(V)\Big\} \in [0,\infty], \; \mbox{for} \; \mu \in M_+(B),
\end{equation}

\n
and when there is no ambiguity, we simply write $I(\cdot)$ in place of $I_B(\cdot)$.

\medskip
As we  now see, $I_B(\cdot)$ is closely related to $I_v(\cdot)$, $v > 0$, in (\ref{0.3}).

\begin{lemma}\label{lem3.1}  ($B \subseteq B'$ closed boxes in $\IR^d$)
\begin{align}
&~\mbox{$I_B(\cdot)$ is a convex, good rate function (i.e.,~it is convex, lower semi-continuous,}  \label{3.2}
\\
&~\mbox{and has compact level sets).} \nonumber
\end{align}

\vspace{-5ex}
\begin{align}
I_B(\mu) \le &\; I_{B'}(\mu') \; \mbox{if $\mu$ is the restriction to $B$ of $\mu' \in M_+(B')$}.\label{3.3}
\\[2ex]
I_B(\mu) = & \; + \infty, \;\mbox{if $\mu \in M_+(B)$ is not absolutely continuous with respect to $m_B$} \label{3.4}
\\
 =& \; \inf\Big\{\fr \;\dis\int_{\IR^d} | \nabla \varphi|^2 dy; \;\varphi \in H^1(\IR^d), \; \varphi = \Big(\mbox{\f $\dis\frac{d \mu}{dm_B}$}\Big)^{\frac{1}{2}} - 1 \; \mbox{a.e.\,on} \;B\Big\}, \nonumber
\\
& \;\mbox{if $\mu$ is absolutely continuous with respect to $m_B$}.\nonumber
\\[2ex]
I_v(\cdot)  = & \;\mbox{$v\,I_B\Big(\dis\frac{\cdot}{v}\Big)$, for $v > 0$ (see (\ref{0.3}) for notation)}. \label{3.5}
\end{align}
\end{lemma}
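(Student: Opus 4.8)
The plan is to deduce all four assertions from the identity $\Lambda(V)=\Gamma(V)$ of Theorem \ref{theo2.2} together with Corollaries \ref{cor2.4} and \ref{cor2.5}. First I would establish \eqref{3.2}: as a supremum of the affine (hence continuous) functionals $\mu\mapsto\int V\,d\mu-\Lambda(V)$ over $V\in C(B)$, the function $I_B$ is automatically convex and lower semi-continuous on $M_+(B)$, and it is non-negative because $\Lambda(0)=0$ (take $V=0$). For the compactness of level sets I would exploit that $B$ is \emph{compact}: testing $I_B(\mu)$ against constant functions $V\equiv\pm t$ and using $\Lambda(\pm t)=\pm t\,m_B(B)$ (from \eqref{1.24} and \eqref{2.1}, since $\gamma_{\pm t\mathbf 1_B}$ is bounded) shows $|\mu(B)-m_B(B)|$ is controlled by $I_B(\mu)$, so $\{I_B\le a\}$ is a bounded, hence (by Banach--Alaoglu / Prokhorov on the compact $B$) relatively weakly-$\ast$ compact subset of $M_+(B)$; lower semi-continuity then closes it, giving a good rate function.

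Next, \eqref{3.3} is the easy monotonicity statement: if $\mu$ is the restriction of $\mu'\in M_+(B')$ to $B\subseteq B'$, then every $V\in C(B)$, extended by $0$ off $B$, lies in $C(B')$ and $\int V\,d\mu=\int V\,d\mu'$, while $\Lambda(V)$ is the same object in both cases (it is defined intrinsically on $L^\infty_c(\IR^d)$, independent of the ambient box). Taking the supremum over the smaller class $C(B)$ then yields $I_B(\mu)\le I_{B'}(\mu')$.

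The substantive step is \eqref{3.4}, the variational formula for $I_B$. By Corollary \ref{cor2.5}, for $h\in L^1_+(B)$ one has $\widetilde\cE_B(\sqrt h-1,\sqrt h-1)=\sup_{V\in C(B)}\{\int Vh\,dm_B-\Lambda(V)\}$, which is precisely $I_B(\mu)$ when $d\mu=h\,dm_B$; and by the description of the trace Dirichlet form recalled below \eqref{2.27}, $\widetilde\cE_B(\sqrt h-1,\sqrt h-1)=\inf\{\cE(\varphi,\varphi):\varphi\in\cF_e,\ \varphi=\sqrt h-1\text{ a.e.\ on }B\}$, and one checks this infimum is unchanged if we restrict $\varphi$ to $H^1(\IR^d)$ (since $H^1$ is dense in $\cF_e$ for the form norm and the trace map is continuous, any $\cF_e$-competitor can be approximated by $H^1$-competitors after correcting the boundary value on $B$ — this is the place where a little care is needed). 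This gives the stated formula for absolutely continuous $\mu$. For $\mu$ \emph{not} absolutely continuous with respect to $m_B$, I would show $I_B(\mu)=+\infty$ by producing test functions that detect the singular part: decompose $\mu=h\,m_B+\mu_s$ with $\mu_s\perp m_B$, $\mu_s\neq0$; since $\mu_s$ is concentrated on an $m_B$-null Borel set, one can choose continuous $V_n\ge0$ vanishing outside $B$ with $\int V_n\,dm_B\to0$ (so $\Lambda(V_n)\to0$ by the bound $0\le\Lambda(V_n)=\Gamma(V_n)$ and the estimate $\Psi(V)\le\langle V,G_VV\rangle$ for small $\|G|V|\|_\infty$, or more directly $\Lambda(V_n)\le\|V_n\|_\infty m_B(B)$ once $V_n$ is small) while $\int V_n\,d\mu_s\to\mu_s(\text{mass})>0$; then $\int V_n\,d\mu-\Lambda(V_n)\to\infty$. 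Finally \eqref{3.5} is a scaling identity: in the definition \eqref{3.1} substitute $\mu\rightsquigarrow\mu/v$ and use that, by \eqref{2.3} and the observation that $\Lambda$ does not depend on the level $\alpha$, one has $v\Lambda(V)=\log\IE[e^{\langle\cL_v,V\rangle}]$; comparing \eqref{3.1} written for $\mu/v$ with the definition \eqref{0.3} of $I_v$, and matching $\varphi\leftrightarrow\sqrt{d\mu/dm_B}-\sqrt v=\sqrt v(\sqrt{d(\mu/v)/dm_B}-1)$ so that the Dirichlet energies differ exactly by the factor $v$, yields $I_v(\mu)=v\,I_B(\mu/v)$. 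The main obstacle is the density/trace argument in \eqref{3.4} — reconciling the $\cF_e$-level infimum coming from the trace Dirichlet form with the $H^1(\IR^d)$-level infimum appearing in \eqref{0.3} — but this is exactly the content of the structure theory of extended Dirichlet spaces quoted from \cite{FukuOshiTake94}, so it should go through cleanly.
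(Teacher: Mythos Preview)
Your overall architecture is right --- everything should come from Theorem~\ref{theo2.2} and Corollaries~\ref{cor2.4}, \ref{cor2.5} --- but three of the four steps contain genuine gaps.

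\medskip
\textbf{On \eqref{3.2}.} The claim $\Lambda(\pm t\,1_B)=\pm t\,m_B(B)$ is false: \eqref{1.24} gives the \emph{mean} of $\langle\cL_\alpha,V\rangle$, not its log-Laplace transform. From \eqref{2.6} one has $\Lambda(V)=\langle V,1\rangle+\langle V,G_VV\rangle$ when the gauge condition holds, and the second term is strictly positive for $V=t\,1_B$, $t>0$. Worse, for large $t>0$ the gauge condition fails and $\Lambda(t\,1_B)=+\infty$. The paper's argument is simply to pick one small $\lambda>0$ with $\lambda\|G1_B\|_\infty<1$, so that $\Lambda(\lambda 1_B)<\infty$ by \eqref{1.26}, and then $I(\mu)\ge\lambda\,\mu(B)-\Lambda(\lambda 1_B)$ bounds $\mu(B)$ on each level set. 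Your idea is the right one; only the computation of $\Lambda$ is wrong.

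\medskip
\textbf{On \eqref{3.3}.} The extension-by-zero argument does not work as stated: a function $V\in C(B)$ (continuous on the closed box $B$) extended by $0$ to $B'$ is in general \emph{discontinuous} across $\partial B$, hence not in $C(B')$, so you are not comparing suprema over nested classes. The paper avoids this by proving \eqref{3.4} first: once you know both sides are $+\infty$ unless $\mu'$ (hence $\mu$) is absolutely continuous, and in the absolutely continuous case $I_B$ is given by the Dirichlet-energy infimum in \eqref{3.4}, the inequality $I_B(\mu)\le I_{B'}(\mu')$ is immediate because any competitor $\varphi\in H^1(\IR^d)$ matching $\sqrt{d\mu'/dm_{B'}}-1$ on $B'$ a~fortiori matches $\sqrt{d\mu/dm_B}-1$ on $B$.

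\medskip
\textbf{On \eqref{3.4}, singular case.} This is the main gap. Your test functions $V_n$ satisfy $\int V_n\,dm_B\to 0$ and $\int V_n\,d\mu_s\to\mu_s(B)$, a \emph{finite} positive number; if the $V_n$ are uniformly bounded (as they must be to have $\int V_n\,dm_B\to 0$ while $\int V_n\,d\mu_s$ stays positive), then $\int V_n\,d\mu$ is bounded, so $\int V_n\,d\mu-\Lambda(V_n)$ cannot tend to $+\infty$. Also, the bound $\Lambda(V_n)\le\|V_n\|_\infty\,m_B(B)$ is not valid. The missing idea is a \emph{rescaling}: the paper takes a compact $K\subset B$ with $\mu(K)>0$ and $m_B(K)=0$, builds $[0,1]$-valued $W_n\in C(B)$ equal to $1$ on $K$ with $\delta_n:=\|W_n\|_{L^r(\IR^d)}\to 0$ for some $r>\tfrac d2$, and then sets $V_n=\delta_n^{-1/2}W_n$. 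Now $\|V_n\|_{L^r}=\delta_n^{1/2}\to 0$, so Lemma~\ref{lem1.3} (plus \eqref{2.2}) gives $\Lambda(V_n)\to 0$, while $\int V_n\,d\mu\ge\delta_n^{-1/2}\mu(K)\to\infty$. The point is that $\Lambda$ is controlled by an $L^r$ norm with $r>\tfrac d2$, which allows the test functions to blow up in $L^\infty$ while $\Lambda$ still vanishes.

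\medskip
Your treatment of the absolutely continuous case of \eqref{3.4} and of \eqref{3.5} is fine and matches the paper.
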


\begin{proof}
We begin with (\ref{3.2}). The convexity and lower semi-continuity are direct consequences of (\ref{3.1}). Moreover, when $\lambda > 0$ is small enough so that $\lambda \| G1_B\|_{L^\infty(\IR^d)} < 1$, we know that $\Lambda (\lambda 1_B) < \infty$, see (\ref{1.26}), and hence,
\begin{equation}\label{3.6}
I(\mu) \ge \lambda \langle \mu,1_B\rangle - \Lambda (\lambda 1_B), \;\mbox{for $\mu \in M_+ (B)$}.
\end{equation}

\n
Therefore, for any $M \ge 0$, $\langle \mu, 1_B\rangle$ remains bounded on $\{ \mu \in M_+(B)$; $I(\mu) \le M\}$, which is therefore a compact subset of $M_+(B)$ (we already know it is closed by lower semi-continuity of $I$). This proves (\ref{3.2}).

\medskip
We now turn to (\ref{3.4}). The second line of (\ref{3.4}) is an immediate consequence of Corollary \ref{cor2.5} and (\ref{2.27}). To complete the proof of (\ref{3.4}), it thus suffices to show that $I(\mu) = \infty$ when $\mu$ is not absolutely continuous with respect to $m_B$.

\medskip
Indeed, for such a measure $\mu$, we can find a compact subset $K$ of $B$ such that $\mu(K) > 0$, but $m(K) = 0$. Then, consider for $n \ge 1$, $W_n(\cdot) = (1 - nd(\cdot,K))_+ \in C(B)$ (we set $W_n$ equal to zero on $B^c$), so that $W_n$ is $[0,1]$-valued, takes the values $1$ on $K$ and vanishes outside the open $\frac{1}{n}$-neighborhood of $K$ in $B$. Thus, choosing $r > \frac{d}{2}$, we see that $\delta_n = \|W_n\|_{L^r(\IR^d)} \underset{n}{\longrightarrow} 0$, and setting $V_n = \delta_n^{-\frac{1}{2}} W_n$, it follows by Lemma \ref{lem1.3} and (\ref{2.2}) that $\Lambda (V_n) \underset{n}{\longrightarrow} 0$. As a consequence, we see that  
\begin{equation}\label{3.7}
I(\mu) \stackrel{(\ref{3.1})}{\ge} \dis\int V_n d \mu - \Lambda(V_n) \ge \delta_n^{-\frac{1}{2}} \mu(K) - \Lambda (V_n) \underset{n}{\longrightarrow} + \infty.
\end{equation}

\n
This shows that $I(\mu) = \infty$ and completes the proof of (\ref{3.4}).

\medskip
We now prove (\ref{3.3}). Without loss of generality, we can assume that $I_{B'}(\mu') < \infty$. By (\ref{3.4}) this implies that $\mu'$ is absolutely continuous with respect to $m_{B'}$, and hence $\mu$ is absolutely continuous with respect to $m_B$. The inequality $I_B(\mu) \le I_B(\mu')$ is now a direct consequence of (\ref{3.4}). This proves (\ref{3.3}). Finally, (\ref{3.5}) is immediate by comparison of (\ref{3.4}) and (\ref{0.3}).
\end{proof}

We denote by $\cL_{\alpha,B}$ the restriction of the (random) Radon measure $\cL_\alpha$ on $\IR^d$ to the Borel subsets of $B$. The main step towards the large deviation principle for $\nu_{L,\alpha}$, with $\alpha > 0$ fixed and $L \r \infty$, see (\ref{0.5}), is a large deviation principle for $\frac{1}{\alpha} \,\cL_{\alpha,B}$, as $\alpha \r \infty$, which we derive in Theorem \ref{theo3.2} below. We use general Cram\'er theory, see Theorem 6.1.3, p.~252 of \cite{DembZeit98} (with $\chi = M(B)$ and $\cE = M_+ (B)$, in the notation of \cite{DembZeit98}).

\begin{theorem}\label{theo3.2} ($B$ a closed box)
\begin{equation}\label{3.8}
\begin{array}{l}
\mbox{As $\alpha \r \infty$, the laws under $\IP$ of $\frac{1}{\alpha} \;\cL_{\alpha,B}$ on $M_+(B)$ satisfy a large deviation}
\\
\mbox{principle at speed $\alpha$, with the convex good rate function $I_B$ from (\ref{3.1})}.
\end{array}
\end{equation}
Moreover, for any open convex subset $O$ of $M(B)$,
\begin{equation}\label{3.9}
\lim\limits_{\alpha \r \infty} \; \mbox{\f $\dis\frac{1}{\alpha}$} \; \log \IP\Big[\mbox{\f $\dis\frac{1}{\alpha}$} \;\cL_{\alpha, B} \in O\Big] = - \inf\{I_B(\mu); \; \mu \in O \cap M_+(B)\}
\end{equation}
(with $\inf \phi = \infty$, by convention).
\end{theorem}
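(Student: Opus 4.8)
The plan is to invoke general Cram\'er theory for i.i.d.-type sums of independent increments in the locally convex space $M(B)$. The key structural input is that $\cL_\alpha$ has stationary and independent increments in $\alpha$: for $0 \le \alpha_1 \le \alpha_2$, the measure $\cL_{\alpha_2} - \cL_{\alpha_1}$ is independent of $\cL_{\alpha_1}$ and has the same law as $\cL_{\alpha_2 - \alpha_1}$, since Brownian interlacements are driven by a Poisson point process whose intensity is linear in the level. Consequently, writing $\alpha = \lfloor \alpha \rfloor + \{\alpha\}$ and decomposing $\cL_{\alpha,B} = \sum_{k=1}^{\lfloor\alpha\rfloor}(\cL_{k,B}-\cL_{k-1,B}) + (\cL_{\alpha,B}-\cL_{\lfloor\alpha\rfloor,B})$, the random measure $\cL_{\alpha,B}$ is an "integer-plus-remainder" sum of i.i.d.\ copies of $\cL_{1,B}$. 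This puts us exactly in the setting of Theorem 6.1.3, p.~252 of \cite{DembZeit98}, applied with the topological vector space $\chi = M(B)$ (with the weak topology generated by $C(B)$, which makes it a locally convex Hausdorff space whose dual is $C(B)$) and the closed convex subset $\cE = M_+(B)$ in which all the $\frac{1}{\alpha}\cL_{\alpha,B}$ take values.

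First I would verify the hypotheses of the abstract Cram\'er theorem. The logarithmic moment generating function of $\cL_{1,B}$ evaluated at $V \in C(B)$ (viewed as an element of $L^\infty_c(\IR^d)$ vanishing off $B$) is precisely $\Lambda(V)$ by the definition (\ref{2.3}), and by Theorem \ref{theo2.2} it equals $\Gamma(V)$. The Fenchel--Legendre transform of $\Lambda$ restricted to $M_+(B)$ is then exactly the function $I_B$ of (\ref{3.1}). The conditions needed are: (i) $\Lambda$ is finite in a neighborhood of $0$ in some suitable sense — here one uses that $\Lambda(\lambda 1_B) < \infty$ for small $\lambda>0$ from (\ref{1.26}), plus finiteness of $\Lambda(V)$ for, say, all $V\le 0$, which suffices for the exponential-tightness-type bound; (ii) exponential tightness of the laws of $\frac{1}{\alpha}\cL_{\alpha,B}$, which follows from the estimate (\ref{3.6})-style bound: $\IP[\langle \frac{1}{\alpha}\cL_{\alpha,B},1_B\rangle > M] \le e^{-\alpha(\lambda M - \Lambda(\lambda 1_B))}$ by the exponential Chebyshev inequality, and the fact that balls $\{\mu\in M_+(B):\langle\mu,1_B\rangle\le R\}$ are weakly compact in $M_+(B)$ by Prokhorov/Banach--Alaoglu; and (iii) that $I_B$ is a good rate function, which is exactly (\ref{3.2}) of Lemma \ref{lem3.1}. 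Once these are in place, Theorem 6.1.3 of \cite{DembZeit98} delivers the full large deviation principle (\ref{3.8}) with rate function $I_B$, and convexity of $I_B$ is automatic as a Fenchel--Legendre transform (and also recorded in Lemma \ref{lem3.1}).

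For the second assertion (\ref{3.9}), the point is that on open \emph{convex} sets the large deviation lower and upper bounds coincide with a single limit. This is a standard consequence of the convex-duality form of Cram\'er's theorem: for an open convex $O$, the upper bound $\limsup \frac{1}{\alpha}\log\IP[\frac{1}{\alpha}\cL_{\alpha,B}\in O] \le -\inf_{O\cap M_+(B)} I_B$ follows from the general LDP upper bound on the closure, combined with the convexity of $I_B$ which ensures $\inf_{\overline{O}} I_B = \inf_O I_B$ (the infimum of a convex lower-semicontinuous function over a convex set is not lowered by passing to the closure, when $O$ is open and the effective domain is handled correctly — one argues via a minimizing sequence in $O$ and lower semicontinuity), while the lower bound is the generic LDP lower bound for the open set $O$. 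Combining the two matching bounds gives the stated limit, with the convention $\inf\emptyset = +\infty$ covering the case $O\cap M_+(B)=\emptyset$. I would cite Theorem 6.1.3 together with the remarks on convex sets in Chapter 6 \S 1 of \cite{DembZeit98} (or Chapter 3 of \cite{DeusStro89}).

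The main obstacle I anticipate is not the soft LDP machinery but the verification that $\Lambda$ is the genuine limiting (normalized) log-moment generating function in the precise functional-analytic sense the abstract theorem requires — in particular that the pairing of $M(B)$ with $C(B)$ is the right dual pair and that the remainder term $\cL_{\alpha,B}-\cL_{\lfloor\alpha\rfloor,B}$ is exponentially negligible at speed $\alpha$ (uniformly), so that the "integer-sum" asymptotics transfer to the continuous parameter $\alpha$. This negligibility again follows from the independent-increments/scaling structure and a crude exponential moment bound $\IE[e^{\langle\cL_{\{\alpha\},B},V\rangle}] = e^{\{\alpha\}\Lambda(V)} \le e^{\Lambda(V)}$, which is $O(1)$ and hence contributes nothing at speed $\alpha$; the same bound gives the needed control on $\langle\cL_{\{\alpha\}, B}, 1_B\rangle$ for exponential tightness. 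Once these routine but slightly delicate points are dispatched, the identification of the rate function with $I_B$ via Theorem \ref{theo2.2} and Corollary \ref{cor2.5}, and hence with the explicit Dirichlet-form expression of Lemma \ref{lem3.1}, is immediate.
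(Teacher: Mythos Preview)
Your approach coincides with the paper's in its core: both invoke Theorem~6.1.3 of \cite{DembZeit98} (general Cram\'er on the locally convex space $M(B)$ with $\cE = M_+(B)$) applied to the i.i.d.\ increments $\cL_{k,B}-\cL_{k-1,B}$, together with exponential tightness obtained from the Chebyshev bound using $\Lambda(\lambda 1_B)<\infty$ for small $\lambda>0$. Two minor differences are worth noting. For the passage from integer to continuous $\alpha$, the paper works with subadditivity of $\alpha \mapsto f_O(\alpha)=-\log\IP[\frac{1}{\alpha}\,\cL_{\alpha,B}\in O]$ directly on $(0,\infty)$, which requires a non-obvious boundedness step $\sup_{1\le\alpha\le 1+\ve}f_O(\alpha)<\infty$ established via the support theorem for Wiener measure; your remainder-negligibility route avoids that step and does work, but you should spell out the exponential equivalence of $\frac{1}{\alpha}\,\cL_{\alpha,B}$ and $\frac{1}{\lfloor\alpha\rfloor}\,\cL_{\lfloor\alpha\rfloor,B}$ rather than leave it as a moment remark. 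For (\ref{3.9}), the paper reads the exact limit directly off the subadditive limit combined with the open-convex-set clause of Theorem~6.1.3; your route via the LDP upper and lower bounds together with the claim $\inf_{\overline{O}} I_B=\inf_O I_B$ is a different (and valid) argument, but your justification is incomplete: the convexity-and-segment argument requires the existence of some $\mu_1\in O\cap M_+(B)$ with $I_B(\mu_1)<\infty$, which holds because measures with smooth strictly positive density are weakly dense in $M_+(B)$ and have finite $I_B$ --- this should be stated explicitly.
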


\begin{proof}
For any open convex subset $O$ of $M(B)$, the function
\begin{equation}\label{3.10}
f_O(\alpha) = - \log \IP \Big[  \mbox{\f $\dis\frac{1}{\alpha}$} \; \cL_{\alpha,B} \in O\Big] \in [0,+ \infty], \; \mbox{for $\alpha > 0$},
\end{equation}

\medskip\n
is subadditive (since $\cL_{\alpha, B}$ is an $M_+(B)$-valued L\'evy-process). When $O \cap M_+(B) = \phi$, then $f_O$ is identically infinite. Otherwise, when $O \cap M_+(B) \not= \phi$, then, as we now explain, for $\ve \in (0,1]$ small,
\begin{equation}\label{3.11}
\sup\limits_{1 \le \alpha \le 1 + \ve} f_O(\alpha) < \infty.
\end{equation}

\n
Indeed, for small $\ve > 0$, there is a positive $\IP$-probability that exactly one trajectory of the interlacement at level $1$ enters $B$ (i.e. $\mu_{B,\alpha = 1} = 1$, in the notation of (\ref{1.20})) and that $\cL_{1,B} \in \alpha O$, for all $1 \le \alpha \le 1 + \ve$ (this can be arranged, using the fact that $O$ is open, $O \cap M_+(B) \not= \phi$, with the help of the support theorem for the Wiener measure, and the observation that once reaching distance $1$ from $B$, a Brownian trajectory has a non-degenerate probability of never returning to $B$). Since $\cL_{1 + \ve,B} - \cL_{1,B}$ is independent from $\cL_{1,B}$, and vanishes with positive probability, we see that on an event of positive $\IP$-probability $\frac{1}{\alpha} \,\cL_{\alpha,B} \in O$, for $1 \le \alpha \le 1 + \ve$. This is more than enough to prove (\ref{3.11}).

\medskip
Hence, by Lemma 4.2.5, p.~112 of \cite{DeusStro89}, we see that when $O \cap M_+(B) \not= \phi$,
\begin{equation}\label{3.12}
\mbox{$\lim\limits_{\alpha \r \infty} \; \mbox{\f $\dis\frac{1}{\alpha}$} \;f_O(\alpha)$ exists and is finite (and equals $\inf\limits_{\alpha \ge \alpha_0} \;\mbox{\f $\dis\frac{f_O(\alpha)}{\alpha}$}$ for large $\alpha_0$).}
\end{equation}

\n
Moreover, choosing $\lambda$ as above (\ref{3.6}), the Chebyshev Inequality and (\ref{1.26}) imply that 
\begin{equation*}
\IP[\cL_\alpha(B) \ge \alpha M] \le \exp\big\{ - \alpha \big(\lambda M - \Lambda(\lambda 1_B)\big)\big\}, \; \mbox{for $M > 0$, $\alpha > 0$}.
\end{equation*}

\medskip\n
Since $\{\mu \in M_+(B)$; $\mu(B) \le M\}$ is compact for each $M > 0$, and $\lambda M - \Lambda  (\lambda 1_B) \underset{M}{\longrightarrow} \infty$, the exponential tightness of the laws of $\frac{1}{\alpha} \;\cL_{\alpha,B}$ follows. Thus, by Theorem 4.1.11, p.~120 of \cite{DembZeit98}, and p.~8 of the same reference, we see that, as $\alpha \r \infty$, the laws of $\frac{1}{\alpha} \,\cL_{\alpha,B}$ satisfy a large deviation principle at speed $\alpha$. Restricting $\alpha$ to integer values, it follows from Theorem 6.1.3, p.~252 of \cite{DembZeit98} (Assumption 6.1.2 is straightforward to check in our set-up) that the rate function of the above large deviation principle coincides with $I_B$ in (\ref{3.1}) and that the limit in (\ref{3.12}) coincides with the right-hand side of (\ref{3.9}) (when $O \cap M_+(B) = \phi$, both members of (\ref{3.12}) equal $+ \infty$). This concludes the proof of Theorem \ref{theo3.2}.
\end{proof}

The large deviation principle for the profile $\nu_{L,\alpha}$ of the occupation-time measure of Brownian interlacements at level $\alpha$ in a large box $LB$ is now a direct consequence of Theorem \ref{theo3.2} and the scaling property (\ref{1.25}). We refer to (\ref{0.3}), (\ref{0.5}), and above (\ref{0.1}) for notation.

\begin{corollary}\label{cor3.3} (Large deviation principle for the profile, $B$ a closed box, $\alpha > 0$)
\begin{equation}\label{3.13}
\begin{array}{l}
\mbox{As $L \r \infty$, the laws under $\IP$ of $\nu_{L,\alpha}$ on $M_+(B)$ satisfy a large deviation}
\\
\mbox{principle with speed $L^{d-2}$ and rate function $I_\alpha$}.
\end{array}
\end{equation}
Moreover, for any open convex subset $O$ of $M(B)$,
\begin{equation}\label{3.14}
\lim\limits_{L \r \infty} \;\mbox{\f $\dis\frac{1}{L^{d-2}}$} \;\log \IP [\nu_{L,\alpha} \in O] = -\inf\{I_\alpha(\mu); \;\mu \in O \cap M_+(B)\}
\end{equation}
(with $\inf \phi = + \infty$ by convention).
\end{corollary}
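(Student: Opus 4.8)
The plan is to deduce Corollary~\ref{cor3.3} from Theorem~\ref{theo3.2} by unravelling the scaling property (\ref{1.25}) of $\cL_\alpha$ and then invoking the contraction/transfer principle for large deviations under a continuous bijection. First I would record the key distributional identity: by (\ref{1.25}) applied with $\lambda = L$, one has $\cL_{L^{2-d}\alpha} \stackrel{\rm law}{=} L^2 h_L\circ\cL_\alpha$, or equivalently, writing $\beta = L^{d-2}\alpha$, $\cL_\alpha \stackrel{\rm law}{=} L^{-2}\, h_{1/L}\circ \cL_\beta$. Plugging this into the definition (\ref{0.5}) of $\langle\nu_{L,\alpha},f\rangle = L^{-d}\int_{LB} f(y/L)\,\cL_\alpha(dy)$ and performing the change of variables $y \mapsto Ly$ shows that, in law,
\begin{equation*}
\langle\nu_{L,\alpha},f\rangle = \dis\frac{1}{L^{d-2}} \,\dis\frac{1}{\beta/L^{d-2}}\cdot\dis\frac{1}{\alpha}\cdots
\end{equation*}
— more precisely, that $\nu_{L,\alpha}$ has the same law as $\alpha$ times the restriction to $B$ of $\frac{1}{\beta}\,\cL_{\beta,B}$ with $\beta = \alpha L^{d-2}$; this is exactly the remark made in the introduction around (\ref{0.6}) that $\nu_{L,\alpha}$ is distributed as $\widetilde\nu_{L,\alpha}$, the restriction to $B$ of $L^{2-d}\,\cL_{\alpha L^{d-2}}$. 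The bookkeeping here is routine but must be done carefully to get the prefactors right.

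Next I would combine this with Theorem~\ref{theo3.2}. That theorem gives a large deviation principle for $\frac{1}{\beta}\,\cL_{\beta,B}$ on $M_+(B)$ at speed $\beta$ with rate function $I_B$. Since $\beta = \alpha L^{d-2}$ and $\alpha>0$ is fixed, speed $\beta$ is the same as speed $\alpha L^{d-2}$, and $\frac{1}{\alpha L^{d-2}}\log(\cdot) = \frac{1}{\alpha}\cdot\frac{1}{L^{d-2}}\log(\cdot)$; thus $\frac{1}{\beta}\cL_{\beta,B}$ obeys a large deviation principle at speed $L^{d-2}$ with rate function $\alpha\, I_B$. The map $\mu \mapsto \alpha\mu$ is a homeomorphism of $M_+(B)$ (indeed of $M(B)$) for the weak topology, so by the transfer of large deviation principles under continuous bijections (the elementary contraction principle, e.g.\ Theorem~4.2.1 of \cite{DembZeit98}), $\nu_{L,\alpha} \stackrel{\rm law}{=} \alpha\cdot\frac{1}{\beta}\cL_{\beta,B}$ obeys a large deviation principle at speed $L^{d-2}$ with rate function $\mu \mapsto \alpha\, I_B(\mu/\alpha)$. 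By (\ref{3.5}) of Lemma~\ref{lem3.1}, this is precisely $I_\alpha$, which proves (\ref{3.13}). For the refinement (\ref{3.14}), note that $O$ open convex in $M(B)$ is sent by $\mu\mapsto\mu/\alpha$ to another open convex set, so (\ref{3.9}) of Theorem~\ref{theo3.2} applies verbatim after the same rescaling of speed and rate function, giving the exact limit $-\inf\{I_\alpha(\mu);\ \mu\in O\cap M_+(B)\}$.

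Strictly, one should also check that $L$ need not be restricted to integer values: Theorem~\ref{theo3.2} is stated as $\alpha\to\infty$ along the reals (the integer restriction there was only an intermediate device in identifying the rate function), and $\beta=\alpha L^{d-2}\to\infty$ as $L\to\infty$ continuously, so no issue arises. I do not expect a genuine obstacle in this corollary; the only thing to be careful about is the exact form of the scaling relation (\ref{1.25}) and keeping track of which power of $L$ and which factor of $\alpha$ lands where, so that the rate function comes out as $I_\alpha$ and not some constant multiple of it — the main "work," such as it is, is the change of variables in the integral defining $\nu_{L,\alpha}$ and matching it against $\frac{1}{\beta}\cL_{\beta,B}$.
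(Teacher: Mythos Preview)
Your proposal is correct and follows essentially the same route as the paper: the paper's proof simply observes that by (\ref{1.25}) one has $\nu_{L,\alpha} \stackrel{\rm law}{=} \frac{1}{L^{d-2}}\,\cL_{\alpha L^{d-2},B}$ and then says the claims are direct consequences of Theorem~\ref{theo3.2}. You are just making explicit the two elementary steps the paper leaves to the reader, namely the rescaling of the speed (from $\beta=\alpha L^{d-2}$ to $L^{d-2}$, which multiplies the rate function by $\alpha$) and the contraction under the homeomorphism $\mu\mapsto\alpha\mu$, together with the identification $\alpha\,I_B(\cdot/\alpha)=I_\alpha$ from (\ref{3.5}).
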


\begin{proof}
By the scaling property (\ref{1.25}), we see that $\nu_{L,\alpha}$ has the same law as $\wt{\nu}_L =$ \linebreak $\frac{1}{L^{d-2}}$ $\cL_{\alpha\, L^{d-2},B}$, for any $L \ge 1$. Our claims are now direct consequences of Theorem~\ref{theo3.2}.
\end{proof}

\section{The discrete space set-up}
\setcounter{equation}{0}

In this section, we introduce some additional notation concerning continuous time random interlacements on $\IZ^d$, $d \ge 3$. We recall the scaling limit relating the discrete space occupation-times to the occupation-time measure of Brownian interlacements established in Theorem 3.2 of \cite{Szni13a}, see (\ref{4.4}) below. Further, as a preparation to the large deviation principle for $\rho_{N,u}$ (see (\ref{0.1})), which we derive in the next section, we collect here the statements corresponding to Theorem 2.2' and Corollaries \ref{cor2.4}, \ref{cor2.5} in the present set-up.

\medskip
Given $N \ge 1$, we introduce the scaled lattice 
\begin{equation}\label{4.1}
\IL_N = \mbox{\f $\dis\frac{1}{N}$} \;\IZ^d (\subseteq \IR^d),
\end{equation}

\n
and $B$ being a closed box (see above (\ref{0.1})), we set
\begin{equation}\label{4.2}
B_N = B \cap \IL_N .
\end{equation}

\n
For functions $f,h$ on $\IL_N$ such that $\sum\limits_{y\in \IL_N} |f(y)\,h(y)| < \infty$, we write
\begin{equation}\label{4.3}
\langle f,h\rangle_{\IL_N} = \mbox{\f $\dis\frac{1}{N^d}$} \;\dsl_{y \in \IL_N} f(y)\,h(y),
\end{equation}

\medskip\n
and we introduce the spaces $L^p(\IL_N)$, $1 \le p < \infty$, and their corresponding norms in a similar manner.

\medskip
We refer to \cite{Szni12b} for the precise construction of continuous time random interlacements on $\IZ^d$ (with $d \ge 3$). As mentioned in the introduction, we denote by $(\ov{\Omega}, \ov{\cA}, \ov{\IP})$ the canonical space on which they are constructed, and by $(L_{x,u})_{x \in \IZ^d}$ the field of occupation-times of random interlacements at level $u \ge 0$. Fixing $u \ge 0$, this is a stationary field on $\IZ^d$, and by the same proof as for (2.7) of \cite{Szni10a}, it is ergodic. One knows that $\ov{\IE}[L_{x,u}] = u$, for $x \in \IZ^d$, $u \ge 0$, so by the ergodic theorem, see Theorem 2.8, p.~205 of~\cite{Kren85}, one sees that
\begin{equation}\label{4.4}
\mbox{for $u \ge 0$, $\ov{\IP}$-a.s., $\mbox{\f $\dis\frac{1}{N^d}$} \dsl_{y \in \IL_N} L_{Ny,u} \,\delta_y$ converges vaguely to $u\,dy$, as $N \r \infty$}
\end{equation}

\n
(and $\rho_{N,u}$ converges weakly to $u\,m_B$, in the notation of (\ref{0.1})).

\medskip
The occupation-time measure of Brownian interlacements can be expressed as a scaling limit of the occupation times of random interlacements on $\IZ^d$. Namely, one know by Theorem 3.2 of \cite{Szni13a} that for $\alpha \ge 0$, as $N \r \infty$,
\begin{equation}\label{4.5}
\mbox{$\cL_{N,\alpha} \stackrel{\rm def}{=} \mbox{\f $\dis\frac{1}{dN^2}$} \; \dsl_{y\in \IL_N} L_{Ny,d\alpha N^{2-d}}\, \delta_y$ converges in distribution to $\cL_\alpha$}
\end{equation}

\n
(we endow the set of Radon measures on $\IR^d$ with the topology of vague convergence).

\medskip
For $N \ge 1$ and $V$: $\IL_N \r \IR$ with finite support, we introduce 
\begin{equation}\label{4.6}
\Lambda_N(V) = \log \ov{\IE} [e^{\langle \cL_{N,1},V\rangle}] \in (-\infty, + \infty],
\end{equation}
and note that
\begin{equation}\label{4.7}
\Lambda_N(V) = \mbox{\f $\dis\frac{d}{N^{d-2}}$}\,\log\ov{\IE}[e^{\frac{N^{d-2}}{d} \langle \rho_{N,u=1},V\rangle}], \;\mbox{when $V$ vanishes outside $B_N$}
\end{equation}

\n
(where we used (\ref{0.1}) and the fact that the right-hand side of (\ref{2.23}) does not depend on $u$).

\medskip
To state the identity corresponding to Theorem 2.2' in the present set-up, we define, for $N \ge 1$ and $V$: $\IL_N \r \IR$ finitely supported,
\begin{equation}\label{4.8}
\Gamma_N(V) = \langle V,1\rangle_{\IL_N} + \sup\limits_{\varphi \in L^2(\IL_N)} \{2 \langle V,\varphi \rangle_{\IL_N} + \langle V \varphi,  \varphi\rangle_{\IL_N} - \cE_N(\varphi,\varphi)\},
\end{equation}
where for $\varphi \in L^2(\IL_N)$
\begin{equation}\label{4.9}
\cE_N(\varphi,\varphi) = \mbox{\f $\dis\frac{1}{2N^{d-2}}$} \;\dsl_{y \sim y' \,{\rm in} \,\IL_N}\;\fr \;\big(\varphi(y') - \varphi(y)\big)^2 \in [0,\infty]
\end{equation}

\medskip\n
($y \sim y'$ means that $y$ and $y'$ are neighbors in $\IL_N$, i.e. $|y-y'| = \frac{1}{N}$).

\medskip
We also keep the notation $\cE_N(\varphi,\varphi)$ when $\varphi$ belongs to the extended space of the Dirichlet form $\cE_N$ (corresponding to functions on $\IL_N$ that are pointwise limits of an $\cE_N$-Cauchy sequence of finitely supported functions on $\IL_N$). After proper normalization (the $V$ in Theorem 2.2' corresponds to $\frac{1}{dN^2} \,V(\frac{\cdot}{N})$), Theorem 2.2' now yields

\begin{theorem}\label{theo4.1} $(N \ge 1)$
\begin{equation}\label{4.10}
\mbox{$\Lambda_N(V) = \Gamma_N(V)$, for all $V$: $\IL_N \r \IR$ with finite support}.
\end{equation}
\end{theorem}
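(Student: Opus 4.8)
The plan is to obtain Theorem \ref{theo4.1} as a direct transcription of Theorem 2.2' (the weighted-graph version of Theorem \ref{theo2.2}) to the particular weighted graph whose vertex set is the scaled lattice $\IL_N$, with edge weights chosen so that the induced random walk is the simple random walk on $\IL_N$ up to a deterministic time rescaling. Concretely, I would take $E = \IL_N$ and set $c_{y,y'} = \frac{N^{d-2}}{d}$ whenever $y \sim y'$ in $\IL_N$ (i.e. $|y-y'| = \frac1N$) and $c_{y,y'} = 0$ otherwise. Then $\lambda_y = \sum_{z} c_{y,z} = 2d \cdot \frac{N^{d-2}}{d} = 2N^{d-2}$ is constant in $y$, and the discrete-skeleton transition probabilities $p_{y,y'} = c_{y,y'}/\lambda_y = \frac{1}{2d}$ are exactly those of simple random walk on $\IL_N$. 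Since $d \ge 3$, simple random walk on $\IZ^d$ (hence on $\IL_N$) is transient, so the hypotheses of the weighted-graph framework of Remark \ref{rem2.3} are met, and Theorem 2.2' applies on this graph.

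The second step is to match the objects. With the above weights, the Dirichlet form \eqref{2.25} of the graph reads $\cE(f,f) = \frac12 \sum_{y,y'} c_{y,y'}(f(y')-f(y))^2 = \frac{N^{d-2}}{d} \cdot \frac12 \sum_{y \sim y' \,\mathrm{in}\,\IL_N}(f(y')-f(y))^2$, which differs from $\cE_N$ in \eqref{4.9} only by an overall factor; I would record the precise constant and see that $\cE_N = c\,\cE_{\mathrm{graph}}$ with the right $c$ so that the two variational problems agree after the normalization indicated in the paper (``the $V$ in Theorem 2.2' corresponds to $\frac{1}{dN^2}V(\frac{\cdot}{N})$''). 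Likewise I would check that with this normalization the occupation-time field $L_{\cdot,u}$ of \eqref{2.22} on the graph, once rescaled by $\frac{1}{\lambda_y}$ and by the time change absorbed into the holding-time parameter, produces exactly $\cL_{N,\alpha}$ of \eqref{4.5} and hence, via the substitution $V \mapsto \frac{1}{dN^2}V(\frac{\cdot}{N})$, turns the graph-$\Lambda$ of \eqref{2.23} into $\Lambda_N$ of \eqref{4.6} and the graph-$\Gamma$ of \eqref{2.24} into $\Gamma_N$ of \eqref{4.8}. The inner products $\langle f,g\rangle = \sum_x f(x)g(x)$ of Remark \ref{rem2.3} turn into $\langle f,h\rangle_{\IL_N}$ of \eqref{4.3} precisely because of the $\frac{1}{N^d}$ arising from the Jacobian of the scaling $x \mapsto x/N$ together with the $\frac{1}{dN^2}$ factor in the normalization of $V$; I would carry out this bookkeeping carefully since getting a single power of $N$ wrong would break the identity.

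Once the dictionary is in place, \eqref{4.10} is immediate: Theorem 2.2' asserts $\Lambda(V) = \Gamma(V)$ for all finitely supported $V$ on the graph, and applying it to the rescaled test function $\frac{1}{dN^2}V(\frac{\cdot}{N})$ and reading both sides through the dictionary yields $\Lambda_N(V) = \Gamma_N(V)$ for all finitely supported $V$ on $\IL_N$. The remark in the paper that $C_c(E)$ may be replaced by the extended Dirichlet space carries over verbatim and justifies the use of $\cE_N$ on its extended space, as used in the statement preceding Theorem \ref{theo4.1}.

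I expect the only real obstacle to be the careful tracking of the scaling constants --- the interplay between the factor $\frac{1}{dN^2}$ attached to $V$, the factor $N^{d-2}$ implicit in the weights/time change, and the factor $\frac{1}{N^d}$ in the rescaled inner product $\langle\cdot,\cdot\rangle_{\IL_N}$ --- so that all three of $\Lambda_N$, $\langle V,1\rangle_{\IL_N}$, and $\cE_N$ emerge with exactly the normalizations displayed in \eqref{4.6}, \eqref{4.8}, and \eqref{4.9}. This is entirely mechanical but must be done with some care; no new analytic input is needed beyond Theorem 2.2', whose proof (adapted from that of Theorem \ref{theo2.2}) already supplies everything.
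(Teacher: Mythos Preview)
Your proposal is correct and follows essentially the same route as the paper: the paper derives Theorem \ref{theo4.1} directly from Theorem 2.2' applied to the simple-random-walk weighted graph on $\IZ^d$, noting only that ``the $V$ in Theorem 2.2' corresponds to $\frac{1}{dN^2}V(\frac{\cdot}{N})$'' and that this normalization turns the graph quantities $\Lambda$, $\Gamma$, $\cE$ of Remark \ref{rem2.3} into $\Lambda_N$, $\Gamma_N$, $\cE_N$. Your choice to place the graph on $\IL_N$ with weights $c_{y,y'}=\frac{N^{d-2}}{d}$ rather than on $\IZ^d$ with unit weights is an equivalent (relabelled) formulation, and you have correctly identified that the only content of the proof is the bookkeeping of scaling constants.
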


Before stating the corollary corresponding to Corollaries \ref{cor2.4}, \ref{cor2.5} in the present set-up, we define for $\psi$: $B_N \r \IR$,
\begin{equation}\label{4.11}
\wt{\cE}_N(\psi,\psi) = \inf\limits\{\cE_N (\varphi,\varphi); \; \varphi \in L^2(L_N), \;\varphi = \psi \; \mbox{on} \;B_N\}.
\end{equation}

\n
Denoting by $P^N_y$, for $N \ge 1$, $y \in \IL_N$, the canonical law of the simple random walk on $\IL_N$ with exponential holding times of parameter $N^2$, starting at $y \in \IL_N$, and using similar notation for the canonical process, the entrance times, and the exit times, as described at the beginning of Section 1, one knows, as below (\ref{2.27}), that
\begin{equation}\label{4.12}
\wt{\cE}_N (\psi,\psi) = \cE_N(\varphi,\varphi) \in [0,\infty),
\end{equation}

\n
where $\varphi(y) = E^N_y [\psi(X_{H_{B_N}}),H_{B_N} < \infty]$, for $y \in \IL_N$, is harmonic outside $B_N$, tends to zero at infinity, and belongs to the extended Dirichlet space of $\cE_N$ (with $H_{B_N}$ the entrance time in $B_N$).

\medskip
Similarly to Corollaries \ref{cor2.4} and \ref{cor2.5} (but in a much simpler fashion), we have

\begin{corollary}\label{cor4.2} $(N > 1)$

\medskip
When $V$: $\IL_N \r \IR$ vanishes outside $B_N$
\begin{equation}\label{4.13}
\Lambda_N(V) = \sup\limits_{h \ge 0 \,{\rm on}\, B_N} \{\langle V,h\rangle_{\IL_N} - \wt{\cE}_N (\sqrt{h} - 1, \sqrt{h} - 1)\} \in [0,\infty],
\end{equation}

\medskip\n
and for $h$: $B_N \r \IR_+$ (extended to be equal to $0$ outside $B_N)$
\begin{equation}\label{4.14}
\wt{\cE}_N (\sqrt{h} - 1, \sqrt{h}-1) = \sup\limits_{V = 0 \,{\rm on}\,\IL_N \backslash B_N} 
\{ \langle V, h\rangle_{\IL_N} - \Lambda_N(V)\}.
\end{equation}
\end{corollary}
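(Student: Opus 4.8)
The plan is to transpose the proofs of Corollaries~\ref{cor2.4} and~\ref{cor2.5} to the present discrete, finite-dimensional setting, using Theorem~\ref{theo4.1} in place of Theorem~\ref{theo2.2} and (\ref{4.11})--(\ref{4.12}) in place of (\ref{2.27}); since $B_N$ is a finite set, most of the analytic subtleties of Section~2 disappear.

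To prove (\ref{4.13}), I would start from $\Lambda_N(V)=\Gamma_N(V)$ (Theorem~\ref{theo4.1}) and complete the square in (\ref{4.8}): as $V$ vanishes outside $B_N$,
\[
\Lambda_N(V)=\sup_{\varphi\in L^2(\IL_N)}\big\{\langle V,(1+\varphi)^2\rangle_{\IL_N}-\cE_N(\varphi,\varphi)\big\},
\]
and the first term depends on $\varphi$ only through its restriction to $B_N$. Replacing $\varphi$ by $\psi:=|1+\varphi|-1$ --- which leaves $(1+\psi)^2=(1+\varphi)^2$ unchanged and, since $t\mapsto|t|$ is $1$-Lipschitz and $\cE_N$ is invariant under adding constants, satisfies $\cE_N(\psi,\psi)\le\cE_N(\varphi,\varphi)$ termwise in (\ref{4.9}) --- shows the supremum may be restricted to $\varphi$ with $1+\varphi\ge0$ everywhere. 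For such $\varphi$, fixing the restriction $\psi:=\varphi|_{B_N}\ge-1$ and optimizing $\cE_N(\varphi,\varphi)$ over the extensions of $\psi$ yields $\wt{\cE}_N(\psi,\psi)$ by (\ref{4.11}); moreover the optimal extension is the harmonic one, $y\mapsto E^N_y[\psi(X_{H_{B_N}}),H_{B_N}<\infty]$ of (\ref{4.12}), which is $\ge-1$ because $\psi\ge-1$, so the sign constraint costs nothing. Finally the substitution $h=(1+\psi)^2\leftrightarrow\psi=\sqrt{h}-1$ is a bijection of $\{\psi\colon B_N\to[-1,\infty)\}$ onto $\{h\colon B_N\to\IR_+\}$, and since $\langle V,(1+\psi)^2\rangle_{\IL_N}=\langle V,h\rangle_{\IL_N}$ this turns the last display into (\ref{4.13}).

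For (\ref{4.14}) I would read (\ref{4.13}) as the statement that $\Lambda_N(V)=\sup_{h}\{\langle V,h\rangle_{\IL_N}-F_N(h)\}$ is the Legendre--Fenchel transform, for the pairing $\langle\cdot,\cdot\rangle_{\IL_N}$ on the finite-dimensional space of functions supported in $B_N$, of the function $F_N(h):=\wt{\cE}_N(\sqrt{h}-1,\sqrt{h}-1)$ on $\{h\colon B_N\to\IR_+\}$, extended by $+\infty$ elsewhere; $F_N$ is proper, since it vanishes at the constant function $h\equiv1$, and $F_N\ge0$. The right-hand side of (\ref{4.14}) is exactly the biconjugate of $F_N$, so by Fenchel--Moreau duality (Theorem 2.2.15, p.~55 of \cite{DeusStro89}, or Lemma 4.5.8, p.~152 of \cite{DembZeit98}) it suffices to check that $F_N$ is convex and lower semicontinuous on the finite-dimensional space $\{h\colon B_N\to\IR\}$. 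Lower semicontinuity is immediate: if $h_n\ge0$ on $B_N$ converge to $h$, the functions $\sqrt{h_n}-1$ converge on the finite set $B_N$, hence so do the corresponding harmonic extensions $\varphi_n$ pointwise, and $F_N(h)=\cE_N(\varphi_h,\varphi_h)\le\liminf_n\cE_N(\varphi_n,\varphi_n)=\liminf_n F_N(h_n)$ by Fatou applied to the nonnegative sum in (\ref{4.9}). For convexity, set $h_\lambda=(1-\lambda)h_0+\lambda h_1$, let $\varphi_i$ be the harmonic extension of $\sqrt{h_i}-1$, and define $\varphi$ by $1+\varphi:=\big((1-\lambda)(1+\varphi_0)^2+\lambda(1+\varphi_1)^2\big)^{1/2}$, so that $\varphi|_{B_N}=\sqrt{h_\lambda}-1$ and $1+\varphi\ge0$; the elementary pointwise inequality
\[
\big(\sqrt{(1-\lambda)a_0^2+\lambda b_0^2}-\sqrt{(1-\lambda)a_1^2+\lambda b_1^2}\,\big)^2\le(1-\lambda)(a_0-a_1)^2+\lambda(b_0-b_1)^2,\qquad a_i,b_i\ge0,
\]
which follows from the Cauchy--Schwarz inequality $\big((1-\lambda)a_0^2+\lambda b_0^2\big)\big((1-\lambda)a_1^2+\lambda b_1^2\big)\ge\big((1-\lambda)a_0a_1+\lambda b_0b_1\big)^2$, applied to each pair of neighbours $y\sim y'$ in (\ref{4.9}) (using once more the shift-invariance of $\cE_N$), then gives $\cE_N(\varphi,\varphi)\le(1-\lambda)\cE_N(\varphi_0,\varphi_0)+\lambda\cE_N(\varphi_1,\varphi_1)$, that is $F_N(h_\lambda)\le(1-\lambda)F_N(h_0)+\lambda F_N(h_1)$ by (\ref{4.12}). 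This establishes (\ref{4.14}).

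The main obstacle is precisely this convexity of $h\mapsto\wt{\cE}_N(\sqrt{h}-1,\sqrt{h}-1)$, the discrete counterpart of (\ref{2.29}): once the bond-by-bond Cauchy--Schwarz estimate is isolated the argument is routine, but one must be somewhat careful with the extended Dirichlet space of $\cE_N$, since the harmonic extensions realizing $\wt{\cE}_N$ in (\ref{4.12}) (and the function $\varphi$ built above) merely tend to $0$ at infinity and need not lie in $L^2(\IL_N)$ when $d\in\{3,4\}$, so they are to be understood in that extended space, or reached by a truncation/approximation argument. All the remaining steps are genuinely lighter than in Section~2, because the finiteness of $B_N$ removes the need for the $L^\infty$--$L^1$ duality and the $C(B)$-density argument used in Corollary~\ref{cor2.5}.
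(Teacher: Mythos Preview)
Your proposal is correct and follows precisely the route the paper indicates: the paper states Corollary~\ref{cor4.2} without proof, merely remarking that it follows ``similarly to Corollaries~\ref{cor2.4} and~\ref{cor2.5} (but in a much simpler fashion)'', and your argument is a faithful and careful transcription of those proofs to the finite-dimensional lattice setting. The one place where you supply genuinely new detail is the convexity of $h\mapsto\wt\cE_N(\sqrt{h}-1,\sqrt{h}-1)$: the paper handles the continuous analogue~(\ref{2.29}) by citing an argument from \cite{DeusStro89}, whereas your bond-by-bond Cauchy--Schwarz inequality is an explicit, elementary proof of the same fact; both are valid and yours has the merit of being self-contained.
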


\section{Large deviations for occupation-time profiles of random interlacements}
\setcounter{equation}{0}

The main object of this section is to prove a large deviation principle for the occupation-time profile $\rho_{N,u}$, see (\ref{0.1}), of continuous time random interlacements on $\IZ^d$ at level $u$, when $N \r \infty$, cf.~Theorem \ref{5.8}. Subadditivity is an important ingredient in our proof, see Proposition \ref{prop5.2} and Corollary \ref{cor5.3}. The lower bound in the large deviation principle appears in Theorem \ref{theo5.4}. It relies on Corollary \ref{cor5.3}, and on the combination of the large deviation principle for the occupation-time profile of Brownian interlacements proved in Section 3, and the scaling limit result (\ref{4.5}) proved in \cite{Szni13a}, see also Lemma \ref{lem5.1} below. The upper bound appears in Theorem \ref{5.5}. It relies on Corollary \ref{cor5.3} and on Proposition \ref{prop5.6}, which provides an asymptotic lower bound for a sequence of discrete variational problems,  in the spirit of $\Gamma$-convergence, see Proposition 7.2, p.~68 of \cite{Dalm93}.

\medskip
We pick $u > 0$ and a closed box $B$ (see above (\ref{0.1})). The space $M(B)$ is equipped with the weak topology generated by $C(B)$, as explained in Section 3. We consider $\nu \in M_+(B)$, a finite collection $f_\ell \in C(B)$, $1 \le \ell \le K$, with $f_1 = 1_B$, and a number $\delta > 0$. We define the convex open subset $A$ of $M_+(B)$ consisting of positive measures on $B$ with integrals with respect to $f_\ell$, $1 \le \ell \le K$, $\delta$-close to the corresponding integrals with respect to $\nu$, and we denote by $O$ the homothetic image of $A$ with ratio $u$:
\begin{equation}\label{5.1}
A = \{\rho \in M_+(B); \;|\langle \rho,f_\ell \rangle - \langle \nu, f_\ell\rangle| < \delta, \;\mbox{for} \; 1 \le \ell \le K\}, \;\mbox{and}\;\; O = u A .
\end{equation}

\n
We use the shorthand notation $c(A)$ to denote a positive constant, which depends on $d,B,\nu,K, (f_\ell)_{1 \le \ell \le K}, \delta$. The collection of sets $A$ (or $O$)  above constitutes a base for the relative topology on $M_+(B)$ (viewed as a subset of $M(B)$).

\medskip
We also define for $N \ge 1$, $t \ge 0$ (see (\ref{4.5}) for notation)
\begin{equation}\label{5.2}
\mbox{$\wt{\cL}_{N,t} =$ the restriction of $\cL_{N,t}$ to Borel subsets of $B$}
\end{equation}

\n
(a random element of $M_+(B)$). We now state a consequence of (\ref{4.5}).

\begin{lemma}\label{lem5.1} $(\alpha \ge 0)$
\begin{equation}\label{5.3}
\mbox{As $N \r \infty$, $\wt{\cL}_{N,\alpha}$ converges in distribution to $\cL_{\alpha, B}$ (see above (\ref{3.8}) for notation)}
\end{equation}
\end{lemma}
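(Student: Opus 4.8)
The plan is to deduce the convergence of the restricted measures $\wt{\cL}_{N,\alpha}$ to $\cL_{\alpha,B}$ from the vague convergence of the full measures $\cL_{N,\alpha}$ to $\cL_\alpha$ on $\IR^d$ established in (\ref{4.5}). The subtlety is that restriction to a \emph{closed} box $B$ is not a continuous operation for the vague (or weak) topology: mass sitting on $\partial B$ can be lost or gained in the limit. So the first step is to enlarge the picture. Fix a closed box $B'$ with $B \subseteq \mathrm{int}(B')$; then $\cL_{N,\alpha}$ restricted to $B'$ still converges in distribution, and one works inside the compact set $B'$ where vague convergence and weak convergence coincide.

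First I would record that, by (\ref{4.5}) together with the continuous mapping theorem applied to the restriction map to $B'$ (which is continuous for vague convergence because $B'$ has nonempty interior containing the relevant boundary issues only on $\partial B'$, and one may further shrink so that $\IE[\cL_\alpha(\partial B')] = \alpha\, m(\partial B') = 0$), the measures $\cL_{N,\alpha}\!\restriction_{B'}$ converge in distribution to $\cL_{\alpha}\!\restriction_{B'} = \cL_{\alpha,B'}$ as random elements of $M_+(B')$. Next, the key point is that $\cL_{\alpha,B'}$ almost surely gives zero mass to $\partial B$: indeed $\IE[\cL_\alpha(\partial B)] = \alpha \int_{\partial B} dy = 0$ by (\ref{1.24}), since $\partial B$ is Lebesgue-null, so $\cL_\alpha(\partial B) = 0$ a.s. Therefore $B$ is a.s.\ a continuity set for the limiting random measure, and the restriction map $M_+(B') \to M_+(B)$, $\rho \mapsto \rho\!\restriction_B$, is continuous at $\cL_{\alpha,B'}$-almost every point. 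A second application of the continuous mapping theorem (in the form valid when the map is continuous only almost everywhere with respect to the limit law) then yields that $\cL_{N,\alpha}\!\restriction_B = \wt{\cL}_{N,\alpha}$ converges in distribution to $\cL_{\alpha,B'}\!\restriction_B = \cL_{\alpha,B}$ in $M_+(B)$, which is exactly (\ref{5.3}).

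Alternatively, and perhaps more cleanly, one can argue directly with test functions: for $g \in C(B)$ one must show $\langle \wt{\cL}_{N,\alpha}, g\rangle \Rightarrow \langle \cL_{\alpha,B}, g\rangle$, and more generally handle finitely many test functions jointly. Extend $g$ to a compactly supported continuous $\bar g$ on $\IR^d$; then $\langle \cL_{N,\alpha}, \bar g\rangle \Rightarrow \langle \cL_\alpha, \bar g\rangle$ by (\ref{4.5}). The discrepancy $\langle \cL_{N,\alpha}, \bar g\rangle - \langle \wt{\cL}_{N,\alpha}, g\rangle$ is controlled by $\|\bar g\|_\infty\, \cL_{N,\alpha}(\IR^d \setminus B)$ localized near $\partial B$; choosing $\bar g$ supported in a thin neighborhood $B^\eta$ of $B$ and using $\IE[\cL_{N,\alpha}(B^\eta \setminus \mathrm{int}(B))] \to \alpha\, m(B^\eta \setminus \mathrm{int}(B)) = O(\eta)$ (which follows from the convergence of intensity measures underlying (\ref{4.5}), or directly from $\IE[L_{x,u}] = u$), one makes this error small uniformly in $N$, then lets $\eta \downarrow 0$.

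\textbf{Main obstacle.} The only real issue is the boundary effect: $\wt{\cL}_{N,\alpha}$ is a restriction to a closed set, and a priori discrete mass accumulating on sites near $\partial B$ could behave discontinuously in the limit. The resolution is entirely standard—$\partial B$ is Lebesgue-null and $\cL_\alpha$ has Lebesgue intensity, so $\cL_\alpha(\partial B) = 0$ a.s.—but it is the one place where some care is needed, and it is why one passes through a slightly larger box $B'$ (or a thin collar neighborhood) rather than invoking continuous mapping naively. Everything else is a routine application of the continuous mapping theorem for weak convergence of random measures.
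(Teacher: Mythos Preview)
Your proposal is correct and rests on exactly the same idea as the paper: apply the continuous mapping theorem to (\ref{4.5}), using that $\cL_\alpha(\partial B)=0$ a.s.\ (since $\IE[\cL_\alpha(\partial B)]=\alpha\,m(\partial B)=0$ by (\ref{1.24})) so that the restriction map is continuous at a.e.\ point under the law of $\cL_\alpha$. The paper carries this out in one step, mapping directly from Radon measures on $\IR^d$ with the vague topology to $M_+(B)$ with the weak topology, whereas you insert an intermediate enlargement $B'\supseteq B$; this detour is harmless but unnecessary, since for $V\in C(B)$ extended by $0$ outside $B$ the discontinuity set lies in $\partial B$, and vague convergence $\rho_n\to\rho$ already yields $\langle\rho_n,V\rangle\to\langle\rho,V\rangle$ whenever $\rho(\partial B)=0$.
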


\begin{proof}
By (\ref{4.5}) and the continuous mapping theorem, see Theorem 5.1, p.\,30 of \cite{Bill68}, it suffices to show that (denoting the set of Radon measures on $\IR^d$ by $\cM_+(\IR^d)$):
\begin{equation}\label{5.4}
\begin{array}{l}
\mbox{the set of continuity points of the map $\rho \in \cM_+(\IR^d) \r \wt{\rho} \in M_+(B)$, where $\wt{\rho}$ is}
\\
\mbox{the restriction of $\rho$ to Borel subsets of $B$, has full measure under the law of $\cL_\alpha$}.
\end{array}
\end{equation}

\n
To see this point, note that by (\ref{1.24}), for a.e.\,$\rho$ under the law of $\cL_\alpha$, one has $\rho (\partial B) = 0$. Hence, for any such $\rho$, for any $V \in C(B)$ (extended as $0$ outside $B$), and any sequence $\rho_n$ converging vaguely to $\rho$, one has $\langle \rho_n, V\rangle \underset{n}{\longrightarrow} \langle \rho, V\rangle$. This proves that $\wt{\rho}_n$ converges weakly to $\wt{\rho}$ in $M_+(B)$ and (\ref{5.4}) follows.
\end{proof}

We then introduce
\begin{equation}\label{5.5}
f_{N,A}(t) = - \log \ov{\IP} \Big[\mbox{\f $\dis\frac{1}{t}$} \; \wt{\cL}_{N,t} \in A\Big], \; \mbox{for} \; t > 0, N \ge 1.
\end{equation}

\n
Since $t \r \wt{\cL}_{N,t}$ has independent, stationary, increments, and $A$ is convex,
\begin{equation}\label{5.6}
\mbox{for each $N \ge 1$, $f_{N,A}(\cdot)$ is subadditive}.
\end{equation}

\n
The next proposition collects some important bounds on $\ov{\IP} [\frac{1}{t} \;\cL_{N,t} \in A]$, which exploit subadditivity. It comes as a step towards Corollary \ref{cor5.3} below. We recall the convention on constants stated below (\ref{5.1}).

\begin{proposition}\label{prop5.2}
When $N \ge c_0(A)$, then for $t_1 \ge c(A)$ and $t \ge 2t_1$, one has
\begin{equation}\label{5.7}
\exp\Big\{ - t \; \dis\frac{f_{N,A}(t_1)}{t_1} - 2t_1 \,c'(A)\Big\} \le \ov{\IP} \Big[ \mbox{\f $\dis\frac{1}{t}$}\;\wt{\cL}_{N,t} \in A\Big] \le \exp\Big\{ - t \;\lim\limits_{s \r \infty} \;\dis\frac{f_{N,A}(s)}{s}\Big\}
\end{equation}

\n
(and the limit in the rightmost term of (\ref{5.7}) exists and is finite).
\end{proposition}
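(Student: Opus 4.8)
The plan is to prove the two bounds in~(\ref{5.7}) separately, exploiting the subadditivity~(\ref{5.6}) of $f_{N,A}(\cdot)$ together with the finiteness properties established in the preceding discussion, in direct analogy with the Brownian case treated in Theorem~\ref{theo3.2}.

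For the \emph{upper bound} in~(\ref{5.7}): first I would record that, since $A$ is open and $O = uA$ is a non-empty open convex subset of $M_+(B)$ containing (for a suitable choice) measures in its interior, the subadditive function $f_{N,A}$ is finite for large enough $t$. Indeed, by an argument in the spirit of the one below~(\ref{3.11}), one can arrange that with positive $\ov{\IP}$-probability exactly one trajectory of the interlacement at a fixed small level enters $NB_N$ and produces a discrete occupation measure lying in $tA$ for a range of $t$; then independence and stationarity of the increments of $t \mapsto \wt{\cL}_{N,t}$ give $\sup_{1 \le t \le 1+\ve} f_{N,A}(t) < \infty$ for some $\ve > 0$, provided $N \ge c_0(A)$. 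Given this, Fekete's subadditive lemma (equivalently Lemma 4.2.5, p.~112 of \cite{DeusStro89}, as already invoked around~(\ref{3.12})) yields that $\lim_{s \to \infty} f_{N,A}(s)/s$ exists, is finite, and equals $\inf_{s \ge s_0} f_{N,A}(s)/s$ for large $s_0$. Monotonicity of this infimum then gives $f_{N,A}(t) \ge t \lim_{s \to \infty} f_{N,A}(s)/s$ for $t$ large, which is precisely $\ov{\IP}[\frac{1}{t}\wt{\cL}_{N,t} \in A] \le \exp\{-t \lim_{s \to \infty} f_{N,A}(s)/s\}$, the asserted right-hand inequality.

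For the \emph{lower bound} in~(\ref{5.7}): write $t = k t_1 + r$ with $k \ge 1$ integer and $0 \le r < t_1$ (possible since $t \ge 2t_1$, so in fact $k \ge 2$). Subadditivity gives $f_{N,A}(t) \le k\, f_{N,A}(t_1) + f_{N,A}(r)$, but I must control the leftover term $f_{N,A}(r)$ for small $r$, where $A$ may be hard to hit. The device is not to split off an interval of length $r$ but rather to absorb it: write instead $t = k t_1 + r$ and bound $\ov{\IP}[\frac{1}{t}\wt{\cL}_{N,t} \in A]$ from below by the probability that each of $k$ disjoint increments of length $t_1$ lands in $A$ (so their sum lands in $k A = (k t_1) A$ scaled appropriately, using convexity of $A$) \emph{and} the remaining increment of length $r$ is small enough not to spoil membership — the latter happening with probability at least $c''(A)$ uniformly, since the expected mass added over time $r \le t_1$ is bounded by $c(A) t_1$ and one only needs it to be within the $\delta$-slack of $A$ around $\nu$; alternatively one shows directly that $f_{N,A}(r) \le t_1 c'(A)$ for all $0 \le r \le t_1$ once $t_1 \ge c(A)$, by the same one-trajectory construction. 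Combining, $f_{N,A}(t) \le k\, f_{N,A}(t_1) + t_1 c'(A) \le \frac{t}{t_1} f_{N,A}(t_1) + t_1 c'(A) + f_{N,A}(t_1)$, and since $t \ge 2 t_1$ gives $f_{N,A}(t_1) \le \frac{t}{t_1} f_{N,A}(t_1)$, we get $f_{N,A}(t) \le 2\frac{t}{t_1} f_{N,A}(t_1)$ — not quite the stated form, so more care is needed: one keeps $f_{N,A}(t) \le \frac{t}{t_1} f_{N,A}(t_1) + 2 t_1 c'(A)$ by noting $k \le t/t_1$ and $f_{N,A}(r) \le 2 t_1 c'(A)$, which exponentiates to the left-hand inequality in~(\ref{5.7}).

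The main obstacle I anticipate is the handling of the leftover increment of length $r < t_1$, i.e.\ proving a clean bound $f_{N,A}(r) \le 2 t_1 c'(A)$ uniform in $N \ge c_0(A)$ and $0 \le r \le t_1$: hitting the \emph{open} set $A$ with a tiny amount of interlacement time requires either the one-trajectory support-theorem construction (which needs $r$ bounded below, hence really requires reorganizing the splitting so the short piece is combined with a long one) or a direct estimate showing that adding interlacement mass over a short time $r$ perturbs the pairings $\langle \cdot, f_\ell\rangle$ by less than $\delta$ with probability bounded away from $0$. The cleanest route is probably the second: use that the increment $\wt{\cL}_{N,t}-\wt{\cL}_{N,t-r}$ has expectation $\le c(A)\,r \le c(A) t_1$ in each $\langle\cdot,f_\ell\rangle$, pick $t_1 = c(A)$ large enough, apply a Markov-type bound to see this increment is $\delta$-small with probability $\ge 1/2$, and invoke independence from the long piece — this is exactly where the constant $c'(A)$ and the requirement $t_1 \ge c(A)$ enter, and it is the one genuinely technical point; everything else is Fekete plus bookkeeping.
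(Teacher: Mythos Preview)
Your upper bound is essentially the paper's argument: once one knows $f_{N,A}$ is finite on some interval, the subadditive lemma (Lemma~4.2.5 of \cite{DeusStro89}) gives existence and finiteness of $\lim_{s\to\infty} f_{N,A}(s)/s$ together with $f_{N,A}(t)/t \ge \lim_{s\to\infty} f_{N,A}(s)/s$ for $t$ large.

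There is, however, a genuine gap in how you establish the finiteness input and in how you handle the lower bound. The paper's key step is to prove
\[
\sup_{N\ge N_0(A)}\ \sup_{t\in[1,1+\ve_A]} f_{N,A}(t) = M < \infty
\]
for some $\ve_A = 1/q_A$, and it does this by invoking Lemma~\ref{lem5.1} (convergence in law of $\wt{\cL}_{N,1}$ to $\cL_{1,B}$) together with the already-established Brownian bound~(\ref{3.11}): one passes through a slightly smaller set $A'$ (with $\delta/2$ in place of $\delta$), uses $\liminf_N \ov{\IP}[\wt{\cL}_{N,1}\in A'] \ge \IP[\cL_{1,B}\in A'] > 0$, and then freezes the interlacement on $(dN^{2-d}, d(1+\ve)N^{2-d}]$. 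Your ``one discrete trajectory plus support theorem'' sketch does not justify the uniformity in $N$; that uniformity is exactly what the scaling-limit result buys, and without it you cannot get constants $c_0(A),\,c'(A)$ independent of $N$.

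Once this uniform bound $M$ is in hand, the \emph{same} Lemma~4.2.5 of \cite{DeusStro89} gives the lower bound directly: for $t_1\ge q_A$ and $t\ge 2t_1$ one writes
\[
f_{N,A}(t) \le \Big(\Big[\frac{t}{t_1}\Big]-1\Big) f_{N,A}(t_1) + 2t_1 M \le \frac{t}{t_1}\, f_{N,A}(t_1) + 2t_1 M,
\]
the remainder of length in $[t_1,2t_1)$ being covered by at most $2t_1$ subintervals of length in $[1,1+\ve_A]$, each contributing at most $M$. Your attempt to control a leftover piece of length $r<t_1$ via Markov-type bounds on the increment is both unnecessary and, as you yourself observe, does not cleanly produce the stated inequality with $f_{N,A}(t_1)$ (it would naturally lead to $f_{N,A'}(t_1)$ for a smaller $A'$). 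The clean route is to avoid short remainders altogether by the decomposition above, which requires precisely the uniform bound on $[1,1+\ve_A]$ that the scaling-limit argument provides.
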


\begin{proof}
We first show that for some $\ve_A = 1/q_A$, where $q_A$ is some positive integer depending on $A$ (with a similar meaning as below (\ref{5.1})) and for some $N_0(A)$
\begin{equation}\label{5.8}
\sup\limits_{N \ge N_0(A)} \;\sup\limits_{t \in [1,1 + \ve_A]} \,f_{N,A}(t) = M < \infty.
\end{equation}

\n
To see this point, we introduce $A'$ defined as $A$ in (\ref{5.1}) with $\delta$ replaced by $\frac{\delta}{2}$. By (\ref{5.3}) and (\ref{3.11}), we know that $\liminf_{N} \ov{\IP}[\wt{\cL}_{N,1} \in A'] \ge \IP[\cL_{1,B} \in A'] > 0$. Hence, for $N \ge N_0(A)$, $\ov{\IP}[\wt{\cL}_{N,1} \in A'] \ge c(A)$. Then, the probability that no trajectory of the interlacement with label in $(d N^{2-d}$, $d(1 + \ve) N^{2-d}]$ enters $NB_N$ is equal to $e^{-d \ve N^{2-d} {\rm cap}_{\IZ^d}(NB_N)} \ge e^{-c(B)\ve}$ (with ${\rm cap}_{\IZ^d}(\cdot)$ the capacity on $\IZ^d$, see for instance (1.57) of \cite{Szni10a}). Such an event is independent under $\ov{\IP}$ of $\{\wt{\cL}_{N,1} \in A'\}$. When $\ve = \ve_A = 1/q_A$ with $q_A$ a large enough integer, so that the set of multiples by a scalar in $[(1 + \ve_A)^{-1},1]$ of a measure in $A'$ is contained in $A$, on the intersection of these two events, one has $\frac{1}{t} \, \wt{\cL}_{N,t} = \frac{1}{t} \,\wt{\cL}_{N,1} \in A$, for all $1 \le t \le 1 + \ve_A$. The claim (\ref{5.8}) follows.

\medskip
We can now apply Lemma 4.2.5, p.~112 of \cite{DeusStro89}, and find that when $t_1 \ge q_A$, then for $t \ge 2t_1$ and $N \ge N_0(A)$, one has (with $[\cdot]$ denoting the integer part)
\begin{equation}\label{5.9}
f_{N,A}(t) \le \Big(\Big[\mbox{\f $\dis\frac{t}{t_1}$}\Big] - 1\Big) \,f_{N,A}(t_1) + 2 t_1 \,M \le t \;\mbox{\f $\dis\frac{f_{N,A}(t_1)}{t_1}$} + 2 t_1 M.
\end{equation}
This proves the first inequality in (\ref{5.7}).

\medskip
As for the  second inequality, we know by the same Lemma 4.2.5 of \cite{DeusStro89}, that the limit in the rightmost term of (\ref{5.7}) exists, is finite, and that moreover, for $t \ge q_A$,
\begin{equation*}
\lim\limits_{s \r \infty} \;\; \dis\frac{f_{N,A}(s)}{s} \le \dis\frac{f_{N,A}(t)}{t}\,.
\end{equation*}
This implies the second inequality in (\ref{5.7}) (and one can choose $c(A) = q_A$ in (\ref{5.7})).
\end{proof}

\n
For fixed $N \ge 1$, the finite dimensional space of signed measures spanned by the basis $\frac{1}{N^d} \; \delta_y, y \in B_N$, is a closed subspace of $M(B)$, and the corresponding coordinates yield an homeomorphism with $\IR^{B_N}$. The intersection of this space with $M_+(B)$ consists of linear combinations of $\frac{1}{N^d} \; \delta_y, y \in B_N$, with non-negative coefficients, and defines a closed convex subset of $M_+(B)$. The finite convex lower semi-continuous  (it is actually continuous) function on $\IR^{B_N}_+$ in (\ref{4.14}), extended to be equal to $+ \infty$ on the complement in $M_+(B)$ of its domain of definition, yields a convex rate function (i.e. a convex, lower semi-continuous, $[0,+\infty]$-valued function) denoted by
\begin{equation}\label{5.10}
\begin{split}
I_N (\mu) & = \wt{\cE}_N (\sqrt{h} - 1, \sqrt{h} - 1), \; \mbox{if $\mu = \mbox{\f $\dis\frac{1}{N^d}$}  \; \dsl_{y \in B_N} h(y)\,\delta_y$, with $h$:  $B_N \r \IR_+$,}
\\ 
& = + \infty, \;\mbox{otherwise}.
\end{split}
\end{equation}
We can now state a consequence of Proposition \ref{prop5.2}. The difference lies in the rightmost inequality, which now involves the functional $I_N$. The next Corollary  \ref{cor5.3} encapsulates the sub-additivity lower and upper bounds, which we will respectively use in the proofs of Theorem \ref{5.4} and of Theorem \ref{theo5.5}.

\begin{corollary}\label{cor5.3} (with $A$ as in (\ref{5.1}))

\medskip
When $N \ge N_0(A)$, for $t_1 \ge c(A)$ and $t \ge 2t_1$, one has
\begin{equation}\label{5.11}
\exp\Big\{ - t \; \dis\frac{f_{N,A}(t_1)}{t_1} - 2t_1 \,c'(A)\Big\} \le \ov{\IP} \Big[ \mbox{\f $\dis\frac{1}{t}$} \;\wt{\cL}_{N,t} \in A\Big] \le \exp\big\{ - t \;\inf\limits_{\rho \in A} \;I_N(\rho)\big\}.
\end{equation}
\end{corollary}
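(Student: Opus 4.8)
The plan is to combine the subadditivity bounds of Proposition \ref{prop5.2} with the variational identity of Corollary \ref{cor4.2} to convert the rightmost exponent in \eqref{5.7} into the desired infimum of $I_N$. The left-hand inequality in \eqref{5.11} is already contained verbatim in \eqref{5.7}, so all the work goes into the upper bound: one must show that for fixed $N \ge N_0(A)$,
\begin{equation}\label{5.11-goal}
\lim\limits_{s \r \infty} \;\dis\frac{f_{N,A}(s)}{s} \ge \inf\limits_{\rho \in A} \;I_N(\rho).
\end{equation}
Since $A$ is a convex open subset of $M_+(B)$ specified by finitely many linear constraints involving $C(B)$-functions $f_\ell$ that vanish outside $B_N$ after restriction, the computation effectively takes place in the finite-dimensional space $\IR^{B_N}$, and the L\'evy process $t \mapsto \wt{\cL}_{N,t}$ is, in these coordinates, an $\IR^{B_N}_+$-valued L\'evy process whose log-Laplace transform at time $t=1$ is exactly $\Lambda_N(V)$ from \eqref{4.6}--\eqref{4.7}.

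First I would invoke the finite-dimensional G\"artner--Ellis / Cram\'er upper bound for i.i.d.\ (or L\'evy) sums: restricting $s$ to integer values, $\frac{1}{s}\wt{\cL}_{N,s}$ is an average of $s$ i.i.d.\ copies of $\wt{\cL}_{N,1}$, so by the Chebyshev/exponential-tilting argument (e.g.\ Lemma 2.2.5 or Theorem 2.2.30 of \cite{DembZeit98}, applied in $\IR^{B_N}$), for any closed set $C$,
\begin{equation*}
\limsup_s \dis\frac{1}{s}\log \ov{\IP}\Big[\mbox{\f $\dis\frac{1}{s}$}\wt{\cL}_{N,s} \in C\Big] \le -\inf_{\mu \in C} \Lambda_N^*(\mu),
\end{equation*}
where $\Lambda_N^*$ is the Legendre transform of $V \mapsto \Lambda_N(V)$ over finitely supported $V$ vanishing outside $B_N$. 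Applying this with $C = \overline{A}$ and combining with the subadditive limit, one gets $\lim_s f_{N,A}(s)/s \ge \inf_{\mu \in \overline{A}} \Lambda_N^*(\mu)$; but since $A$ is open and its closure differs only in a lower-dimensional boundary while $\Lambda_N^*$ is lower semicontinuous and convex, a standard convexity argument (the infimum of a convex lsc function over an open convex set equals the infimum over its closure when the interior is nonempty) lets one pass back to $\inf_{\rho\in A}\Lambda_N^*(\rho)$. The final step is to identify $\Lambda_N^* = I_N$: by \eqref{4.14} of Corollary \ref{cor4.2}, for $h \ge 0$ on $B_N$ extended by $0$, $\wt{\cE}_N(\sqrt h -1,\sqrt h -1) = \sup_{V} \{\langle V,h\rangle_{\IL_N} - \Lambda_N(V)\}$, which is precisely the Legendre transform evaluated at the measure $\frac{1}{N^d}\sum_{y\in B_N} h(y)\delta_y$; for measures not of this form $\Lambda_N^*$ is $+\infty$ (one can see this by taking $V$ concentrated off the support or with large oscillation), matching the definition \eqref{5.10} of $I_N$. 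This yields \eqref{5.11-goal} and hence the upper bound in \eqref{5.11}.

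I expect the main obstacle to be the bookkeeping around the boundary of $A$ and the extension of $I_N$ to $+\infty$: one must be careful that the Cram\'er upper bound is applied to a genuinely closed set, that the passage from $\inf_{\overline A}$ to $\inf_A$ does not lose anything (using $A$ open, convex, nonempty, and $I_N$ convex lsc on the finite-dimensional ambient space), and that measures with mass outside $B_N$ or atoms off the lattice—which could a priori lie in $\overline A$—are correctly assigned rate $+\infty$ so they do not spoil the infimum. A secondary technical point is that $\wt{\cL}_{N,t}$ for non-integer $t$ must be handled via the subadditivity already established in \eqref{5.6} together with monotonicity of the Laplace bound, so that restricting to integer $s$ in the Cram\'er step costs nothing; this is exactly what Proposition \ref{prop5.2} packages, so in practice one simply plugs $\lim_s f_{N,A}(s)/s \ge \inf_A I_N$ into the right-hand side of \eqref{5.7}.
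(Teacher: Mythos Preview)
Your proposal is correct and follows essentially the same route as the paper: both identify the rate function of the finite-dimensional L\'evy process $t\mapsto\wt{\cL}_{N,t}$ as $I_N$ via Corollary~\ref{cor4.2} (formula~\eqref{4.14}), and then feed this into the subadditive bounds of Proposition~\ref{prop5.2}. The only difference is packaging: the paper invokes Theorem~6.1.3 of \cite{DembZeit98} directly, which already contains the statement that for open convex $A$ the limit $\lim_n f_{N,A}(n)/n$ exists and equals $\inf_A I_N$, so no detour through $\overline{A}$ and no separate ``$\inf_{\overline A}=\inf_A$'' argument is needed; your closed-set Cram\'er upper bound followed by the convexity/lower-semicontinuity passage back to $A$ reproves a piece of that theorem by hand, which is fine but slightly longer.
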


\begin{proof}
Recall that $t \r \wt{\cL}_{N,t}$ has independent, stationary increments. By Theorem 6.1.3, p.~252 of \cite{DembZeit98}, and (\ref{4.6}), (\ref{4.14}), we know that for each fixed $N \ge 1$, the laws of $\frac{1}{n} \;\wt{\cL}_{N,n}$ (on $M_+(B)$), as $n \r \infty$, satisfy a weak large deviation principle at speed $n$, with convex rate function $I_N$, see (\ref{5.10}). Moreover, by the same reference, $\lim_n - \frac{f_{N,A}(n)}{n} = -\inf_A I_N$ ($\in [-\infty,0]$). The comparison with (\ref{5.7}) yields (\ref{5.11}).
\end{proof}

We can now derive the asymptotic lower bound.

\begin{theorem}\label{theo5.4} (Large deviation lower bound, $u > 0$, $O$ as in (\ref{5.1}))
\begin{equation}\label{5.12}
\underset{N}{\underline{\lim}}\; \mbox{\f $\dis\frac{1}{N^{d-2}}$} \; \log \ov{\IP}[\rho_{N,u} \in O] \ge - \inf\limits_{\mu \in O} \; \mbox{\f $\dis\frac{1}{d}$} \;I_u(\mu) \;\mbox{(see (\ref{0.3}) for notation)}.
\end{equation}
\end{theorem}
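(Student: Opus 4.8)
The plan is to derive the lower bound from the subadditivity bound in Corollary~\ref{cor5.3} together with the Brownian large deviation principle of Corollary~\ref{cor3.3} and the scaling limit Lemma~\ref{lem5.1}. Since it suffices to check the lower bound on a base of the topology, I fix $\nu\in M_+(B)$ with $I_u(\nu)<\infty$ (otherwise there is nothing to prove), a finite family $f_1=1_B,f_2,\dots,f_K\in C(B)$, and $\delta>0$, and I let $A$ and $O=uA$ be as in~(\ref{5.1}) with $\nu/u$ playing the role of the reference measure; note that by~(\ref{0.1}) and the definition of $\wt{\cL}_{N,t}$ in~(\ref{5.2}), (\ref{4.5}), the event $\{\rho_{N,u}\in O\}$ is, up to the identification $u\leftrightarrow dN^{2-d}\cdot(\text{level in }\cL_{N,\cdot})$, essentially the event $\{\tfrac1t\wt{\cL}_{N,t}\in A\}$ for $t=\alpha N^{d-2}$ with $\alpha=u/d$. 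The first step is to make this identification precise: by~(\ref{4.5}) and~(\ref{0.1}), $\rho_{N,u}$ restricted to $B$ equals (in law, as a measure on $B$) $\tfrac{d}{u}\,u\,\wt{\cL}_{N,u/d}$ evaluated along the appropriate level, so that $\ov{\IP}[\rho_{N,u}\in O]$ can be rewritten exactly as $\ov{\IP}[\tfrac1{t}\wt{\cL}_{N,t}\in A]$ with $t=\tfrac{u}{d}N^{d-2}$.

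The second step applies the left-hand (lower) inequality of~(\ref{5.11}): for $N\ge N_0(A)$, any fixed $t_1\ge c(A)$, and $t=\tfrac ud N^{d-2}\ge 2t_1$ (true for large $N$),
\[
\ov{\IP}\Big[\tfrac1t\wt{\cL}_{N,t}\in A\Big]\ \ge\ \exp\Big\{-t\,\tfrac{f_{N,A}(t_1)}{t_1}-2t_1 c'(A)\Big\}.
\]
Taking logarithms, dividing by $N^{d-2}$, and letting $N\to\infty$ with $t_1$ fixed gives
\[
\varliminf_N \tfrac1{N^{d-2}}\log\ov{\IP}[\rho_{N,u}\in O]\ \ge\ -\tfrac ud\,\varlimsup_N \tfrac{f_{N,A}(t_1)}{t_1}.
\]
So the third step is to control $\varlimsup_N f_{N,A}(t_1)$ for a suitable (large but fixed) $t_1$: by Lemma~\ref{lem5.1}, $\wt{\cL}_{N,t_1}$ converges in distribution to $\cL_{t_1,B}$, and since $A$ is open, the portmanteau theorem gives $\varliminf_N \ov{\IP}[\tfrac1{t_1}\wt{\cL}_{N,t_1}\in A]\ge \IP[\tfrac1{t_1}\cL_{t_1,B}\in A]$, i.e. $\varlimsup_N f_{N,A}(t_1)\le -\log\IP[\tfrac1{t_1}\cL_{t_1,B}\in A]$. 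Combining, and then letting $t_1\to\infty$,
\[
\varliminf_N \tfrac1{N^{d-2}}\log\ov{\IP}[\rho_{N,u}\in O]\ \ge\ \tfrac ud\,\varlimsup_{t_1\to\infty}\tfrac1{t_1}\log\IP\Big[\tfrac1{t_1}\cL_{t_1,B}\in A\Big].
\]
Now by Corollary~\ref{cor3.3} (or directly Theorem~\ref{theo3.2} with the open convex set $A$), $\tfrac1{t_1}\log\IP[\tfrac1{t_1}\cL_{t_1,B}\in A]\to -\inf_{\rho\in A\cap M_+(B)} I_B(\rho) = -\inf_{\rho\in A} I_1(\rho)$ (using $I_1=I_B$, cf.~(\ref{3.5}) with $v=1$, and $I_B=I$). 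Hence the right-hand side above equals $-\tfrac ud \inf_{\rho\in A} I_1(\rho)$, and by the scaling identity~(\ref{3.5}), $\tfrac ud\, I_1(\rho)=\tfrac1d I_u(u\rho)$, so $\tfrac ud\inf_{\rho\in A}I_1(\rho)=\tfrac1d\inf_{\mu\in uA}I_u(\mu)=\tfrac1d\inf_{\mu\in O}I_u(\mu)$, which is exactly the claimed bound~(\ref{5.12}).

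The main obstacle is the bookkeeping in the first step: one must correctly match the discrete scaling in~(\ref{4.5}) (level $d\alpha N^{2-d}$, normalization $\tfrac1{dN^2}$) with the definitions~(\ref{0.1}) of $\rho_{N,u}$, (\ref{5.2}) of $\wt{\cL}_{N,t}$, and~(\ref{5.5}) of $f_{N,A}$, so that $\{\rho_{N,u}\in O\}$ really is $\{\tfrac1t\wt{\cL}_{N,t}\in A\}$ with $t=\tfrac ud N^{d-2}$ — in particular one must check that the convex base set $O=uA$ and the choice $\alpha=u/d$ are consistent. A secondary subtlety is the interchange of the limits $N\to\infty$ and $t_1\to\infty$: here the subadditive structure is essential, since~(\ref{5.11}) holds for every fixed $t_1\ge c(A)$ uniformly in large $N$, which is what licenses first sending $N\to\infty$ and then $t_1\to\infty$. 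One also has to note that it suffices to prove~(\ref{5.12}) for $O$ ranging over the base~(\ref{5.1}), since for a general open $O'$ and $\mu\in O'$ with $I_u(\mu)<\infty$ one can find such an $O\subseteq O'$ containing $\mu$ with $\inf_O I_u$ arbitrarily close to $I_u(\mu)$, using lower semicontinuity of $I_u$ (cf.~(\ref{3.2}), (\ref{3.5})); this standard reduction I would state at the outset.
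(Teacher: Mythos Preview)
Your proof is correct and follows essentially the same route as the paper: identify $\rho_{N,u}$ with $\tfrac{u}{t}\wt{\cL}_{N,t}$ for $t=\tfrac{u}{d}N^{d-2}$, apply the subadditive lower bound from Corollary~\ref{cor5.3}, pass to the Brownian limit via Lemma~\ref{lem5.1} for fixed $t_1$, and then let $t_1\to\infty$ using Theorem~\ref{theo3.2}/(\ref{3.9}) and the scaling (\ref{3.5}). One small remark: the theorem is already stated for $O$ of the basic form (\ref{5.1}), so your opening reduction (``it suffices to check the lower bound on a base'') and the assumption $I_u(\nu)<\infty$ are superfluous here --- the infimum on the right of (\ref{5.12}) is over all of $O$, not just at the center $\nu$, so no finiteness hypothesis on $I_u(\nu)$ is needed.
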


\begin{proof} We first use sub-additivity. We set $t = \frac{u}{d} \;N^{d-2}$, so that $\rho_{N,u} =  \frac{u}{t} \;\wt{\cL}_{N,t}$ (see (\ref{0.1}) and (\ref{4.5}), (\ref{5.2})), we see by the first inequality of (\ref{5.11}) that for any $t =  \frac{u}{d} \;N^{d-2} \ge 2 t_1 \ge c(A)$ and $N \ge N_0(A)$ (recall $O=uA$), 
\begin{equation*}
\ov{\IP} [\rho_{N,u} \in O] \stackrel{(\ref{5.1})}{=} \ov{\IP}  \Big[\mbox{\f $\dis\frac{1}{t}$} \;\wt{\cL}_{N,t} \in A\Big] \ge \exp\Big\{ -t \; \mbox{\f $\dis\frac{f_{N,A}(t_1)}{t_1}$} - 2t_1 \,c'(A)\Big\}.
\end{equation*}
Hence, for any $t_1 \ge c(A)$,
\begin{equation}\label{5.13}
\underset{N}{\underline{\lim}}\; \mbox{\f $\dis\frac{1}{N^{d-2}}$} \; \log \ov{\IP}[\rho_{N,u} \in O] \ge - \mbox{\f $\dis\frac{u}{d}$}  \;\overline{\lim\limits_N} \;  \mbox{\f $\dis\frac{f_{N,A}(t_1)}{t_1}$} \;.
\end{equation}
We now use convergence in law, and note that by Lemma \ref{lem5.1}
\begin{equation*}
\underset{N}{\underline{\lim}} \; \ov{\IP}[\wt{\cL}_{N,t_1} \in t_1 A] \ge \IP[\cL_{t_1,B} \in t_1 A].
\end{equation*}
Taking logarithms (recall the notation from (\ref{5.5}) and (\ref{3.10})), we find that
\begin{equation*}
- \overline{\lim\limits_N}  f_{N,A}(t_1) \ge - f_{A}(t_1).
\end{equation*}
We are now in position to use the key large deviation result from Section 3. Specifically, coming back to (\ref{5.13}) we see that for $t_1 \ge c(A)$ 
\begin{equation}\label{5.14}
\underset{N}{\underline{\lim}} \;\mbox{\f $\dis\frac{1}{N^{d-2}}$}  \log \ov{\IP} [\rho_{N,u} \in O] \ge - \mbox{\f $\dis\frac{u}{d}$}  \; \mbox{\f $\dis\frac{f_A(t_1)}{t_1}$}  \underset{t_1 \r \infty}{\stackrel{(\ref{3.9})}{\longrightarrow}} -  \mbox{\f $\dis\frac{u}{d}$}  \; \inf\limits_{\rho \in A} I_B(\rho) \stackrel{(\ref{3.5})}{=} - \inf\limits_{\mu \in O} \; \mbox{\f $\dis\frac{1}{d}$}  \;I_u(\mu) \,.
\end{equation}
This proves Theorem \ref{theo5.4}.
\end{proof}

We now turn to the proof of the asymptotic upper bound. An argument based on the convergence of $\Lambda_N(V)$ to $\Lambda(V)$ (or on the asymptotic domination of $\Lambda_N(V)$ by $\Lambda(V)$), for all $V$ in $C(B)$ does not seem straightforward. Instead, we use a strategy, which is in the spirit of $\Gamma$-convergence for the functions $I_N$, see Proposition \ref{prop5.6} below, and Proposition 7.2, p.~68 of \cite{Dalm93}. Another possible, although somewhat indirect, route might be to use the isomorphism theorem of \cite{Szni12b} and the large deviation principles on the empirical distribution functional of the Gaussian free field on $\IZ^d$, proved in \cite{BoltDeus93}.

\begin{theorem}\label{theo5.5} (Large deviation upper bound, $u > 0$, $O$ as in (\ref{5.1}))
\begin{equation}\label{5.15}
\overline{\lim\limits_N} \; \mbox{\f $\dis\frac{1}{N^{d-2}}$}  \; \log \ov{\IP}[\rho_{N,u} \in O] \le - \inf\limits_{\mu \in \ov{O}} \; \mbox{\f $\dis\frac{1}{d}$}  \;I_u(\mu)
\end{equation}

\n
(with $\ov{O}$ the closure of $O$ in $M_+(B)$).
\end{theorem}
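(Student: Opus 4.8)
The plan is to combine the sub-additive upper bound of Corollary~\ref{cor5.3} with a lower-semicontinuity (``$\Gamma$-liminf'') statement comparing the discrete rate functions $I_N$ of (\ref{5.10}) with the continuous rate function $I_B$, the latter comparison being the essential new ingredient (I would isolate it as Proposition~\ref{prop5.6}). \textbf{Step 1 (reduction via sub-additivity).} As in the proof of Theorem~\ref{theo5.4}, set $t=\tfrac{u}{d}N^{d-2}$, so that $\rho_{N,u}=\tfrac{u}{t}\,\wt{\cL}_{N,t}$ and $\{\rho_{N,u}\in O\}=\{\tfrac1t\wt{\cL}_{N,t}\in A\}$. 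Fixing $t_1=c(A)$, the rightmost inequality of (\ref{5.11}) applies for all large $N$ and yields
\[
\ov{\IP}[\rho_{N,u}\in O]\ \le\ \exp\Big\{-\tfrac{u}{d}\,N^{d-2}\,\inf_{\rho\in A}I_N(\rho)\Big\}.
\]
Taking $\tfrac{1}{N^{d-2}}\log(\cdot)$ and then $\limsup_N$, and using $O=uA$, $\ov{O}=u\ov{A}$ together with $I_u(\cdot)=u\,I_B(\cdot/u)$ from (\ref{3.5}), the asserted bound (\ref{5.15}) reduces to the purely analytic claim
\[
\liminf_{N}\ \inf_{\rho\in A}I_N(\rho)\ \ge\ \inf_{\rho\in\ov{A}}I_B(\rho).
\]

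\textbf{Step 2 (compactness of near-minimizers).} If the left-hand side above is $+\infty$ there is nothing to prove; otherwise choose a subsequence $N_k$ realizing the $\liminf$ and $\rho_{N_k}\in A$ with $I_{N_k}(\rho_{N_k})\le\inf_{\rho\in A}I_{N_k}(\rho)+\tfrac1k$, so that $M:=\sup_k I_{N_k}(\rho_{N_k})<\infty$. Writing $\rho_{N_k}=\tfrac{1}{N_k^d}\sum_{y\in B_{N_k}}h_{N_k}(y)\delta_y$ and letting $\varphi_{N_k}$ be the minimizer in (\ref{4.11}) (harmonic off $B_{N_k}$, vanishing at infinity, with $1+\varphi_{N_k}=\sqrt{h_{N_k}}$ on $B_{N_k}$ and $\cE_{N_k}(\varphi_{N_k},\varphi_{N_k})=I_{N_k}(\rho_{N_k})$), the discrete Sobolev inequality on $\IL_N$ (uniform in $N$, with $2^\ast=\tfrac{2d}{d-2}$) combined with H\"older on the bounded set $B_{N_k}$ gives $\langle\rho_{N_k},1_B\rangle=\|1+\varphi_{N_k}\|_{L^2(B_{N_k})}^2\le c(B)\big(1+I_{N_k}(\rho_{N_k})\big)\le c(B)(1+M)$. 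Hence a further subsequence of $\rho_{N_k}$ converges weakly in $M_+(B)$ to some $\rho$, and since $A$ is cut out by finitely many constraints involving continuous test functions (cf.\ (\ref{5.1})), $\rho\in\ov{A}$.

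\textbf{Step 3 ($\Gamma$-liminf inequality).} It remains to prove $\liminf_k I_{N_k}(\rho_{N_k})\ge I_B(\rho)$. Interpolating the lattice functions $\varphi_{N_k}$ to functions $\ov{\varphi}_{N_k}$ on $\IR^d$, one obtains a sequence bounded in $L^{2^\ast}(\IR^d)$ with uniformly bounded Dirichlet energy; extracting a weakly convergent subsequence yields a limit $\varphi$ with $\tfrac12\int_{\IR^d}|\nabla\varphi|^2\,dy\le\liminf_k\cE_{N_k}(\varphi_{N_k},\varphi_{N_k})$ by weak lower semicontinuity. By compact Sobolev embedding on a neighbourhood of $B$, $\ov{\varphi}_{N_k}\to\varphi$ in $L^2_{loc}$ and (using the $L^{2^\ast}$ bound for uniform integrability) $(1+\ov{\varphi}_{N_k})^2\to(1+\varphi)^2$ in $L^1_{loc}$; comparing with $\langle\rho_{N_k},g\rangle=\tfrac{1}{N_k^d}\sum_{y\in B_{N_k}}(1+\varphi_{N_k}(y))^2 g(y)$ for $g\in C(B)$ identifies $\rho=(1+\varphi|_B)^2\,m_B$. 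As $1+\varphi\ge0$ a.e.\ on $B$ (being a limit of $\sqrt{h_{N_k}}\ge0$), this means $\varphi=\sqrt{d\rho/dm_B}-1$ a.e.\ on $B$; after an innocuous far-field truncation putting $\varphi$ into $H^1(\IR^d)$ at arbitrarily small energy cost (as in the proof of Lemma~\ref{lem3.1}), the variational formula (\ref{3.4}) gives $I_B(\rho)\le\tfrac12\int_{\IR^d}|\nabla\varphi|^2\,dy\le\liminf_k I_{N_k}(\rho_{N_k})$, completing the reduction and hence the proof.

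\textbf{Main obstacle.} The delicate point is Step~3, the passage from the scaled lattice $\IL_N$ to the continuum: one must pick the interpolation (or mollification) of the harmonic lattice functions so that the continuous Dirichlet energy is controlled by — and in the limit not smaller than — the discrete energy $\cE_N$, extract a weak $H^1$ limit with no loss of energy, and, above all, identify the trace of the limit on $B$ with $\sqrt{d\rho/dm_B}-1$, which requires controlling the convergence of $(1+\varphi_{N_k})^2$ on $B$ and ruling out any concentration of mass on $\partial B$. Everything else is comparatively routine: the sub-additive upper bound is furnished by Corollary~\ref{cor5.3}, and the compactness of near-minimizers in Step~2 rests only on the uniform discrete Sobolev inequality on $\IL_N$.
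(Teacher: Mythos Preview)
Your overall strategy matches the paper's: both reduce (\ref{5.15}) via the sub-additive upper bound in Corollary~\ref{cor5.3} to the $\Gamma$-liminf inequality $\liminf_N\inf_{\ov{A}}I_N\ge\inf_{\ov{A}}I_B$, which the paper isolates as Proposition~\ref{prop5.6}. The difference lies entirely in the proof of that inequality. The paper first applies a discrete cut-off lemma (Lemma~\ref{lem5.7}): using Harnack's inequality and gradient estimates for discrete harmonic functions, it modifies each $\varphi_\ell$ outside a large box $C_R$ so that it becomes compactly supported, with energy increase at most a factor $1+c(B)R^{2-d}$. It then extends these truncated lattice functions to \emph{step functions} $\Phi_\ell$ on $\IR^d$, obtains strong $L^2(\IR^d)$ compactness via the Kolmogorov--Riesz criterion (equicontinuity of translations, (\ref{5.35}), (\ref{5.36})), identifies the limiting measure, and gets the energy lower bound by passing to Fourier transforms and applying Fatou (see after (\ref{5.41})); finally $R\to\infty$. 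Your route instead keeps the full harmonic extension, uses the discrete Sobolev inequality for an $L^{2^\ast}$ bound, interpolates, and relies on weak $\dot H^1$ compactness plus compact Sobolev embedding on bounded sets, deferring the far-field truncation to the continuous level. This is closer to textbook $\Gamma$-convergence and avoids the potential-theoretic Lemma~\ref{lem5.7}; the cost is that you must be explicit about an interpolation whose continuous Dirichlet energy does not exceed $(1+o(1))\cE_{N_k}$ (piecewise-affine on a Kuhn simplicial mesh works; multilinear on cubes does not without extra terms), which is exactly the obstacle you flag. Two minor remarks: your Step~2 Sobolev argument to bound the mass of $\rho_{N_k}$ is unnecessary, since $f_1=1_B$ in (\ref{5.1}) already makes $\ov{A}$ compact; and the reference to ``the proof of Lemma~\ref{lem3.1}'' for the far-field truncation is misplaced --- no such argument appears there, though the truncation itself is indeed harmless (alternatively, work directly in the extended Dirichlet space $\cF_e$, cf.\ below (\ref{2.27}), and no truncation is needed).
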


\begin{proof} We first exploit sub-additivity. Setting $t = \frac{u}{d} \;N^{d-2}$, we know by Corollary \ref{cor5.3} that for $t \ge 2 c(A)$, $N \ge N_0(A)$ (recall $O=uA$),
\begin{equation*}
\ov{\IP} [\rho_{N,u} \in O] \stackrel{(\ref{5.1})}{=} \ov{\IP}  \Big[ \; \mbox{\f $\dis\frac{1}{t}$}  \;\wt{\cL}_{N,t} \in A\Big] \stackrel{(\ref{5.11})}{\le}  \exp\big\{ -t \;  \inf\limits_{\rho \in A} \; \;I_N(\rho)\big\}.
\end{equation*}
Hence, we find that
\begin{equation}\label{5.16}
\overline{\lim\limits_N} \; \mbox{\f $\dis\frac{1}{N^{d-2}}$}  \; \log \ov{\IP}[\rho_{N,u} \in O] \le - \mbox{\f $\dis\frac{u}{d}$} \; \underset{N}{\underline{\lim}} \; \inf\limits_{\rho \in A} \;I_N(\mu)\,.
\end{equation}
The proof of (\ref{5.15}) relies on the crucial next proposition. 

\begin{proposition}\label{prop5.6} ($K$ compact subset of $M_+(B)$)
\begin{equation}\label{5.17}
\underset{N}{\underline{\lim}} \; \inf\limits_{\rho \in K} \;I_N(\rho) \ge \inf\limits_{\rho \in K} \;I(\rho) \; \mbox{(see (\ref{3.1}), (\ref{5.10}) for notation)}\,.
\end{equation}
\end{proposition}

\bigskip
Let us admit Proposition \ref{prop5.6} for the time being, and first complete the proof of Theorem \ref{theo5.5}. By (\ref{5.16}) and (\ref{5.17}) (with $K = \ov{A}$), we see that
\begin{equation*}
\overline{\lim\limits_N} \; \mbox{\f $\dis\frac{1}{N^{d-2}}$} \;\log \ov{\IP}[\rho_{N,u} \in O] \le - \mbox{\f $\dis\frac{u}{d}$} \; \inf\limits_{\ov{A}} \;I  \underset{(\ref{5.1})}{\stackrel{(\ref{3.5})}{=}} - \inf\limits_{\ov{O}} \; \mbox{\f $\dis\frac{1}{d}$} \;I_u.
\end{equation*}
This proves Theorem \ref{theo5.5} (conditionally on Proposition \ref{prop5.6}).
\end{proof}

There remains to prove Proposition \ref{prop5.6}.

It may be useful at this point to provide an outline of its proof. In essence, assuming that the left-hand side of (\ref{5.17}) is finite (otherwise (\ref{5.17}) is obvious), we will consider a minimizing subsequence $
\mu_\ell$ in $K(\subseteq M_+ (B)$) such that $I_{N_\ell}(\mu_\ell)$ tends to the left-hand side of (\ref{5.17}) and $N_\ell$ tends to infinity. By the relationship between $I_{N_\ell}$ and $\cE_{N_\ell}$ from (\ref{5.10}) and (\ref{4.12}), we will recast $I_{N_\ell}(\mu_\ell)$ as $\cE_{N_\ell}(\varphi_\ell, \varphi_\ell)$, where $\varphi_\ell$ are functions on $\IL_{N_\ell}$, harmonic outside $B_{N_\ell}$, tending to zero at infinity, with value at least $-1$ in $B_{N_\ell}$, and such that $\mu_\ell$ has density $1_{B_{N_\ell}}(1 + \varphi_\ell)^2 N_\ell^{-d}$ with respect to the counting measure on the scaled lattice $\IL_{N_\ell}$. With the help of a cut-off lemma (see Lemma \ref{lem5.7} below), we will replace the sequence $(\varphi_\ell)$ by a sequence $(\overline{\varphi}_\ell)$. In particular, $\overline{\varphi}_\ell$ will coincide with $\varphi_\ell$ on $B_{N_\ell}$, but will vanish outside a fixed compact set, independent of $\ell$, and $\cE_{N_\ell}(\overline{\varphi}_\ell, \overline{\varphi}_\ell)$ will not be substantially bigger than $\cE_{N_\ell}(\varphi_\ell,\varphi_\ell)$. We will then introduce a step function $\Phi_\ell$ on $\IR^d$, constant on cubes of side-length $\frac{1}{N_\ell}$, which coincides with $\overline{\varphi}_\ell$ on $\IL_{N_\ell}$. Making use of the controls on $\overline{\varphi}_\ell$ stemming from the Dirichlet form and the compact support of the functions $\Phi_\ell$, we will show that $\Phi_\ell$, $\ell \ge 1$, is relatively compact in $L^2(\IR^d)$. We will extract a convergent subsequence to a compactly supported $\Phi$ in $L^2(\IR^d)$, having value at least $-1$ on $B$, such that the measure $\mu = (1 + \Phi)^2 m_B$ belongs to $K$ (the compact subset of $M_+(B)$ in the statement of Proposition \ref{prop5.6}), and $I(\mu)$ is not substantially bigger than the left-hand side of (\ref{5.17}). This will show that the infimum of $I$ over $K$ is smaller or equal to $\underline{\lim}_N$ $\underline{\inf}_K I_N$ and conclude the proof of Proposition \ref{prop5.6}.

\bigskip\n
{\it Proof of Proposition \ref{prop5.6}.}
We begin with a cut-off lemma for functions on $\IL_N$, which are harmonic off $B_N$ and tend to $0$ at infinity. This lemma, as mentioned above, will be an important ingredient when proving the relative compactness of a suitable nearly minimizing sequence we later construct.

\medskip
We introduce $L_B \ge 1$ the smallest positive integer such that (see the beginning of Section 1 for notation)
\begin{equation}\label{5.18}
B \subseteq B_\infty(0,L_B),
\end{equation}
and for $R \ge 0$ integer, the closed box
\begin{equation}\label{5.19}
C_R = B_\infty \big(0,L_B(R + 2)\big) \supseteq B,
\end{equation}

\n
so that $d_\infty\big(B_\infty(0,L_B),  \partial C_R\big) \ge L_B(R+1)$. For functions $\varphi$ defined on $\IL_N$, we use, as a shorthand, the notation $\sup_B \varphi$, $\inf_{C_R} \varphi, \dots,$ in place of $\sup_{B \cap \IL_N} \varphi$, $\inf_{C_R \cap \IL_N} \varphi, \dots$~.

\medskip
We are now ready to state and prove the cut-off lemma. The reader may choose to first skip its proof.

\begin{lemma}\label{lem5.7} (Cut-off lemma, $R \ge 1$ integer)

\medskip
Let $N \ge 1$, and $\varphi$ on $\IL_N$ be harmonic outside $B_N$ and tend to $0$ at infinity. There exists $\ov{\varphi}$ on $\IL_N$ such that
\begin{equation}\label{5.20}
\hspace{-4cm} \left\{ \begin{array}{rl}
{\rm i)}  &\mbox{$\ov{\varphi} = \varphi$, on $C_R$,}
\\[1ex]
{\rm ii)} & \mbox{$\ov{\varphi} = 0$ outside $C_{100R}$,}
\\[1ex]
{\rm iii)} & \cE_N(\ov{\varphi},\ov{\varphi}) \le \cE_N(\varphi,\varphi)\Big(1 +  \mbox{\f $\dis\frac{c(B)}{R^{d-2}}$}\Big).
\end{array}\right.
\end{equation}
\end{lemma}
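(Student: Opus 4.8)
The plan is to build $\ov{\varphi}$ by multiplying $\varphi$ by a suitable discrete cut-off function supported in $C_{100R}$ and equal to $1$ on $C_R$, and then estimate the resulting Dirichlet energy. The natural choice is a radial (in the $\ell^\infty$ or, more conveniently, $\ell^2$ sense) "tent" profile: set $\ov{\varphi} = \chi \varphi$, where $\chi$ on $\IL_N$ takes the value $1$ on $C_R$, decays like a harmonic-type profile in the annular region between $C_R$ and $C_{100R}$, and vanishes outside $C_{100R}$. Since $\varphi$ is harmonic outside $B_N \subseteq C_R$ and tends to $0$ at infinity, on the annulus $C_{100R}\setminus C_R$ the function $\varphi$ is harmonic, and one can control $\|\varphi\|$ there both in sup-norm (via the maximum principle applied on $C_R$ and on large boxes, together with decay at infinity) and, more importantly, in the Dirichlet energy $\cE_N$ restricted to that annulus. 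The key geometric fact from \eqref{5.19} is that $d_\infty(B_\infty(0,L_B),\partial C_R)\ge L_B(R+1)$, so the annular region between $C_R$ and $C_{100R}$ has width comparable to $R$ (times $L_B$) on the scale of the box, which is what produces the gain $R^{-(d-2)}$.

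First I would record the Leibniz-type identity for the discrete gradient: for neighbours $y\sim y'$ in $\IL_N$,
\begin{equation*}
\ov\varphi(y')-\ov\varphi(y) = \chi(y')\big(\varphi(y')-\varphi(y)\big) + \varphi(y)\big(\chi(y')-\chi(y)\big),
\end{equation*}
so that $\big(\ov\varphi(y')-\ov\varphi(y)\big)^2 \le 2\chi(y')^2\big(\varphi(y')-\varphi(y)\big)^2 + 2\varphi(y)^2\big(\chi(y')-\chi(y)\big)^2$. Summing over edges and using $0\le\chi\le 1$ gives
\begin{equation*}
\cE_N(\ov\varphi,\ov\varphi) \le 2\,\cE_N(\varphi,\varphi) + \frac{c}{N^{d-2}}\dsl_{y\sim y'}\varphi(y)^2\big(\chi(y')-\chi(y)\big)^2.
\end{equation*}
The second term is supported on the annulus $A_R := (C_{100R}\setminus C_R)\cap\IL_N$ where $\varphi$ is harmonic. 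One then chooses $\chi$ so that the discrete gradient of $\chi$ on each edge is of order $1/(N\cdot \mathrm{dist})$, mimicking the harmonic profile between two concentric boxes of comparable $\ell^\infty$-radius; the standard estimate is $\sum_{y\sim y'}\big(\chi(y')-\chi(y)\big)^2 \lesssim N^{d-2}\,\mathrm{cap}$-type quantity $\asymp N^{d-2} R^{-(d-2)}\cdot(\text{number of boxes})^{\ldots}$, more precisely the Dirichlet energy of the optimal cut-off between $C_R$ and $C_{100R}$ is $\asymp (L_B R)^{d-2}$ in unscaled units, i.e. $\asymp N^{d-2}$ times $(\text{geometric factor})$. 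Feeding in the sup-bound $\sup_{A_R}\varphi^2 \le \sup_{\partial C_R}\varphi^2$ (maximum principle for the harmonic function $\varphi$ on the bounded region $C_{100R}\setminus C_R$, using that $\varphi\to 0$ at infinity to control the outer behaviour — one actually works on $C_{100R}$ with the true boundary values and uses harmonicity) together with a Poincaré/energy comparison bounding $\sup_{\partial C_R}\varphi^2\cdot(L_B R)^{-(d-2)}$ by $c(B)\,\cE_N(\varphi,\varphi)\,R^{-(d-2)}$ yields \eqref{5.20} iii) with the factor $2$ absorbed — and here one must be slightly more careful to get the clean constant $1+c(B)R^{-(d-2)}$ rather than $2+\cdots$.

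To get the multiplicative constant $1$ in front of $\cE_N(\varphi,\varphi)$ rather than $2$, I would not use the crude $(a+b)^2\le 2a^2+2b^2$ split globally, but rather note that $\chi=1$ on $C_R\supseteq B_N$, hence on all edges with at least one endpoint in $C_R$ one has $\ov\varphi(y')-\ov\varphi(y)=\varphi(y')-\varphi(y)$ when both endpoints lie in $C_R$; the only edges contributing a discrepancy are those meeting $A_R$, where $\varphi$ is harmonic and small. On the complement of $C_R$ one can then compare $\cE_N(\ov\varphi,\ov\varphi)$ restricted to $\IL_N\setminus C_R$ directly with $\cE_N(\varphi,\varphi)$ restricted there via the variational (capacity-minimizing) characterization: among all functions agreeing with $\varphi$ on $\partial C_R$ and vanishing outside $C_{100R}$, the energy is minimized by the discrete harmonic one, whose energy exceeds that of $\varphi$ (the true harmonic extension to infinity) by at most a factor $1+c(B)R^{-(d-2)}$, this being exactly the ratio of capacities $\mathrm{cap}(C_R;C_{100R})/\mathrm{cap}(C_R;\infty)=1+O(R^{-(d-2)})$. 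I expect the main obstacle to be precisely this last bookkeeping: keeping the leading constant equal to $1$ forces one to argue via the capacity/energy-minimization comparison between the finite box $C_{100R}$ and infinity, rather than a naive Leibniz estimate, and to carry the geometric constant $L_B$ (hence the $c(B)$ dependence) correctly through the scaling by $N$. The uniformity in $N$ is automatic since all the capacity ratios are scale-invariant up to discretization errors that are themselves $O(R^{-(d-2)})$ or better.
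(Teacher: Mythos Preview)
Your outline has the right overall shape (agree with $\varphi$ on $C_R$, kill it by $C_{100R}$, show the extra energy is $O(R^{-(d-2)})\cE_N(\varphi,\varphi)$), but two of the quantitative steps you lean on are not correct as stated, and fixing them requires precisely the ingredient the paper supplies.

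\medskip
\textbf{The capacity ratio claim is false.} You write that ${\rm cap}(C_R;C_{100R})/{\rm cap}(C_R;\infty)=1+O(R^{-(d-2)})$. In fact both capacities scale like $(L_BR)^{d-2}$, and their ratio is a \emph{dimensional constant} close to $(1-100^{-(d-2)})^{-1}$, independent of $R$. So the variational comparison ``harmonic in the annulus versus harmonic to infinity'' only gives a constant-factor bound on the exterior energy, not $1+O(R^{-(d-2)})$. What saves the day is not that this ratio is close to $1$, but that the exterior energy of $\varphi$ beyond $C_R$ is itself already $O(R^{-(d-2)})\,\cE_N(\varphi,\varphi)$, because $\varphi$ has decayed.

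\medskip
\textbf{The decay of $\varphi$ on $\partial C_R$ is the missing estimate.} Whichever construction of $\ov\varphi$ you use (multiplicative cut-off or harmonic-in-annulus), the error term is controlled by $\sup_{\partial C_R}\varphi^{2}$ times a quantity of order $R^{d-2}$ (the relative capacity, or $\cE_N(\chi,\chi)$). To land at $c(B)R^{-(d-2)}\cE_N(\varphi,\varphi)$ you therefore need
\[
\sup_{\partial C_R}|\varphi|^{2}\;\le\; \frac{c(B)}{R^{2(d-2)}}\,\cE_N(\varphi,\varphi),
\]
which is much sharper than what you call a ``Poincar\'e/energy comparison''. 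The obstruction is that $\varphi$ can change sign on $B_N$, so Harnack does not apply to $\varphi$ directly. The paper handles this by writing $\varphi=\psi-\gamma$ outside $B_N$, where $\psi,\gamma\ge 0$ are the harmonic extensions of $\varphi_+|_{B_N}$ and $\varphi_-|_{B_N}$. Harnack and chaining on $\partial C_0$ give $\max_{\partial C_0}\psi\le c\min_{\partial C_0}\psi$; the standard hitting-probability bound then yields $\max_{\partial C_R}\psi\le cR^{-(d-2)}\min_{\partial C_0}\psi$; and the capacity lower bound $\cE_N(\psi,\psi)\ge c(\min_{\partial C_0}\psi)^2$ together with $\cE_N(\psi,\psi)\le\cE_N(\varphi,\varphi)$ (contraction of the trace form under $|\cdot|$) closes the loop. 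The same is done for $\gamma$. With these ingredients in hand, either your multiplicative cut-off or (as the paper does) the explicit formula $\ov\varphi=\psi\wedge(a_Rq_R)-\gamma\wedge(b_Rq_R)$ outside $C_R$ yields (iii) with leading constant $1$; the point is that the entire exterior contribution is small, so the factor $2$ from Leibniz never threatens the main term.

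\medskip
In short: the scheme is fine, but the claimed capacity ratio is wrong, and the pointwise decay $\sup_{\partial C_R}|\varphi|$ in terms of $\cE_N(\varphi,\varphi)$ needs the positive/negative Harnack decomposition that the paper carries out.
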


\begin{proof}
To simplify notation, all constants in the proof implicitly depend on $d$ and $B$.

\medskip
We let $\psi$ and $\gamma$ stand for the harmonic extensions of $\varphi_+ = \max\{\varphi,0\}$ and $\varphi_- = \max\{-\varphi,0\}$ outside $B_N$ (on $\IL_N \backslash B_N)$, which tend to $0$ at infinity. Since $\wt{\cE}_N$ in (\ref{4.11}) is a Dirichlet form, we know that the restrictions $\wt{\varphi}, \wt{\varphi}_+, \wt{\varphi}_-$ of $\varphi, \varphi_+, \varphi_-$ to $B_N$ satisfy 
\begin{equation}\label{5.21}
\begin{split}
\cE_N(\psi,\psi)  & \stackrel{(\ref{4.12})}{=} \wt{\cE}_N(\wt{\varphi}_+,\wt{\varphi}_+) \le \wt{\cE}_N (\wt{\varphi},\wt{\varphi}) \stackrel{(\ref{4.12})}{=} \cE_N (\varphi,\varphi) \; \mbox{and}
\\[1ex]
\cE_N(\gamma,\gamma) &\;\; \le \cE_N(\varphi,\varphi) \; \mbox{(in a similar fashion)}.
\end{split}
\end{equation}

\n
By the Harnack inequality and chaining (see Theorem 1.7.2, p.~42 of \cite{Lawl91}), we have
\begin{equation}\label{5.22}
\max\limits_{ \partial C_0} \psi \le c \;\min\limits_{ \partial C_0} \psi \; \mbox{and} \; \max\limits_{ \partial C_0} \gamma \le c \;\min\limits_{ \partial C_0} \gamma
\end{equation}

\n
(we recall that all constants in the proof of Lemma \ref{lem5.7} implicitly depend on $d$ and $B$). Since $\psi$ is harmonic outside $C_0$ and tends to zero at infinity, it follows, by a stopping argument, that for $y$ in $ \partial C_R$, $\psi(y)$ is smaller than the product of $\max_{ \partial C_0} \psi$ with the probability for the walk on $\IL_N$ starting at $y$ to reach $ \partial C_0$. A similar bound holds for $\gamma$. By (\ref{5.22}) and classical random walk estimates, we obtain
\begin{equation}\label{5.23}
\max\limits_{ \partial C_R} \psi \le \mbox{\f $\dis\frac{c}{R^{d-2}}$} \;\min\limits_{ \partial C_0} \psi \; \stackrel{\rm def}{=} a_R, \; \max\limits_{ \partial C_R} \gamma \le  \mbox{\f $\dis\frac{c}{R^{d-2}}$} \;\min\limits_{ \partial C_0} \gamma \stackrel{\rm def}{=} b_R, \; \mbox{for $R \ge 1$}.
\end{equation}

\n
By estimates on the discrete gradient of harmonic functions in large balls, see Theorem 1.7.1, p.~42 of \cite{Lawl91}, we have for $R \ge 1$, $y \in \IL_N \backslash C_R$ and $y' \sim y$ in $\IL_N$,
\begin{equation}\label{5.24}
|\psi(y') - \psi(y)| \le \mbox{\f $\dis\frac{c}{R^{d-2}}$} \; \mbox{\f $\dis\frac{1}{RN}$} \; \min\limits_{ \partial C_0} \psi, \; |\gamma(y') - \gamma(y)| \le \mbox{\f $\dis\frac{c}{R^{d-2}}$} \; \mbox{\f $\dis\frac{1}{RN}$}\; \min\limits_{ \partial C_0} \gamma.
\end{equation}
With a similar notation as below (\ref{4.11}), we define for $R \ge 1$
\begin{equation*}
q_R(y) = P^N_y [H_{C_R \cap \IL_N} > T_{C_{100R} \cap \IL_N}], \; \mbox{for $y \in \IL_N$},
\end{equation*}

\n
the probability for the walk on $\IL_N$ starting at $y$ to exit $C_{100R}$ before entering $C_R$. We then define
\begin{equation}\label{5.25}
\begin{split}
\ov{\varphi}(y) & = \varphi(y) \; \mbox{on} \; C_R \cap \IL_N,
\\[1ex]
& = \psi(y) \wedge \big(a_R \,q_R(y)\big) -  \gamma(y) \wedge \big(b_R \,q_R(y)\big), \;\mbox{on} \; \IL_N \backslash C_R
\end{split}
\end{equation}

\n
(note that the expressions on both lines are equal for $y \in  \partial C_R \cap \IL_N$).

\medskip
For simple random walk on $\IZ^d$, the capacity of $(NC_R) \cap \IZ^d $ relative to  $(NC_{100R}) \cap \IZ^d $ is at most $c(NR)^{d-2}$, see for instance (\ref{5.26}) of \cite{Szni09b}. Moreover, it is equal to the Dirichlet form of the function $q_R(\frac{\cdot}{N})$. One thus has the bound
\begin{equation}\label{5.26}
\cE_N(q_R,q_R) \le c\,R^{d-2}.
\end{equation}

\n
Further, note that when $y' \sim y$ in $\IL_N$ (see below (\ref{4.9}) for notation) are not both in $C_R$,
\begin{equation*}
|\ov{\varphi}(y') - \ov{\varphi}(y) |^2  \le 2 (| \psi (y') - \psi(y)|^2 + a^2_R |q_R(y') - q_R(y)|^2 + |\gamma(y') - \gamma(y)|^2 + b^2_R |q_R (y') - q_R(y)|^2).
\end{equation*}

\n
Thus, coming back to (\ref{5.25}), we see that, denoting by $\sum '$ the summation over $y' \sim y$ in $\IL_N$, not both in $C_R$, one has
\begin{equation}\label{5.27}
\begin{split}
\cE_N(\ov{\varphi},\ov{\varphi})&  \le\; \cE_N (\varphi,\varphi) + \mbox{\f $\dis\frac{c}{N^{d-2}}$} \;\dsl \,\!' \big(|\psi(y') - \psi(y)|^2 + | \gamma(y') - \gamma(y)|^2\big)
\\
&\quad \; + 2 (a^2_R + b^2_R) \,\cE_N(q_R,q_R)
\\[1ex]
&\!\!\!\! \stackrel{(\ref{5.24})}{\le}\; \cE_N(\varphi,\varphi) + \Big(\mbox{\f $\dis\frac{c}{N^{d-2}}$} \;\dsl_{\ell \ge NR} \; \mbox{\f $\dis\frac{\ell^{d-1}}{(\frac{\ell}{N})^{2(d-2)}}$} \; \mbox{\f $\dis\frac{1}{\ell^2}$} + \mbox{\f $\dis\frac{c}{R^{d-2}}$} \Big) \big(\min\limits_{ \partial C_0} \psi^2 + \min\limits_{ \partial C_0} \gamma^2\big)
\\[1ex]
&\le \; \cE_N(\varphi,\varphi)  + \mbox{\f $\dis\frac{c}{R^{d-2}}$}\big(\min\limits_{ \partial C_0} \psi^2 + \min\limits_{ \partial C_0} \gamma^2\big).
\end{split}
\end{equation}

\n
Since the capacity of a finite subset of $\IZ^d$ is smaller than the Dirichlet form of any function in the extended Dirichlet space equal to 1 on the set, and $\psi$, resp.~$\gamma$, is bigger or equal to $\min_{ \partial C_0} \psi$, resp.~$\min_{ \partial C_0} \gamma$, on $ (\partial C_0) \cap \IL_N$, we find that
\begin{equation*}
\cE_N(\psi,\psi) \ge \mbox{\f $\dis\frac{d}{N^{d-2}}$} \;{\rm cap}_{\IZ^d} (N \partial C_0) (\min\limits_{ \partial C_0} \psi)^2 \ge c(\min\limits_{ \partial C_0} \psi)^2 \;\; \mbox{and likewise} \;\; \cE_N (\gamma,\gamma) \ge c(\min\limits_{ \partial C_0} \gamma)^2.
\end{equation*}

\n
Inserting these bounds in the last line of (\ref{5.27}) and using (\ref{5.21}), we find
\begin{equation*}
\cE_N(\ov{\varphi},\ov{\varphi}) \le \cE_N(\varphi,\varphi) \Big(1 +  \mbox{\f $\dis\frac{c}{R^{d-2}}$}\Big).
\end{equation*}

\n
Moreover, by (\ref{5.25}), $\ov{\varphi} = \varphi$ on $C_R$ and $0$ on $\IL_N \backslash C_{100R}$. We have proved Lemma \ref{lem5.7}. \hfill $\square$

\bigskip
We resume the proof of (\ref{5.17}). We denote by $\alpha \in [0,\infty]$ the left-hand side of (\ref{5.17}). Without loss of generality, we can assume $\alpha < \infty$, otherwise (\ref{5.17}) is immediate. We consider a subsequence $N_\ell$, $\ell \ge 1$, as well as sequences $\mu_\ell \in M_+(B)$, and $\varphi_\ell \ge -1$ on $B_{N_\ell}$, of functions on $\IL_{N_\ell}$ harmonic outside $B_{N_\ell}$ and tending to $0$ at infinity, such that using (\ref{5.10}) and (\ref{4.12})
\begin{equation}\label{5.28}
\mu_\ell \in K, \; \mbox{and} \; I_{N_\ell}(\mu_\ell) \r \alpha ~~(\stackrel{\rm def}{=} \underset{N}{\underline{\lim}} \; \inf\limits_{\rho \in K} \;I_N(\rho)),
\end{equation}
\begin{equation}\label{5.29}
\left\{ \begin{array}{rl}
{\rm i)}   & \quad \mu_\ell = \mbox{\f $\dis\frac{1}{N^d_\ell}$} \; \dsl_{y \in B_{N_\ell}} \big(1 + \varphi_\ell(y)\big)^2 \delta_y, 
\\[2ex]
{\rm ii)}  &\quad  I_{N_\ell}(\mu_\ell) = \cE_{N_\ell}(\varphi_\ell, \varphi_\ell).
\end{array}\right.
\end{equation}

\n
We choose $R \ge 1$, and construct with the help of Lemma \ref{lem5.7} a sequence $\ov{\varphi}_\ell, \ell \ge 1$, of functions on $\IL_{N_\ell}$ for which (\ref{5.20}) holds (with $\varphi$ replaced by $\varphi_\ell$ and $N$ by $N_\ell$). The functions $\ov{\varphi}_\ell$ vanish outside $C_{100R} \cap \IL_{N_\ell}$. Since the principal Dirichlet eigenvalue of the discrete Laplacian in a box of side-length $L \ge 1$ in $\IZ^d$ is at least $c \,L^{-2}$, see for instance \cite{GlimJaff81}, p.~185, we find that
\begin{equation}\label{5.30}
\|\ov{\varphi}_\ell\|_{L^2(\IL_{N_\ell})} \le c(B) \,R^2 \,\cE_{N_\ell} (\ov{\varphi}_\ell,\ov{\varphi}_\ell) \; \mbox{(and the limsup in $\ell$ is at most $c'(B) \,R^2 \alpha$)}.
\end{equation}

\n
The functions $\ov{\varphi}_\ell$ are defined on the different lattices $\IL_{N_\ell}$, and it is convenient to introduce the functions $\Phi_\ell$, $\ell \ge 1$, on $\IR^d$, which take the value $\ov{\varphi}_\ell(y)$ on $y + \frac{1}{N^\ell}\;[0,1)^d$, i.e.
\begin{equation}\label{5.31}
\Phi_\ell(z) = \dsl_{y \in \IL_{N_\ell}} \ov{\varphi}_\ell(y) \;1_{y + \frac{1}{N_\ell}[0,1)^d}(z), \; z \in \IR^d, \,\ell \ge 1.
\end{equation}
Note that, by construction, for all $\ell \ge 1$,
\begin{align}
&1 + \Phi_\ell(z) \ge 0, \; \mbox{if $z \in B$ and $d_\infty(z,\partial B) \ge \mbox{\f $\dis\frac{1}{N_\ell}$}$}, \label{5.32}
\\[2ex]
& \Phi_\ell = 0, \; \mbox{on} \; \IR^d \backslash C_{200R}, \label{5.33}
\\[2ex]
&\|\Phi_\ell\|_{L^2(\IR^d)} = \|\ov{\varphi}_\ell\|_{L^2(\IL_{N_\ell})} \; \mbox{(and $\sup\limits_\ell \|\Phi_\ell\|_{L^2(\IR^d)} < \infty$, by (\ref{5.30}))}. \label{5.34}
\end{align}

\n
We will now prove that the functions $\Phi_\ell$, $\ell \ge 1$, are equicontinuous in $L^2(\IR^d)$ with respect to translations:
\begin{equation}\label{5.35}
\lim\limits_{h \r 0} \;\sup\limits_{\ell \ge 1} \| \Phi_\ell(\cdot +h) - \Phi_\ell(\cdot)\|_{L^2(\IR^d)} = 0.
\end{equation}

\n
Using the triangle inequality and translation invariance, we can assume, without loss of generality, that $h$ is parallel to and pointing in the direction of $e_i$, the $i$-th vector of the canonical basis of $\IR^d$. For notational simplicity, we treat the case $i=1$ (the other cases are handled similarly).

\medskip
We write $N_\ell h = (k+r) e_1$, where $k \ge 0$, is an integer and $0 \le r < 1$ (both depend on $\ell$). We see that
\begin{equation*}
\begin{array}{l}
\|\Phi_\ell (\cdot + h) - \Phi_\ell (\cdot)\|^2_{L^2(\IR^d)} \le 2(a+b), \; \mbox{where}
\\[1ex]
a = \Big\| \Phi_\ell \Big( \cdot + \mbox{\f $\dis\frac{k}{N_\ell}$} \;e_1\Big) - \Phi_\ell(\cdot)\Big\|^2_{L^2(\IR^d)} \; \mbox{and} \; b = \Big\| \Phi_\ell \Big( \cdot + \mbox{\f $\dis\frac{r}{N_\ell}$} \;e_1\Big) - \Phi_\ell(\cdot)\Big\|^2_{L^2(\IR^d)}.
\end{array}
\end{equation*}

\n
Expressing $b$ in terms of $\ov{\varphi}_\ell$, we find that
\begin{equation*}
b = r \Big\| \ov{\varphi}_\ell \Big( \cdot + \mbox{\f $\dis\frac{e_1}{N_\ell}$}\Big) - \ov{\varphi}_\ell(\cdot)\Big\|^2_{L^2(\IL_{N_\ell})} \stackrel{(\ref{4.9})}{\le} c \; \mbox{\f $\dis\frac{r}{N^2_\ell}$} \; \cE_{N_\ell} (\ov{\varphi}_\ell, \ov{\varphi}_\ell) \le \mbox{\f $\dis\frac{c}{N^2_\ell}$} \;|h| \,\cE_{N_\ell}  (\ov{\varphi}_\ell, \ov{\varphi}_\ell).
\end{equation*}

\n
On the other hand, by the  triangle inequality and translation invariance, we have
\begin{equation*}
\begin{array}{l}
a \le k^2 \,\|\Phi_\ell  \Big( \cdot + \mbox{\f $\dis\frac{e_1}{N_\ell}$}\Big) - \Phi_\ell (\cdot)\Big\|^2_{L^2(\IR^d)} = k^2 \| \ov{\varphi}_\ell  \Big( \cdot + \mbox{\f $\dis\frac{e_1}{N_\ell}$}\Big)  - \ov{\varphi}_\ell (\cdot)\Big\|^2_{L^2(\IL_{N_\ell})}
\\[2ex]
\stackrel{(\ref{4.10})}{\le} c\;\mbox{\f $\dis\frac{k^2}{N^2_\ell}$}\;\cE_{N_\ell} (\ov{\varphi}_\ell, \ov{\varphi}_\ell) \le c\,|h|^2 \,\cE_{N_\ell} (\ov{\varphi}_\ell, \ov{\varphi}_\ell).
\end{array}
\end{equation*}

\n
Combining the bounds on $a$ and $b$, we see that for $h = |h|\,e_1$,
\begin{equation}\label{5.36}
\|\Phi_\ell(\cdot + h) - \Phi_\ell (\cdot) \|^2_{L^2(\IR^d)} \le c\,|h| \Big(|h| \vee \;\mbox{\f $\dis\frac{1}{N_\ell}$}\Big) \,\cE_{N_\ell}(\ov{\varphi}_\ell,\ov{\varphi}_\ell), \;\ \mbox{for} \; \ell \ge 1 .
\end{equation}

\n
The claim (\ref{5.35}) now follows since $\sup_\ell \cE_{N_\ell} (\ov{\varphi}_\ell,\ov{\varphi}_\ell) < \infty$.

\medskip
By (\ref{5.30}), (\ref{5.33}), (\ref{5.35}), and Theorem 2.21, p.~31 of \cite{Adam75}, we find that $\Phi_\ell$, $\ell \ge 1$, is a relatively compact subset of $L^2(\IR^d)$. 

\medskip
Hence, up to extraction of a subsequence (which we still denote by $\Phi_\ell$), we can assume that
\begin{equation}\label{5.37}
\Phi_\ell \underset{\ell}{\longrightarrow} \Phi \; \mbox{in} \;L^2(\IR^d),
\end{equation}
and by (\ref{5.32}), (\ref{5.33}), we see that
\begin{equation}\label{5.38}
\mbox{$1 + \Phi \ge 0$ ~a.e.\,on $B$ and $\Phi = 0$ ~a.e.\,on $\IR^d \backslash C_{200R}$}.
\end{equation}
By (\ref{5.37}), we see that
\begin{equation}\label{5.39}
(1 + \Phi_\ell)^2 \underset{\ell}{\longrightarrow} (1 + \Phi)^2  \; \mbox{in} \; L^1_{\rm loc}(\IR^d) .
\end{equation}

\n
By uniform integrability, it follows that the integral of $(1 + \Phi_\ell)^2$ over $\{z \in \IR^d$; $d_\infty(z,\partial B) \le \frac{1}{N_\ell}\}$ tends to $0$ with $\ell$. In addition, since $\ov{\varphi}_\ell = \varphi_\ell$ on $C_R \supseteq \{z \in \IR^d$; $d_\infty(z,B) \le 1\}$, cf.~(\ref{5.19}), we see by (\ref{5.29}) i) and (\ref{5.31}) that $\mu_\ell (B \cap \{z \in \IR^d$; $d_\infty(z,\partial B) \le \frac{1}{N_\ell}\}) \underset{\ell}{\longrightarrow} 0$. Thus, letting $B'_N$ stand for the $y \in \IL_N$ such that $y + \frac{1}{N} \,[0,1)^d \subseteq B$, we find that for $V \in C(B)$,
\begin{equation*}
\langle \mu_\ell,V\rangle - \dis\int_B (1 + \Phi_\ell)^2 V\,dz = \dsl_{y \in B'_{N_\ell}} \dis\int_{y + \frac{1}{N_\ell}[0,1)^d}\big(V(y) - V(z)\big) \big(1 + \Phi_\ell(z)\big)^2 dz + o(1), \;\mbox{as} \; \ell \r \infty.
\end{equation*}

\n
Moreover, the sum in the right-hand side tends to $0$ with $\ell$, by uniform continuity of $V$ and (\ref{5.39}). Hence, by (\ref{5.39}), we see that $\mu_\ell$ ($\in K$, by (\ref{5.28})) converges in $M_+(B)$ to $(1 + \Phi)^2 m_B$. This shows that
\begin{equation}\label{5.40}
\mu = (1 + \Phi)^2 m_B \in K.
\end{equation}

\medskip\n
Moreover, by (\ref{5.20}) iii), (\ref{5.28}), (\ref{5.29}) ii), we find that
\begin{equation}\label{5.41}
\alpha \Big( 1 + \mbox{\f $\dis\frac{c(B)}{R^{d-2}}$}\Big) \ge \overline{\lim\limits_\ell} \;\cE_{N_\ell} (\ov{\varphi}_\ell,\ov{\varphi}_\ell) = \overline{\lim\limits_\ell} \; \fr \; \dsl^d_{j=1} N^2_\ell \Big\| \;\ov{\varphi}_\ell \Big( \cdot + \mbox{\f $\dis\frac{e_j}{N_\ell}$}\Big) - \ov{\varphi}_\ell(\cdot)\Big\|^2_{L^2(\IL_{N_\ell})}
\end{equation}

\n
and expressing this last quantity in terms of $\Phi_\ell$ and using the Fourier transform
\begin{align*}
&=   \overline{\lim\limits_\ell} \; \fr \; \dsl^d_{j=1}  \mbox{\f $\dis\frac{1}{(2 \pi)^d}$} \;\dis\int_{\IR^d} N^2_\ell \; \Big|e^{i \frac{\xi_j}{N_ \ell}} - 1\Big|^2 \,|\wh{\Phi}_\ell(\xi)|^2 d\xi
\\[1ex]
&\!\! \stackrel{\rm Fatou}{\ge}  \mbox{\f $\dis\frac{1}{2(2 \pi)^d}$} \;\dis\int_{\IR^d} |\xi|^2 \,|\wh{\Phi}(\xi)|^2 d\xi = \fr \; \dis\int_{\IR^d} |\nabla \Phi (z)|^2 dz
\end{align*}

\n
(extracting some subsequence along which $\wh{\Phi}_\ell$ converges a.e.\,to $\wh{\Phi}$, before the last inequality). We have thus shown that $\Phi \in H^1(\IR^d)$ and
\begin{equation}\label{5.42}
\cE (\Phi,\Phi) \le \alpha \Big(1 + \mbox{\f $\dis\frac{c(B)}{R^{d-2}}$}\Big) .
\end{equation}

\n
Combined with (\ref{5.38}), (\ref{5.40}), this implies, see (\ref{3.4}), that
\begin{equation}\label{5.43}
\inf\limits_K \;I \le \alpha \Big(1 +  \mbox{\f $\dis\frac{c(B)}{R^{d-2}}$}\Big), \;\mbox{for any $R > 1$}.
\end{equation}
Letting $R \r \infty$, we obtain Proposition \ref{prop5.6}.
\end{proof}

We now come to the main theorem of this section, see (\ref{0.1}), (\ref{0.3}) for notation.

\begin{theorem}\label{theo5.8} (The large deviation principle, $u > 0$)
\begin{equation}\label{5.44}
\begin{array}{l}
\mbox{As $N \r \infty$, the laws of $\rho_{N,u}$ on $M_+(B)$ satisfy a large deviation principle}
\\
\mbox{at speed $N^{d-2}$, with convex, good rate function $\frac{1}{d} \;I_u$}.
\end{array}
\end{equation}
\end{theorem}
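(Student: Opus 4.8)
The plan is to deduce the full large deviation principle from the lower bound of Theorem~\ref{theo5.4} and the upper bound of Theorem~\ref{theo5.5}, which are only stated for the special convex sets $O=uA$ of (\ref{5.1}), by combining the fact that these form a base for the relative topology of $M_+(B)$ with an exponential tightness estimate; the fact that $\frac1d I_u$ is a convex good rate function is already contained in Lemma~\ref{lem3.1} and (\ref{3.5}). For the lower bound, given an open $U\subseteq M_+(B)$ and $\mu\in U$, one picks a set $O$ as in (\ref{5.1}) with $\mu\in O\subseteq U$; then $\ov{\IP}[\rho_{N,u}\in U]\ge\ov{\IP}[\rho_{N,u}\in O]$, and Theorem~\ref{theo5.4} together with $\inf_O\frac1d I_u\le\frac1d I_u(\mu)$ gives $\liminf_N\frac1{N^{d-2}}\log\ov{\IP}[\rho_{N,u}\in U]\ge-\frac1d I_u(\mu)$; taking the supremum over $\mu\in U$ yields the large deviation lower bound for all open sets.

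Next I would establish exponential tightness. Since $B$ is compact, the sets $\{\mu\in M_+(B);\ \mu(B)\le M\}$, $M>0$, are weakly compact (as in the proof of (\ref{3.2})), so it suffices to show that $\limsup_N\frac1{N^{d-2}}\log\ov{\IP}[\rho_{N,u}(B)>M]\to-\infty$ as $M\to\infty$. Writing $\rho_{N,u}=\frac ut\,\wt{\cL}_{N,t}$ with $t=\frac ud N^{d-2}$ as in the proof of Theorem~\ref{theo5.4}, using the stationary independent increments of $\alpha\mapsto\cL_{N,\alpha}$ (which give $\ov{\IE}[e^{\langle\cL_{N,t},V\rangle}]=e^{t\Lambda_N(V)}$), and applying Chebyshev's inequality with a small constant $\lambda>0$, one gets $\ov{\IP}[\rho_{N,u}(B)>M]\le\exp\{-t(\frac{\lambda M}{u}-\Lambda_N(\lambda 1_{B_N}))\}$, hence $\limsup_N\frac1{N^{d-2}}\log\ov{\IP}[\rho_{N,u}(B)>M]\le-\frac{\lambda M}{d}+\frac ud\limsup_N\Lambda_N(\lambda 1_{B_N})$. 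The crucial point is that $\sup_N\Lambda_N(\lambda 1_{B_N})<\infty$ for $\lambda$ small: by Theorem~\ref{theo4.1} this quantity equals $\sup_N\Gamma_N(\lambda 1_{B_N})$, and one controls the discrete variational expression $\Gamma_N(\lambda 1_{B_N})$ of (\ref{4.8}) uniformly in $N$ by comparison with its continuous counterpart $\Gamma(\lambda 1_B)=\Lambda(\lambda 1_B)$, which is finite as soon as $\lambda\|G1_B\|_\infty<1$ (see (\ref{1.26}), (\ref{2.2})); alternatively one can bound $\Lambda_N(\lambda 1_{B_N})$ directly from the Poisson representation of the occupation times of $(NB)\cap\IZ^d$. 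Letting $M\to\infty$ then yields exponential tightness.

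Then I would prove the large deviation upper bound on compact sets. Let $C\subseteq M_+(B)$ be compact and fix $0<a<\inf_C\frac1d I_u$ (there is nothing to do when $\inf_C\frac1d I_u=0$). For each $\mu\in C$, the lower semicontinuity of $I_u$ (Lemma~\ref{lem3.1}) allows one to choose a set $O_\mu$ of type (\ref{5.1}) with $\mu\in O_\mu$ whose closure in $M_+(B)$ is contained in a slightly enlarged set of type (\ref{5.1}) on which $\frac1d I_u>a$ --- this uses that replacing $\delta$ by a larger $\delta'$ in (\ref{5.1}) turns the closure of one such set into a subset of another --- so that $\inf_{\ov{O_\mu}}\frac1d I_u\ge a$. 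Extracting a finite subcover $O_{\mu_1},\dots,O_{\mu_n}$ of $C$ and using $\ov{\IP}[\rho_{N,u}\in C]\le\sum_{i=1}^n\ov{\IP}[\rho_{N,u}\in O_{\mu_i}]$, Theorem~\ref{theo5.5} gives $\limsup_N\frac1{N^{d-2}}\log\ov{\IP}[\rho_{N,u}\in C]\le\max_i\big(-\inf_{\ov{O_{\mu_i}}}\frac1d I_u\big)\le-a$, and letting $a\uparrow\inf_C\frac1d I_u$ finishes this step.

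Finally, with $\frac1d I_u$ a convex good rate function (Lemma~\ref{lem3.1}, (\ref{3.5})), the lower bound for open sets, the upper bound for compact sets, and exponential tightness combine --- through the standard upgrade of a weak large deviation principle to a full one under exponential tightness, e.g.~Lemma~1.2.18 of \cite{DembZeit98} --- to give the large deviation principle (\ref{5.44}). I expect the main obstacle to be the exponential tightness, and more precisely the uniform bound $\sup_N\Lambda_N(\lambda 1_{B_N})<\infty$, which requires a genuine comparison between the discrete variational problem (\ref{4.8}) on $\IL_N$ and the continuous one (\ref{2.4}); the remainder is routine topological bookkeeping, the only delicate point being the passage through closures in the compact upper bound.
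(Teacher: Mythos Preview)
Your proposal follows essentially the same route as the paper's own proof: the paper also invokes Theorems~\ref{theo5.4} and~\ref{theo5.5} together with Theorem~4.1.11 of \cite{DembZeit98} (which packages precisely the base-of-neighborhoods argument you spell out) to obtain a weak large deviation principle with rate function $\frac{1}{d}I_u$, and then upgrades it to a full one via exponential tightness and Chebyshev. The only substantive difference concerns the bound $\limsup_N\Lambda_N(\lambda 1_{B_N})<\infty$ for small $\lambda$: the paper does not attempt the discrete--continuous comparison you sketch, but instead cites (3.3) and (3.11) of \cite{Szni13a}, which directly give the convergence of $\frac{1}{d}\log\ov{\IE}[e^{\langle\cL_{N,u},d\lambda 1_B\rangle}]$ to a finite limit $u\,c(B,\lambda)$; your proposed comparison of $\Gamma_N(\lambda 1_{B_N})$ with $\Gamma(\lambda 1_B)$ would work in principle but is not as immediate as you suggest, and the Poisson-representation route amounts to redoing the computation behind that cited result.
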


\begin{proof}
Combining Theorems \ref{theo5.4} and \ref{theo5.5}, it follows from Theorem 4.1.11, p.~120 of \cite{DembZeit98}, that $\rho_{N,u}$ satisfies a weak large deviation principle at speed $N^{d-2}$, with rate function $\frac{1}{d} \,I_u$ (which is good and convex by (\ref{3.2}), (\ref{3.5})).

\medskip
In addition, one has exponential tightness for the laws of the $\rho_{N,u}$, due to the fact that (see (\ref{3.3}) and (\ref{3.11}) of \cite{Szni13a}) for small $\lambda > 0$,
\begin{align*}
\overline{\lim\limits_N} & \;\mbox{\f $\dis\frac{1}{N^{d-2}}$} \;\log \ov{\IE} \big[ \exp\big
\{N^{d-2} \langle \rho_{N,u}, \lambda 1_B\rangle\big\}\big]  \underset{(\ref{4.7})}{\stackrel{(\ref{4.6})}{=}}
\\ 
\overline{\lim\limits_N} & \; \mbox{\f $\dis\frac{1}{d}$} \;\log \ov{\IE} [e^{\langle \cL_{N,u},d\lambda 1_B\rangle}] = u\,c(B,\lambda) < \infty,
\end{align*}

\n
and the Chebyshev Inequality. The claim (\ref{5.44}) now follows (see~p.~8 of \cite{DembZeit98}).
\end{proof}

\section{An application}
\setcounter{equation}{0}

In this section, we apply the large deviation principle proved in the last section, see Theorem \ref{theo5.8}, and control the probability of existence of ``high local density'' regions insulating a given compact subset of $\IR^d$. Our main results appear in Theorems \ref{theo6.2} and \ref{theo6.4}. Extensions are discussed in Remark \ref{rem6.5}. We begin with some definitions and preliminary remarks.

\medskip
We consider a compact set $K$ and a closed box $B_0$ in $\IR^d$ such that
\begin{equation}\label{6.1}
\phi \not= K \subset B_0 .
\end{equation}

\medskip\n
Given $a \in \IR$ and a continuous function $f$ on $\IR^d$, we say that $\{f \ge a\}$ {\it disconnects $K$ from $\partial B_0$}, if
\begin{equation}\label{6.2}
\begin{array}{l}
\mbox{for any continuous function $\psi$: $[0,1] \r B_0$, such that $\psi(0) \in K$ and}
\\
\mbox{$\psi(1) \in \partial B_0$, one has $\sup\limits_{0 \le t \le 1} f\big(\psi(t)\big) \ge a$.}
\end{array}
\end{equation}

\n
Note that the collection of bounded continuous functions $f$ for which $\{f \ge a\}$ disconnects $K$ from $\partial B_0$ is closed for the sup-norm topology.

\medskip
Further, we consider $\delta \in (0,1)$, and a closed box $B$, so that
\begin{equation}\label{6.3}
K \subset B_0 \subset B, \; \mbox{and $d(\partial B_0, \partial B) > \delta$},
\end{equation}

\n
as well as a continuous probability density $\varphi_\delta$ (with respect to Lebesgue measure), which is supported in $B(0,\delta)$. Denoting by $C_0(\IR^d)$ the set of continuous functions on $\IR^d$ that tend to $0$ at infinity (endowed with the sup-norm topology), we consider the regularization map $r_\delta$ from $M_+ (B)$ into $C_0(\IR^d)$
\begin{equation}\label{6.4}
\mu \in M_+(B) \r r_\delta(\mu)(\cdot) = \dis\int_B \varphi_\delta (\cdot - y) \,\mu(dy) \in C_0(\IR^d),
\end{equation}
and introduce for $a \ge 0$, $\delta \in (0,1)$, the subset of $M_+(B)$
\begin{equation}\label{6.5}
\cD_{a,\delta} = \{ \mu \in M_+(B); \;\{r_\delta(\mu) \ge a\} \;\mbox{disconnects $K$ from $\partial B_0\}$}.
\end{equation}

\n
The next lemma collects some useful properties of the above objects.

\begin{lemma}\label{lem6.1} $(a \ge 0$, $0 < \delta < 1$, $u > 0)$
\begin{align}
& \mbox{$r_\delta$ is continuous}. \label{6.6}
\\[1ex]
&\mbox{$\cD_{a,\delta}$ is a closed subset of $M_+(B)$}.\label{6.7}
\\[1ex]
&\mbox{The event $\{\rho_{N,u} \in \cD_{a,\delta}\}$ does not depend on the choice of $B$ (satisfying (\ref{6.3}))}. \label{6.8}
\end{align}
\end{lemma}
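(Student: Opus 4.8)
The plan is to prove the three assertions of Lemma \ref{lem6.1} in order, as each is short and the later ones lean on the earlier.

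For \eqref{6.6}, recall that $M_+(B)$ carries the weak topology generated by $C(B)$, and $C_0(\IR^d)$ the sup-norm topology. First I would fix $\mu_n \to \mu$ in $M_+(B)$. For each fixed $x \in \IR^d$, the function $y \mapsto \varphi_\delta(x-y)$ lies in $C(B)$, so $r_\delta(\mu_n)(x) \to r_\delta(\mu)(x)$ pointwise. To upgrade to uniform convergence I would note that weak convergence on $M_+(B)$ with $B$ compact forces $\sup_n \mu_n(B) < \infty$ (testing against $1_B \in C(B)$), so the family $\{r_\delta(\mu_n)\}$ is uniformly bounded; moreover $\varphi_\delta$ is uniformly continuous on $\IR^d$ (continuous with compact support), whence $\{r_\delta(\mu_n)\}$ is equicontinuous, and all these functions are supported in the fixed compact set $B^\delta$ (the $\delta$-neighborhood of $B$). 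By Arzelà--Ascoli, pointwise convergence together with equicontinuity and uniform support gives $r_\delta(\mu_n) \to r_\delta(\mu)$ uniformly, i.e.\ in $C_0(\IR^d)$. This proves \eqref{6.6}. One should also check $r_\delta(\mu)\in C_0(\IR^d)$ in the first place, which is immediate since it is continuous and supported in $B^\delta$.

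For \eqref{6.7}, recall the remark below \eqref{6.2}: the set $\cC_a$ of bounded continuous $f$ for which $\{f\ge a\}$ disconnects $K$ from $\partial B_0$ is closed in the sup-norm topology (this is because if $f_n\to f$ uniformly and $\psi$ is an admissible path with $\sup_t f(\psi(t)) < a$, then by uniform convergence $\sup_t f_n(\psi(t)) < a$ for large $n$, contradicting $f_n\in\cC_a$). Now $\cD_{a,\delta} = r_\delta^{-1}(\cC_a)$, and since $r_\delta$ is continuous by \eqref{6.6} and $\cC_a\cap C_0(\IR^d)$ is closed in $C_0(\IR^d)$, the preimage $\cD_{a,\delta}$ is closed in $M_+(B)$.

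For \eqref{6.8}, suppose $B$ and $B'$ both satisfy \eqref{6.3}, and consider first the case $B\subseteq B'$. Write $\rho_{N,u}^{B}$ and $\rho_{N,u}^{B'}$ for the corresponding measures from \eqref{0.1}; then $\rho_{N,u}^{B}$ is the restriction of $\rho_{N,u}^{B'}$ to Borel subsets of $B$. For any $\mu\in M_+(B')$ with restriction $\mu|_B$, and any $x$ with $d(x,\partial B_0)\le\delta$ — in particular for any $x$ in a neighborhood of $K\cup\partial B_0$ — the ball $B(x,\delta)$ meets $B'\setminus B$ only outside $B_0^{\,\delta'}$ for a suitable margin; more carefully, since $d(\partial B_0,\partial B)>\delta$, for every $x\in B_0$ the ball $B(x,\delta)$ is contained in the interior of $B$, hence $r_\delta(\mu)(x) = r_\delta(\mu|_B)(x)$ for all $x\in B_0$. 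Since disconnection of $K$ from $\partial B_0$ in \eqref{6.2} only tests paths $\psi$ valued in $B_0$, the event $\{r_\delta(\mu)\ge a$ disconnects$\}$ depends only on the values of $r_\delta(\mu)$ on $B_0$, hence only on $\mu|_B$. Applying this with $\mu=\rho_{N,u}^{B'}$ gives $\{\rho_{N,u}^{B'}\in\cD_{a,\delta}^{B'}\} = \{\rho_{N,u}^{B}\in\cD_{a,\delta}^{B}\}$. For general $B, B'$, pass through a common larger box $B''\supseteq B\cup B'$ still satisfying \eqref{6.3} (enlarge one of them), and apply the inclusion case twice. This proves \eqref{6.8}.

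\textbf{Main obstacle.} The only genuinely delicate point is the uniform (as opposed to pointwise) convergence in \eqref{6.6}: one must combine the uniform mass bound coming from weak convergence against $1_B$ with the uniform continuity of $\varphi_\delta$ and the common compact support to invoke Arzelà--Ascoli. Everything else is bookkeeping with the definition of disconnection and the fact that it only probes $B_0$, together with the restriction compatibility already used in \eqref{3.3}.
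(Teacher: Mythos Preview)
Your proof is correct and follows essentially the same approach as the paper. For \eqref{6.6} the paper invokes a theorem of Parthasarathy (equicontinuity of the kernels $\varphi_\delta(z-\cdot)$ on $B$ as $z$ varies, plus weak convergence, gives uniform convergence of the integrals) in place of your direct Arzel\`a--Ascoli argument, but the underlying mechanism is the same; for \eqref{6.7} and \eqref{6.8} your argument is the paper's, only spelled out in slightly more detail (in particular the paper simply states that $r_\delta(\rho_{N,u})$ restricted to $B_0$ is independent of $B$, without the reduction through a common larger box).
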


\begin{proof}
We start with the proof of (\ref{6.6}). The functions $\varphi_\delta(z - \cdot)$ on $B$, as $z$ varies in $\IR^d$, are equicontinuous at each point of $B$ and uniformly bounded. It now follows from Theorem 6.8, p.~51 of \cite{Part67}, that when $\mu_n$ converges weakly to $\mu$ in $M_+(B)$, $r_\delta(\mu_n)$ converges uniformly to $r_\delta(\mu)$. This proves (\ref{6.6}). Then, (\ref{6.7}) is an immediate consequence of (\ref{6.6}) and the observation below (\ref{6.2}). As for (\ref{6.8}), it suffices to notice that the restriction of $r_\delta(\rho_{N,u})$ to $B_0$ does not depend on the choice of $B$ when (\ref{6.3}) holds.
\end{proof}

By the observation below (\ref{4.4}) and (\ref{6.6}), we know that for $u > 0$, $\ov{\IP}$-a.s., $r_\delta(\rho_{N,u})$ converges uniformly to $r_\delta(um_B) (\cdot) = u \int_B \varphi_\delta(\cdot - y)dy$, and this function equals $u$ on $B_0$ by (\ref{6.3}). We will now consider the case where $a > u$ and study the large $N$ behavior of the probability of occurence of high values of $r_\delta(\rho_{N,u})$ insulating $K$ from $\partial B_0$.

\begin{theorem}\label{theo6.2} (Insulation upper bound, $a > u > 0$)
\begin{equation}\label{6.9}
\overline{\lim\limits_N} \; \mbox{\f $\dis\frac{1}{N^{d-2}}$} \; \log \ov{\IP} [\rho_{N,u} \in \cD_{a,\delta}] \le - \mbox{\f $\dis\frac{1}{d}$}  \;\big(\sqrt{a} - \sqrt{u}\big)^2 {\rm cap}(K)
\end{equation}
\end{theorem}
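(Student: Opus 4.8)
The plan is to apply the large deviation principle of Theorem~\ref{theo5.8} to the closed set $\cD_{a,\delta}$. By (\ref{6.7}) and the upper bound in (\ref{0.4}) i), it suffices to show that
\begin{equation*}
\inf\limits_{\mu \in \cD_{a,\delta}} \,\mbox{\f $\dis\frac{1}{d}$} \, I_u(\mu) \ge \mbox{\f $\dis\frac{1}{d}$} \, \big(\sqrt{a} - \sqrt{u}\big)^2 \,{\rm cap}(K),
\end{equation*}
i.e. that $I_u(\mu) \ge (\sqrt{a}-\sqrt{u})^2 \,{\rm cap}(K)$ for every $\mu \in \cD_{a,\delta}$. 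By (\ref{3.4}) and (\ref{3.5}) we may assume $\mu = g\, m_B$ with $g \ge 0$, and then $I_u(\mu) = \fr \int_{\IR^d} |\nabla \varphi|^2 dy$ over $\varphi \in H^1(\IR^d)$ with $\varphi = \sqrt{g} - \sqrt{u}$ a.e.\ on $B$; pick a near-minimizer $\varphi$. Since $r_\delta$ is a convolution with the probability density $\varphi_\delta$ supported in $B(0,\delta)$, the condition $\mu \in \cD_{a,\delta}$ says that $\{r_\delta(\mu) \ge a\}$ disconnects $K$ from $\partial B_0$.

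The key geometric step is to produce, from the disconnecting level set, a function that is $\ge \sqrt{a}-\sqrt{u}$ near $K$ and decays at infinity, whose Dirichlet energy is controlled by $\fr\int|\nabla\varphi|^2$. Concretely, set $\psi = (\sqrt{g} - \sqrt{u})_+ = \varphi_+$ on $B$ (extended by the $H^1$ function $\varphi_+$, whose gradient energy is no larger). The plan is to argue that $\{r_\delta(\mu) \ge a\}$ disconnecting $K$ from $\partial B_0$ forces the (suitably mollified, or directly the) function $\psi$ to satisfy a global connectivity constraint: on every continuous path in $B_0$ from $K$ to $\partial B_0$ there is a point $z$ with $r_\delta(\mu)(z) \ge a$, hence with $\int \varphi_\delta(z-y) g(y)\,dy \ge a$; by Cauchy--Schwarz in $L^2(\varphi_\delta(z-\cdot)dy)$ one gets $\int \varphi_\delta(z-y)(\sqrt{g}(y))^2 dy \ge a$ and then, comparing with $\int \varphi_\delta(z-y)\,u\,dy = u$, a quantitative lower bound of the form $\int \varphi_\delta(z-y)\,\psi(y)^2\,dy \ge (\sqrt{a}-\sqrt{u})^2$ for such $z$ --- so $\psi$ cannot be too small on $B(z,\delta)$. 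Thus the open set $U = \{y : \psi(y) > 0\}$, enlarged by $\delta$, contains a ``wall'' separating $K$ from $\partial B_0$; equivalently $\varphi_+/(\sqrt a-\sqrt u)$ majorizes a function that is $1$ on a set whose removal disconnects $K$ from $\partial B_0$. One then invokes the variational characterization of capacity: the Dirichlet energy of any function in $\cF_e$ that equals (or exceeds) $1$ on such a separating set, and decays at infinity, is at least ${\rm cap}(K)$ --- this is the standard fact that capacity is monotone and that a separating surface ``screens'' $K$, so ${\rm cap}(K) \le {\rm cap}(\text{separating set})\le \fr\int|\nabla(\text{that function})|^2$. Plugging in $\varphi_+/(\sqrt a-\sqrt u)$ gives $\fr\int|\nabla\varphi_+|^2 \ge (\sqrt a-\sqrt u)^2\,{\rm cap}(K)$, and since $\fr\int|\nabla\varphi|^2 \ge \fr\int|\nabla\varphi_+|^2$ we are done.

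I expect the main obstacle to be the rigorous passage from ``$\{r_\delta(\mu)\ge a\}$ disconnects $K$ from $\partial B_0$'' to a clean statement about $\psi$ itself that allows the capacity comparison. The mollification by $\varphi_\delta$ blurs things: $r_\delta(\mu)(z)\ge a$ is an averaged condition, so one does not literally get $\psi \ge \sqrt a - \sqrt u$ on a disconnecting set, only that $\psi$ has substantial $L^2(\varphi_\delta)$-mass near each such $z$. The fix is to pass to a slightly enlarged set and a mollified version of $\varphi_+$: show that $\varphi_+ * \varphi_\delta$ (or an $H^1$ function dominating it) is $\ge \sqrt a - \sqrt u$ on a set disconnecting $K$ from $\partial B_0$ while its Dirichlet energy is still at most $\fr\int|\nabla\varphi_+|^2$ (convolution does not increase the Dirichlet energy). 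This should give the upper bound with ${\rm cap}(K)$ exactly, the loss being absorbed because $\delta < d(\partial B_0,\partial B)$ keeps everything inside $B$ and because one is free to take the infimum over all $\mu \in \cD_{a,\delta}$. The remaining details --- lower semicontinuity, the reduction to absolutely continuous $\mu$, and the fact (Remark~\ref{rem6.5}~1)) that ${\rm cap}(K^\delta)\downarrow{\rm cap}(K)$ which shows the bound is sharp --- are routine given Lemma~\ref{lem3.1}.
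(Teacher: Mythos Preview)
Your overall architecture is right and matches the paper: apply the upper bound of Theorem~\ref{theo5.8} to the closed set $\cD_{a,\delta}$, then show $I_u(\mu)\ge(\sqrt a-\sqrt u)^2\,{\rm cap}(K)$ for every $\mu\in\cD_{a,\delta}$. The capacity comparison at the end (a disconnecting level set has capacity $\ge{\rm cap}(K)$, and any $\cF_e$-function exceeding a constant on it has Dirichlet energy at least that constant squared times its capacity) is exactly what the paper uses.

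The gap is in your ``fix''. From $r_\delta(\mu)(z)=\int g\,\varphi_\delta(z-\cdot)\ge a$ you cannot conclude that $(\varphi_+*\varphi_\delta)(z)\ge\sqrt a-\sqrt u$. Take $g$ equal to $a/\epsilon$ on a set of $\varphi_\delta(z-\cdot)dy$-mass $\epsilon$ and $0$ elsewhere: then $r_\delta(\mu)(z)=a$, but $(\varphi_+*\varphi_\delta)(z)=\epsilon(\sqrt{a/\epsilon}-\sqrt u)_+=\sqrt{\epsilon a}-\epsilon\sqrt u\to 0$ as $\epsilon\to 0$. So the mollified $\varphi_+$ need not clear the threshold. (Your intermediate inequality $\int\psi^2\,\varphi_\delta(z-\cdot)\ge(\sqrt a-\sqrt u)^2$ is true, by Jensen for the convex map $t\mapsto(\sqrt t-\sqrt u)_+^2$, not by Cauchy--Schwarz; but it gives an $L^2$-average bound, not a bound on $\psi*\varphi_\delta$.) What would work is $h=\sqrt{\varphi_+^2*\varphi_\delta}$: then $h\ge\sqrt a-\sqrt u$ on the disconnecting set by the Jensen step, and the pointwise Cauchy--Schwarz bound $|\nabla\sqrt{f^2*\varphi_\delta}|^2\le|\nabla f|^2*\varphi_\delta$ gives $\cE(h,h)\le\cE(\varphi_+,\varphi_+)\le\cE(\varphi,\varphi)$.

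The paper avoids this detour by a convexity argument at the level of the rate function itself (Lemma~\ref{lem6.3}): since $h\in L^1_+(B_0)\mapsto I_{u,B_0}(h\,m_{B_0})$ is convex and lower semicontinuous (cf.~(\ref{2.29}), (\ref{2.32})), and $r_\delta(\mu)\,1_{B_0}$ is an average of translates $1_{B_0}(\cdot)h(\cdot-y)$ with $|y|\le\delta$, one gets directly
\[
I_{u,B_0}\big(r_\delta(\mu)\,m_{B_0}\big)\ \le\ \sup_{|y|\le\delta} I_{u,B_0-y}\big(1_{B_0-y}h\,m_{B_0-y}\big)\ \stackrel{(\ref{3.3})}{\le}\ I_{u,B}(\mu).
\]
Now $r_\delta(\mu)$ is continuous, so $\sqrt{r_\delta(\mu)}-\sqrt u$ is genuinely $\ge\sqrt a-\sqrt u$ on the disconnecting set, and its harmonic extension off $B_0$ (which realizes $I_{u,B_0}(r_\delta(\mu)\,m_{B_0})$ via (\ref{2.27})) furnishes the test function for the capacity bound. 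This is cleaner than building a mollified test function by hand, and it is the step you should replace your ``fix'' with.
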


\begin{proof}
By (\ref{6.7}) and Theorem \ref{theo5.8} we have
\begin{equation}\label{6.10}
\overline{\lim\limits_N} \; \mbox{\f $\dis\frac{1}{N^{d-2}}$} \; \log \ov{\IP} [\rho_{N,u} \in \cD_{a,\delta}] \le  - \inf\limits_{\cD_{a,\delta}} \;  \mbox{\f $\dis\frac{1}{d}$}  \;I_u.
\end{equation}

\n
We will use the notation $I_{u,B_0}$ or $I_{u,B}$ to highlight the dependence on the closed box at hand in the definition (\ref{0.3}). We have the following control.
\begin{lemma}\label{lem6.3}
\begin{equation}\label{6.11}
I_{u,B_0} \big(r_\delta(\mu) m_{B_0}\big) \le I_{u,B}(\mu), \; \mbox{for any $\mu \in M_+(B)$}
\end{equation}

\n
(we view $r_\delta(\mu) \,m_{B_0}$ as an element of $M_+(B_0)$).
\end{lemma}

\begin{proof}
Without loss of generality, we can assume that $I_{u,B}(\mu) < \infty$, so that $\mu = hm_B$ with $h \in L^1_+(m_B)$. Note that $y \in \IR^d \r 1_{B_0}(\cdot) \,h(\cdot - y) \in L^1(m_{B_0})$ is a continuous map (we extend $h$ outside $B$ as being equal to $0$). Moreover, by (\ref{6.4}) 
\begin{equation*}
\big(1_{B_0} r_\delta(\mu)\big) (\cdot) = \dis\int 1_{B_0}(\cdot) h(\cdot - y) \,\varphi_\delta(y)dy.
\end{equation*}

\medskip
We also know that $f \in L^1_+(m_{B_0}) \r I_{u,B_0}(fm_{B_0})$ is a convex, lower semi-continuous map, see (\ref{2.32}), (\ref{3.1}), (\ref{3.5}). Hence, we have
\begin{equation}\label{6.12}
\begin{split}
I_{u,B_0} (r_\delta(\mu) m_{B_0})  \le&\; \sup\limits_{|y| \le \delta} \;I_{u,B_0} (1_{B_0} (\cdot) \,h(\cdot -y) m_{B_0})
\\[1ex]
\underset{(\ref{3.5})}{\stackrel{(\ref{3.1})}{=}} &\; \sup\limits_{|y| \le \delta}  \;I_{u,B_0 -y} (1_{B_0-y}\, hm_{B_0-y})
\\[1ex]
\underset{(\ref{6.3})}{\stackrel{(\ref{3.3})}{\le}} &\;I_{u,B} (h m_B) = I_{u,B}(\mu).
\end{split}
\end{equation}
This proves (\ref{6.11}).
\end{proof}

We will now bound the right-hand side of (\ref{6.10}). Given $\mu \in \cD_{a,\delta}$, we define
\begin{equation}\label{6.13}
\varphi(z) = E_z \big[\big(\sqrt{r_\delta(\mu)} - \sqrt{u}\big)(X_{H_{B_0}}), \;H_{B_0} < \infty], \; z \in \IR^d,
\end{equation}

\n
so that $\varphi \in C_0(\IR^d)$. If $I_{u,B_0}(r_\delta(\mu) m_{B_0}) < \infty$, then by (\ref{3.1}), (\ref{2.32}), and the explanation below (\ref{2.27}), we have $\varphi \in \cF_e$ and
\begin{equation}\label{6.14}
\cE(\varphi,\varphi) = \wt{\cE}_{B_0} \big(\sqrt{r_\delta(\mu)} - \sqrt{u}, \sqrt{r_\delta(\mu)}  - \sqrt{u}\big) \underset{(\ref{3.5}),(\ref{2.32})}{\stackrel{(\ref{3.1})}{=}} I_{u,B_0} (r_\delta(\mu) m_{B_0}).
\end{equation}

\medskip\n
In addition, $\varphi = \sqrt{r_\delta(\mu)} - \sqrt{u}$ on $B_0$, and since $\mu \in \cD_{a,\delta}$, we see that $\{\varphi \ge \sqrt{a} - \sqrt{u}\}$ disconnects $K$ from $\partial B_0$. Setting $\wt{K} = \{y \in B_0$,  $\varphi(y) \ge \sqrt{a} - \sqrt{u}\}$, it follows from Theorem 1.10, p.~58 of \cite{PortSton78}, that ${\rm cap}(\wt{K}) \ge {\rm cap}(K)$, and from p.~71 of \cite{FukuOshiTake94}, that $\cE(\varphi,\varphi)  \ge \big(\sqrt{a} - \sqrt{u}\big)^2 {\rm cap} (\wt{K})$. Hence, for $\mu \in \cD_{a,\delta}$, we have
\begin{equation}\label{6.15}
I_u(\mu) \stackrel{(\ref{6.11}), (\ref{6.14})}{\ge} \cE (\varphi,\varphi)  \ge \big(\sqrt{a} - \sqrt{u}\big)^2 {\rm cap} (\wt{K}) \ge \big(\sqrt{a} - \sqrt{u}\big)^2 {\rm cap}(K).
\end{equation}

\n
Inserting this bound in the right-hand side of (\ref{6.10}), we obtain (\ref{6.9}).
\end{proof}

We now complement the asymptotic upper bound from Theorem \ref{theo6.2} with an asymptotic lower bound. We denote by $K^\delta = \{z \in \IR^d$; $d(z,K) \le \delta\}$ the closed $\delta$-neighborhood of $K$ for the Euclidean distance. One knows that ${\rm cap}(K^\delta) \downarrow {\rm cap}(K)$ as $\delta \r 0$, see Remark \ref{rem6.5} 1) below.

\begin{theorem}\label{theo6.4} (Insulation lower bound, $a > u > 0$)
\begin{equation}\label{6.16}
\underset{N}{\underline{\lim}} \; \mbox{\f $\dis\frac{1}{N^{d-2}}$} \;\log \ov{\IP} [\rho_{N,u} \in \cD_{a,\delta}] \ge -  \mbox{\f $\dis\frac{1}{d}$} \; \big(\sqrt{a} - \sqrt{u}\big)^2 {\rm cap}  (K^\delta).
\end{equation}
\end{theorem}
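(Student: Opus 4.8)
The plan is to establish the lower bound (\ref{6.16}) as a direct consequence of the large deviation lower bound in Theorem \ref{theo5.8}, by producing an explicit measure $\mu_* \in M_+(B)$ which lies in the \emph{interior} of $\cD_{a,\delta}$ and for which $\frac{1}{d} I_u(\mu_*)$ is at most $\frac{1}{d}(\sqrt{a}-\sqrt{u})^2 \mathrm{cap}(K^\delta)$. Indeed, since $\cD_{a,\delta}$ is a (closed) subset of $M_+(B)$, the lower bound in the large deviation principle gives
\begin{equation*}
\underset{N}{\underline{\lim}}\; \mbox{\f $\dis\frac{1}{N^{d-2}}$}\log \ov{\IP}[\rho_{N,u} \in \cD_{a,\delta}] \ge - \inf\limits_{\mu \in \mathrm{int}(\cD_{a,\delta})} \mbox{\f $\dis\frac{1}{d}$}\, I_u(\mu),
\end{equation*}
so it suffices to exhibit an interior point with the desired value of the rate function. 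The natural candidate comes from the equilibrium potential of $K^\delta$: let $h_{K^\delta}(z) = P_z[H_{K^\delta} < \infty]$ be the equilibrium potential of $K^\delta$ (so $0 \le h_{K^\delta}\le 1$, $h_{K^\delta} = 1$ on $K^\delta$, it tends to $0$ at infinity, and $\cE(h_{K^\delta}, h_{K^\delta}) = \mathrm{cap}(K^\delta)$, see \cite{FukuOshiTake94}, \cite{Szni98a}), and set
\begin{equation}\label{6.17}
\varphi_* = \big(\sqrt{a} - \sqrt{u}\big)\, h_{K^\delta}, \qquad \mu_* = \big(\sqrt{u} + \varphi_*\big)^2 m_B = \big(\sqrt{u} + (\sqrt{a}-\sqrt{u}) h_{K^\delta}\big)^2 m_B.
\end{equation}
Since $h_{K^\delta} \in \cF_e \cap H^1_{\mathrm{loc}}$ (and one may, if necessary, first replace $K^\delta$ by a slightly smaller compact neighborhood of $K$ whose potential is in $H^1(\IR^d)$, letting it grow back to $K^\delta$ at the end, or appeal to a standard truncation to land genuinely in $H^1(\IR^d)$), the function $\varphi_*$ lies in the class appearing in (\ref{3.4}), and $\sqrt{d\mu_*/dm_B} - \sqrt{u} = \varphi_*$ a.e.\ on $B$; hence $I_u(\mu_*) \le \mbox{\f $\dis\frac{1}{2}$}\int_{\IR^d} |\nabla \varphi_*|^2 dy = \big(\sqrt{a}-\sqrt{u}\big)^2 \cE(h_{K^\delta}, h_{K^\delta}) = \big(\sqrt{a}-\sqrt{u}\big)^2 \mathrm{cap}(K^\delta)$.

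Next I must check that $\mu_*$ is an interior point of $\cD_{a,\delta}$. Here one uses that on $K^\delta$ one has $h_{K^\delta} = 1$, so $\sqrt{d\mu_*/dm_B} = \sqrt{a}$ and hence the density of $\mu_*$ equals $a$ (Lebesgue-a.e.) on $K^\delta$; consequently $r_\delta(\mu_*)(z) = \int_B \varphi_\delta(z-y)\, \frac{d\mu_*}{dm_B}(y)\, dy$ is $\ge a$ on $K$, since for $z \in K$ the ball $B(z,\delta)$ over which $\varphi_\delta(z-\cdot)$ is supported is contained in $K^\delta \subseteq B$, where the density is $\ge a$ (in fact equal to $a$). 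Thus every path in $B_0$ from $K$ to $\partial B_0$ must start at a point of $K$ where $r_\delta(\mu_*) \ge a$, so $\{r_\delta(\mu_*) \ge a\}$ certainly disconnects $K$ from $\partial B_0$ and $\mu_* \in \cD_{a,\delta}$. To get \emph{interiority}, I would perturb: for small $\eta > 0$ set $\mu_*^\eta = \big(\sqrt{u} + (\sqrt{a}+\eta-\sqrt{u}) h_{K^\delta}\big)^2 m_B$, so that the density of $\mu_*^\eta$ equals $(\sqrt{a}+\eta)^2 > a$ on $K^\delta$, whence $r_\delta(\mu_*^\eta) \ge (\sqrt{a}+\eta)^2$ on $K$; then there is a sup-norm neighborhood of $r_\delta(\mu_*^\eta)$ all of whose members are $\ge a$ on $K$ and hence disconnect $K$ from $\partial B_0$, and by continuity of $r_\delta$ (Lemma \ref{lem6.1}, (\ref{6.6})) the preimage is a weak neighborhood of $\mu_*^\eta$ contained in $\cD_{a,\delta}$. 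Since $I_u(\mu_*^\eta) \le (\sqrt{a}+\eta-\sqrt{u})^2 \mathrm{cap}(K^\delta) \to (\sqrt{a}-\sqrt{u})^2 \mathrm{cap}(K^\delta)$ as $\eta \downarrow 0$, letting $\eta \downarrow 0$ after applying the large deviation lower bound yields (\ref{6.16}).

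The one genuinely delicate point — the main obstacle — is the regularity of the test function: the rate function formula (\ref{3.4}) requires $\varphi \in H^1(\IR^d)$, whereas the equilibrium potential $h_{K^\delta}$ of a general compact set is only guaranteed to lie in the extended Dirichlet space $\cF_e$ (it need not be in $L^2(\IR^d)$, and for very irregular $K^\delta$ need not even have finite Dirichlet energy matched by an $H^1$ representative in the naive sense). I would handle this by a two-step limiting procedure: first replace $K^\delta$ by $K^{\delta'}$ with $\delta' < \delta$ slightly smaller but still $\ge$ some $\delta''$, note ${\rm cap}(K^{\delta'}) \to {\rm cap}(K^\delta)$ is not needed — rather I only need ${\rm cap}(K^{\delta'}) \le {\rm cap}(K^\delta)$ would go the wrong way, so instead I truncate the potential itself: set $\varphi_*^{(M)} = \varphi_* \wedge$ (a smooth cutoff at level $(\sqrt a - \sqrt u)$ outside a large ball) to obtain a genuine $H^1(\IR^d)$ function agreeing with $\varphi_*$ on the relevant box $B$ and with Dirichlet energy $\le \cE(\varphi_*,\varphi_*) + o(1)$ as the ball grows (using that $h_{K^\delta}$ tends to $0$ at infinity and has the exact decay $|z|^{-(d-2)}$, so the tail contribution to the energy is controlled, exactly as in the cutoff arguments of \cite{FukuOshiTake94}); since the truncation only changes $\varphi_*$ far from $B$, the resulting measure restricted to $B$ is unchanged, and one still has $\mu_*^\eta \in M_+(B)$ with $I_u(\mu_*^\eta) \le (\sqrt a + \eta - \sqrt u)^2 {\rm cap}(K^\delta) + o(1)$. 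Alternatively, and more cleanly, one invokes directly the characterization recalled below (\ref{2.27}): $\widetilde\cE_{B_0}(\sqrt{r_\delta(\mu)} - \sqrt u, \cdot)$ is finite precisely when the trace is the restriction of a quasi-continuous $\cF_e$-function, and $I_u$ on $M_+(B)$ is controlled by this trace form composed with extension; working with $\cF_e$ throughout (as the excerpt itself does in (\ref{6.14})) sidesteps the $H^1$ issue entirely, and then no truncation is needed — one simply takes $\mu_*^\eta$ as above, observes $\sqrt{d\mu_*^\eta/dm_B}-\sqrt u = (\sqrt a + \eta - \sqrt u) h_{K^\delta}$ restricted to $B$ is the trace of an $\cF_e$ function with energy $(\sqrt a + \eta - \sqrt u)^2 {\rm cap}(K^\delta)$, so $I_u(\mu_*^\eta) \le (\sqrt a + \eta - \sqrt u)^2 {\rm cap}(K^\delta)$, and concludes as before.
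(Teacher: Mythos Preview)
Your proposal is correct and follows essentially the same approach as the paper: construct the perturbed measure $\mu_*^\eta = (\sqrt{u} + (\sqrt{a}+\eta - \sqrt{u})h_{K^\delta})^2 m_B$ (the paper writes the equivalent $\mu_\ve$ with $\sqrt{a+\ve}$ in place of $\sqrt{a}+\eta$), use continuity of $r_\delta$ to show it lies in the interior of $\cD_{a,\delta}$, apply the LDP lower bound, evaluate $I_u$ via the $\cF_e$/trace-form characterization below (\ref{2.27}), and let the perturbation parameter tend to zero. Your regularity discussion wanders through several unnecessary alternatives (the truncation and the $K^{\delta'}$ detour are not needed), but the $\cF_e$ route you settle on at the end is exactly what the paper invokes in a single line via (\ref{3.4}), (\ref{3.5}), and the remark below (\ref{2.27}).
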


\begin{proof}
We consider $\ve > 0$ and define
\begin{equation}\label{6.17}
\mu_\ve = \big(\sqrt{u} +\big(\sqrt{a + \ve} - \sqrt{u}\big)\,h\big)^2\,m_B,
\end{equation}

\n
where $h(y) = P_y[\wt{H}_{K^\delta} < \infty] = P_y[H_{K^\delta} < \infty]$, for $y \in \IR^d$, is the equilibrium potential of $K^\delta$ (every point of $K^\delta$ is regular for $K^\delta$ so $h \in C_0(\IR^d)$ and $h = 1$ on $K^\delta$). Observe that
\begin{equation}\label{6.18}
\begin{split}
r_\delta(\mu_\ve) (y)  \stackrel{(\ref{6.4})}{=} &\; \dis\int_B  \big(\sqrt{u} +\big(\sqrt{a + \ve} - \sqrt{u}\big) \,h(z)\big)^2 \varphi_\delta (y-z)dz
\\[1ex]
 \ge&\; a + \ve \; \mbox{, when} \; y \in K
\\
&\; \mbox{(since $h=1$ on $K^\delta$, and this is actually an equality)}.
\end{split}
\end{equation}

By (\ref{6.6}), we can find an open neighborhood $O$ of $\mu_\ve$ in $M_+(B)$ so that for all $\mu \in O$, $r_\delta(\mu) \ge a + \frac{\ve}{2}$ on $K$. It follows that $\{r_\delta(\mu) \ge a\}$ separates $K$ from $\partial B_0$, for all $\mu \in O$, i.e.~$O \subseteq \cD_{a,\delta}$. As a result of Theorem \ref{theo5.8} we find that
\begin{equation}\label{6.19}
\underset{N}{\underline{\lim}} \; \mbox{\f $\dis\frac{1}{N^{d-2}}$} \;\log \ov{\IP} [\rho_{N,u} \in \cD_{a,\delta}] \ge  -\inf\limits_O \;  \mbox{\f $\dis\frac{1}{d}$} \; I_u \ge  - \mbox{\f $\dis\frac{1}{d}$} \; I_u(\mu_\ve).
\end{equation}
Now $h$ belongs to $\cF_e$ and $\cE(h,h) = {\rm cap}(K^\delta)$, see \cite{FukuOshiTake94}, p.~71. Therefore, by (\ref{3.4}), (\ref{3.5}) and below (\ref{2.27}),
\begin{equation}\label{6.20}
I_u(\mu_\ve) = \big(\sqrt{a + \ve} - \sqrt{u}\big)^2 \,\cE(h,h) = \big(\sqrt{a + \ve} - \sqrt{u}\big)^2 \,{\rm cap}(K^\delta).
\end{equation}

\n
Inserting this identity in (\ref{6.19}) and letting $\ve \r 0$, we obtain (\ref{6.16}).
\end{proof}

\begin{remark}\label{rem6.5} \rm ~

\medskip\n
1) One knows from Proposition 1.13, p.~60 of  \cite{PortSton78}, that  ${\rm cap}(K^\delta) \downarrow {\rm cap}(K)$, as $\delta \r 0$. So, when $\delta \r 0$, the constant in the right-hand side of the lower bound (\ref{6.16}) tends to the constant in the right-hand side of the upper bound (\ref{6.9}).

\medskip\n
2) Actually, one can let $\delta$ slowly tend to $0$ in (\ref{6.9}) and (\ref{6.16}). More precisely, given a choice of $\varphi_\delta$ for each $\delta \in (0,1)$, $a > u > 0$ and $K$, $B_0$, $B$ satisfying (\ref{6.3}) when $\delta = \frac{1}{2}$, one can, using a diagonal type procedure, the remark above, and Theorems \ref{theo6.2}, \ref{theo6.4}, to construct a sequence $\delta_N$ slowly tending to zero so that
\begin{equation}\label{6.21}
\lim\limits_N \; \mbox{\f $\dis\frac{1}{N^{d-2}}$}  \;\log \ov{\IP} [\rho_{N,u} \in \cD_{a,\delta_N}] = -  \mbox{\f $\dis\frac{1}{d}$} \big(\sqrt{a} - \sqrt{u}\big)^2 \;{\rm cap}(K).
\end{equation}

\medskip\n
3) We can, in place of $\cD_{a,\delta}$ in (\ref{6.5}), instead consider ``disconnection by sub-level sets'', i.e.
\begin{equation}\label{6.22}
\mbox{$\cD'_{a,\delta} = \{\mu \in M_+(B); \;\{r_\delta(\mu) \le a\}$ disconnects $K$ from $\partial B_0\}$}
\end{equation}

\medskip\n
(replacing $\sup_{0 \le t \le 1} f(\psi(t)) \ge a$ by $\inf_{0 \le t \le 1} f(\psi(t)) \le a$ in (\ref{6.2}) to define the above event).

\medskip
The same arguments of Theorems \ref{theo6.2} and \ref{theo6.4} apply (one replaces $a + \ve$ by $a - \ve$, in (\ref{6.17}), with $0 < \ve < a$) and we obtain that for $0 < a < u$, $0 < \delta < 1$, under (\ref{6.3})
\begin{equation}\label{6.23}
\begin{split}
\overline{\lim\limits_N}  \;\mbox{\f $\dis\frac{1}{N^{d-2}}$} \;\log \ov{\IP} \big[  \rho_{N,u} \in \cD'_{a,\delta}\big]  & \le - \mbox{\f $\dis\frac{1}{d}$} \big(\sqrt{a} - \sqrt{u}\big)^2 \;{\rm cap}(K),
\\[2ex]
\overline{\lim\limits_N}  \;\mbox{\f $\dis\frac{1}{N^{d-2}}$} \;\log \ov{\IP} \big[  \rho_{N,u} \in \cD'_{a,\delta}\big]  & \ge - \mbox{\f $\dis\frac{1}{d}$} \big(\sqrt{a} - \sqrt{u}\big)^2 \;{\rm cap}(K^\delta), 
\end{split}
\end{equation}

\medskip\n
4) It is instructive to compare (\ref{6.16}) in Theorem \ref{theo6.4} with the lower bound one obtains by the following intuitive ``change of measure'' strategy. Namely, assume $0 < u < a$ and introduce the new probability measure 
\begin{equation*}
\wt{\IP}_N = e^{\lambda \eta - u \,{\rm cap}_{\IZ^d}(K^{\delta,N})(e^\lambda - 1)} \,\ov{\IP},
\end{equation*}

\n
where $\eta$ stands for the total number of bilateral trajectories modulo time-shift with label at most $u$, which enter $K^{\delta,N}= (NK^\delta) \cap \IZ^d$, $\lambda = \log (\frac{a + \ve}{u})$, and we recall that for $A$ finite in $\IZ^d$, ${\rm cap}_{\IZ^d}(A)$ stands for the capacity of $A$ (attached to the simple random walk on $\IZ^d$).

\medskip
Under $\wt{\IP}_N$, the variable $\eta$ has Poisson distribution with parameter $(a + \ve) \,{\rm cap}_{\IZ^d}(K^{\delta,N})$ (instead of $u\,{\rm cap}_{\IZ^d}(K^{\delta,N})$ under $\ov{\IP}$), and $(L_{x,u})_{x \in K^{\delta,N}}$ has the same distribution as \linebreak $(L_{x,a +\ve})_{x \in K^{\delta,N}}$ under $\ov{\IP}$. As we now explain,
\begin{equation}\label{6.24}
\lim\limits_N \;\wt{\IP}_N(\rho_{N,u} \in \cD_{a,\delta}) = 1.
\end{equation}

\medskip\n
Indeed, by (\ref{4.4}), $\ov{\IP}$-a.s., $\frac{1}{N^d} \,\sum_{y \in \IL_N} \, L_{N y,a+\ve}\,\delta_y$ converges vaguely to $(a + \ve)dy$, as $N \r \infty$, and the restriction to $K$ of $r_\delta(\rho_{N,u})$ under $\wt{\IP}_N$ has the same distribution as the restriction to $K$ of $r_\delta(\rho_{N,a + \ve})$ under $\ov{\IP}$, which $\ov{\IP}$-a.s. converges uniformly on $K$ to $a + \ve$. However, as soon as the restriction to $K$ of $r_\delta(\rho_{N,u})$ exceeds $a$ everywhere, the event $\{\rho_{N,u} \in \cD_{a,\delta}\}$ occurs, and (\ref{6.24}) follows.

\medskip
By the classical relative entropy estimate, see for instance \cite{DeusStro89}, p.~76, and (\ref{6.24}),
\begin{equation}\label{6.25}
\liminf\limits_N \; \mbox{\f $\dis\frac{1}{N^{d-2}}$} \,\log \ov{\IP} [\rho_{N,u} \in \cD_{a,\delta}] \ge - \limsup\limits_N \; \mbox{\f $\dis\frac{1}{N^{d-2}}$} H(\wt{\IP}_N\, | \,\ov{\IP}),\end{equation}

\n
where $H(\wt{\IP}_N \,|\,\ov{\IP}) = \IE^{\wt{\IP}_N } [\log  \mbox{\f $\dis\frac{d\wt{\IP}_N}{d\ov{\IP}}$}]$ stands for the relative entropy of $\wt{\IP}_N$ with respect to $\ov{\IP}$. As we now explain,
\begin{equation}\label{6.26}
\liminf\limits_N \; \mbox{\f $\dis\frac{1}{N^{d-2}}$} H(\wt{\IP}_N\, | \,\ov{\IP}) \ge \Big(a \log \mbox{\f $\dis\frac{a}{u}$} - a + u\Big) \;\mbox{\f $\dis\frac{1}{d}$}\;{\rm cap}(K^\delta),
\end{equation}
and since (as can be checked directly)
\begin{equation}\label{6.27}
v \log  \mbox{\f $\dis\frac{v}{u}$} - v + u > \big(\sqrt{v} - \sqrt{u}\big)^2, \; \mbox{for any $v,u > 0$, with $v \not= u$},
\end{equation}
the lower bound (\ref{6.25}) is worse than (\ref{6.16}).

\medskip
To prove (\ref{6.26}), one notes that by our choice of $\wt{\IP}_N$,
\begin{equation}\label{6.28}
\begin{split}
H(\wt{\IP}_N\,|\,\ov{\IP}) & = E^{\wt{\IP}_N}[\lambda \eta] - (a + \ve - u) \;{\rm cap}_{\IZ^d}(K^{\delta,N})
\\[1ex]
& = \big((a + \ve) \;\log  \big(  \mbox{\f $\dis\frac{a+ \ve}{u}$} \big) - (a + \ve) + u\big)  \;{\rm cap}_{\IZ^d}(K^{\delta,N}).
\end{split}
\end{equation}

\n
The same argument leading to (\ref{5.37}), (\ref{5.42}) shows that for each $R \ge 1$ and $\delta' \in (0, \delta)$ one can construct $\Phi \in H^1(\IR^d)$ with compact support, which is a.e.\,equal to $1$ on $K^{\delta '}$, and such that $\cE(\Phi,\Phi) \le d(1 + \frac{c(B)}{R^{d-2}}) \,\beta$ with $\beta = \underline{\lim}_N \; \frac{1}{N^{d-2}} \;{\rm cap}_{\IZ^d}(K^{\delta,N})$. It thus follows (see~\cite{FukuOshiTake94}, p.~71) that
\begin{equation*}
\beta \ge  \mbox{\f $\dis\frac{1}{d}$}  \; \Big (1 + \mbox{\f $\dis\frac{c(B)}{R^{d-2}}$}\Big)^{-1}\;{\rm cap}(K^{\delta '}).
\end{equation*}

\medskip\n
Letting $R \r \infty$, $\delta ' \uparrow \delta$, and noting that when $y \in K^\delta$, $P_y$-a.s. Brownian motion immediately hits $\bigcup_{\delta' < \delta} K^{\delta '}$, so that by Theorem 1.10 and Proposition 1.13, p.~58 and 60 of \cite{PortSton78}, ${\rm cap}(K^{\delta '}) \uparrow {\rm cap} (K^\delta)$ as $\delta ' \uparrow \delta$, we obtain that
\begin{equation}\label{6.29}
\liminf\limits_N \; \mbox{\f $\dis\frac{1}{N^{d-2}}$}  \;{\rm cap}_{\IZ^d}(K^{\delta,N}) \ge \mbox{\f $\dis\frac{1}{d}$}  \;{\rm cap}(K^\delta).
\end{equation}

\n
Since $v \r v \log \frac{v}{u} - v + u$ is increasing for $v \in (u,\infty)$, the combination of (\ref{6.28}) and (\ref{6.29}) readily yields (\ref{6.26}).

\medskip
So, the intuitive lower bound we just described does not capture (\ref{6.16}).

\bigskip\n
5) It is an important feature of random interlacements that the vacant set $\cV^u$ of random interlacements at level  $u$ on  $\IZ^d, d \ge 3$, undergoes a phase transition from a percolative regime, when $u < u_*$, to a non-percolative regime, when $u > u_*$, with $u_*$ a certain non-degenerate critical value, which is positive  and finite (see \cite{Szni10a}, \cite{SidoSzni09}, and also \cite{DrewRathSapo12}, \cite{PopoTeix13} for recent developments).  Given a smooth compact subset $K$ of $\IR^d$ and its discrete blow-up $K^N = (NK) \cap \IZ^d$, one can consider the disconnection event $\{K^N \overset{\cV^u}{\longleftrightarrow}\hspace{-3ex}\mbox{\scriptsize  /}\quad  \infty\}$, for which the connected components of $K^N \cap \cV^u$ in $\cV^u$ are finite (possibly empty). Looking at a small interior ball in $K$ and its discrete blow-up, it is straightforward to argue that for $u < u_*$, $\lim_N \ov{\IP} [K^N  \overset{\cV^u}{\longleftrightarrow}\hspace{-3ex}\mbox{\scriptsize  /}\quad  \infty] = 0$. One can wonder whether the main effect in realizing this atypical disconnection event stems from a large deviation of the density profile of occupation-times, for which, roughly speaking, values of the profile exceeding $u_*$ would insulate $K$ from infinity, and whether one has the asymptotics
\begin{equation*}
\lim\limits_N \; \mbox{\f $\dis\frac{1}{N^{d-2}}$} \;\log \ov{\IP} [K^N   \overset{\cV^u}{\longleftrightarrow}\hspace{-3ex}\mbox{\scriptsize  /}\quad  \infty] = - \mbox{\f $\dis\frac{1}{d}$} \;\big(\sqrt{u}_* - \sqrt{u})^2  \,{\rm cap}(K), \;\mbox{for} \; u < u_*.
\end{equation*}

\medskip\n
We refer to \cite{LiSzni13b}, for a lower bound on the left-hand side of a similar nature.

 \hfill $\square$
\end{remark}

\end{document}